\documentclass[a4paper,12pt]{amsart} 

\usepackage[utf8]{inputenc}
\usepackage[T1]{fontenc}	
\usepackage[french,english]{babel}

\usepackage{lmodern}			
\usepackage{newtxtext}

\usepackage[top=3cm, bottom=3cm, left=3.6cm, right=3.6cm]{geometry}

\usepackage{graphicx}	

\usepackage{tikz}
\usetikzlibrary{patterns}

\usepackage[plainpages=false, colorlinks, linkcolor=bleuFonce, citecolor=rougeFonce, urlcolor=vertFonce, breaklinks]{hyperref}
\usepackage{orcidlink}

\usepackage{placeins}
\usepackage{float} 

\usepackage{color}
\definecolor{vertFonce}	{rgb}{0,0.5,0}
\definecolor{numLignes}	{rgb}{0.17,0.57,0.7}	
\definecolor{gris}		{rgb}{0.5,0.5,0.5}
\definecolor{grisFonce}	{rgb}{0.2,0.2,0.2}
\definecolor{orange}	{rgb}{1,0.65,0.31}		
\definecolor{orangeFonce}{rgb}{1,0.4,0}
\definecolor{bleuFonce}	{rgb}{0,0,0.4}
\definecolor{rougeFonce}{rgb}{0.3,0,0}
\definecolor{rougeWord}	{rgb}{0.5,0,0}
\definecolor{vertClair}	{rgb}{0.8,1,0.8}
\definecolor{rougeClair}{rgb}{1,0.5,0.5}
\definecolor{violet}	{rgb}{0.5,0,0.5}

\usepackage{pict2e}
\usepackage{multido}

\usepackage{amsfonts,amssymb,amsthm,amsmath} 
\usepackage[foot]{amsaddr}	
\usepackage{dsfont}				
\usepackage{mathrsfs}
\usepackage{yfonts}
\usepackage{cancel}
 
\theoremstyle{plain}
\newtheorem{thm}{Theorem}[section]
\newtheorem{lem}[thm]{Lemma}

\newtheorem{prop}[thm]{Proposition}

\theoremstyle{definition}

\newtheorem{remark}{Remark}[section]

\newenvironment{system*}{%
	\begin{equation*}\left\{\ \begin{aligned}
}{%
	\end{aligned} \right. \end{equation*}%
}

\newcommand		{\N}		{\mathbb N}			

\newcommand		{\RR}		{\mathbb R}			
\newcommand		{\R}		{\RR}
\newcommand		{\Rd}		{\R^3}
\newcommand		{\Rdd}		{\R^6}
\newcommand		{\hd}		{h^3}
\newcommand		{\CC}		{\mathbb C}			
\newcommand	{\cM}		{\mathcal M}		
\renewcommand	{\L}		{\mathcal L}		
\newcommand		{\cK}		{\mathcal K}		

\newcommand		{\lt}			{\left}				%
\newcommand		{\rt}			{\right}			%
\renewcommand	{\(}			{\lt(}
\renewcommand	{\)}			{\rt)}
\newcommand		{\set}[1]		{\lt\{#1\rt\}}
\newcommand		{\bangle}[1]	{\lt\langle #1\rt\rangle}
\newcommand		{\weight}[1]	{\bangle{#1}}	

\newcommand		{\com}[1]		{\lt[{#1}\rt]}		

\newcommand		{\n}[1]			{\lt\lvert{#1}\rt\rvert}	

\newcommand		{\sabs}[1]		{\lvert{#1}\rvert}

\newcommand		{\nrm}[1]		{\lt\lVert{#1}\rt\rVert}		
\newcommand		{\snrm}[1]		{\lVert #1\rVert}
\newcommand		{\bnrm}[1]		{\big\lVert #1\big\rVert}

\newcommand		{\Nrm}[2]		{\nrm{#1}_{#2}}
\newcommand		{\sNrm}[2]		{\snrm{#1}_{#2}}
\newcommand		{\bNrm}[2]		{\bnrm{#1}_{#2}}

\newcommand		{\indic}	{\mathds{1}}		

\renewcommand		{\d}		{\mathop{}\!\mathrm{d}}		
\newcommand			{\dpt}		{\partial_t}

\newcommand			{\Dx}		{\nabla_x}
\newcommand			{\Dv}		{\nabla_v}

\DeclareMathOperator{\pv}		{pv}
\DeclareMathOperator{\cF}		{\mathcal{F}}		
\DeclareMathOperator{\re}		{Re}				
\DeclareMathOperator{\im}		{Im}				
\DeclareMathOperator{\tr}		{Tr}				

\newcommand		{\F}[1]			{\cF\!\( #1 \)}		
\renewcommand	{\Re}[1]		{\re\!\( #1 \)}		
\renewcommand	{\Im}[1]		{\im\!\( #1 \)}		
\newcommand		{\Tr}[1]		{\tr\!\( #1 \)}		

\newcommand		{\intd}			{\int_{\Rd}}
\newcommand		{\intdd}		{\int_{\Rdd}}

\newcommand		{\init}			{\mathrm{init}}

\newcommand		{\eps}			{\varepsilon}

\newcommand	{\h}			{h}
\usepackage{braket}

\newcommand		{\opgam}		{{\boldsymbol{\gamma}}} 
\newcommand		{\op}			{\opgam}	

\newcommand		{\opmu}			{\boldsymbol{\mu}}	

\newcommand	 {\Wigner}[1]		{f^\textnormal{W}_{#1}}

\newcommand		{\opp}			{\boldsymbol{p}}

\newcommand		{\CK}			{\mathfrak{c}}	
\newcommand		{\Gr}			{G^{\textnormal{r}}}
\newcommand		{\GrE}			{G^{\textnormal{r},E}}
\newcommand		{\Gosc}			{G^{\textnormal{osc}}}
\newcommand		{\GoscE}		{G^{\textnormal{osc},E}}
\newcommand		{\hatG}			{\widehat{G}}
\newcommand		{\hatGr}		{\hatG^{\textnormal{r}}}
\newcommand		{\hatGosc}		{\hatG^{\textnormal{osc}}}
\newcommand		{\hatGrE}		{\hatG^{\textnormal{r},E}}
\newcommand		{\hatGoscE}		{\hatG^{\textnormal{osc},E}}

\title[\textsc{Quantum Landau Damping}]{\Large Quantum Landau Damping with Coulomb Repulsion in the Whole Space}

\author[\textsc{D. Bian}]{\vspace{-0pt}\large\textsc{Dongfen Bian}$^1$}
\address{\vspace{-0pt}$^1$School of Mathematics and Statistics \\ Beijing Institute of Technology, $100081$ Beijing, China}
\email[D.~Bian]{biandongfen@bit.edu.cn}

\author[\textsc{E. Grenier}]{\vspace{-0pt}\large\textsc{Emmanuel Grenier}$^2$}
\address{\vspace{-0pt}$^2$Academy of Mathematics and Systems Science, Chinese Academy of Sciences, Beijing 100190, China}
\email[E.~Grenier]{emmanuelgrenier@amss.ac.cn}

\author[\textsc{L. Lafleche}]{\vspace{-0pt}\large\textsc{Laurent Lafleche}$^3$}
\address{\vspace{-0pt}$^3$Unit\'e de Math\'ematiques Pures et Appliqu\'ees \\ CNRS and \'Ecole Normale Sup\'erieure de Lyon, UMR 5669 \\ 46, all\'ee d'Italie, 69364 Lyon Cedex 07, France}
\email[L.~Lafleche]{laurent.lafleche@ens-lyon.fr}

\author[\textsc{Q.H. Nguyen}]{\vspace{-0pt}\large\textsc{Quoc Hung Nguyen}$^2$}
\email[Q.-H.~Nguyen]{qhnguyen@amss.ac.cn}

\keywords{Hartree equation, Vlasov--Poisson equation, semiclassical limit, Landau damping.}

\begin{document}

\begin{abstract}
	In this paper, we study the large time behavior of solutions to the linearized Hartree equation near a stable equilibrium which is homogeneous in space and investigate the quantum Landau damping.
	This allows us to get time decay of the electric field, uniformly in the Planck constant, as well as the uniform-in-time convergence of the quantum density towards the classical density as the Planck constant converges to $0$ .
\end{abstract}

\begingroup
\def\uppercasenonmath#1{} 
\let\MakeUppercase\relax 
\maketitle
\thispagestyle{empty} 
\endgroup

\bigskip





\section{Introduction}


	The aim of this article is to study the quantum analogue of the Landau damping. We recall that the linear Landau damping refers to the long-time behavior of solutions of the linearized Vlasov--Poisson equation
	\begin{equation}\label{eq:VP1}
		\dpt f + v \cdot \Dx f + E \cdot \Dv \mu = 0 \, ,
	\end{equation}
	with
	\begin{equation}\label{eq:VP2}
		E = - \nabla_x V\,, \qquad - \Delta_x V = \intd f \d v \, ,
	\end{equation}
	where $f(t,x,v)$ is the phase-space distribution of the electrons, $E(t,x)$ the electric field, $V(t,x)$ the electric potential, and $\mu(v)$ some homogeneous equilibrium. We assume that $\mu(v)$ is analytic, radial, and decaying in $\n{v}$. 

	In this case, it is well-known that the electric field $E(t,x)$ given by equations~\eqref{eq:VP1} and~\eqref{eq:VP2} converges to zero as $t \to + \infty$, a phenomenon known as ``Landau damping'', initially described by Lev Landau in his seminal paper~\cite{landau_vibrations_1946}. In particular, if $\widehat{E}(t,k)$ denotes the Fourier transform of the electric field with respect to the $x$ variable, then
	\begin{equation}\label{eq:damping}
		\sabs{\widehat{E}(t,k)} \lesssim e^{- \mathcal{D}(k) t}\, \sabs{\widehat{E}(0,k)}
	\end{equation}
	where $\mathcal{D}(k)$ is the Landau damping rate, which only depends on the equilibrium $\mu$. This damping rate vanishes for $k = 0$ and is of order $\n{k}$ for large $k$. In supremum norm, this gives a polynomial decay
	\begin{equation} \label{eq:damping2}
		\Nrm{E(t,\cdot)}{L^\infty} \lesssim \frac{1}{(1 + t)^{3/2}} \, .
	\end{equation}
	We refer to~\cite{landau_vibrations_1946} and~\cite{schekochihin_lectures_2023} for a detailed presentation from a physical point of view. On the mathematical side, the theory has first been focused to the linear case~\cite{maslov_linear_1986, degond_spectral_1986} and the existence of decaying solutions when the domain is a torus~\cite{caglioti_time_1998, hwang_existence_2009}. The nonlinear problem on the torus was later solved in~\cite{mouhot_landau_2011, bedrossian_landau_2016, grenier_landau_2021, ionescu_nonlinear_2024}. On the whole space, the mathematical analysis of the nonlinear problem is still incomplete. The asymptotic stability of homogeneous equilibria for screened interactions is studied in~\cite{bedrossian_landau_2018, han-kwan_asymptotic_2021}, and the linearized problem with Coulomb interaction is studied in~\cite{han-kwan_linearized_2021, bedrossian_linearized_2022, bian_linear_2024} where one can find a direct proof of inequalities~\eqref{eq:damping} and~\eqref{eq:damping2}.

	The aim of this article is to investigate the quantum counterpart of the Landau damping, namely to study the long-time behavior of the electric field corresponding to solutions of the linearized Hartree equation. The Hartree equation is the quantum analogue of the Vlasov equation, i.e. it is the mean-field equation describing the evolution of quantum particles. For a family of wave functions $\psi_j(t,x)$ such that $(\psi_j(t,\cdot))_{j\geq 0}$ form an orthonormal family in $L^2(\Rd)$, it reads
	\begin{equation}\label{eq:Hartree1}
		i\hbar\, \dpt \psi_j = -\,\frac{\hbar^2}{2}\, \Delta_x\psi_j + V_\h\,\psi_j
	\end{equation}
	where
	\begin{equation*}
		\hbar = \frac{h}{2\pi}
	\end{equation*}
	is the reduced Planck constant and the quantum electric potential $V_\h(t,x)$ is given by
	\begin{equation}\label{eq:Hartree2}
		-\Delta_x V_\h = \hd \sum_{j\geq 0} \lambda_j \n{\psi_j}^2
	\end{equation}
	for some summable family of positive numbers $(\lambda_j)_{j\in\N}$. We will in particular prove that inequalities~\eqref{eq:damping} and~\eqref{eq:damping2} remain true uniformly in $\h$ in the quantum case for 
	\begin{equation}\label{eq:Electric_field}
		E_\h := -\nabla_xV_\h
	\end{equation}
	for the linearized version of Equation~\eqref{eq:Hartree1}, and study how the damping rate depends on the Planck constant $h$. We will also prove that, in a sense given below, the solutions of the Hartree equation converge uniformly in time to the solutions of the Vlasov--Poisson equation in the semiclassical limit $h\to 0$. The question of the quantum analogue of the Landau damping was first studied in~\cite{klimontovich_spectra_1960, pines_approach_1962}, and we refer to~\cite{shukla_colloquium_2011} for a more comprehensive review of the physics literature on the subject. On the mathematical side, the uniform-in-$\h$ quantum Landau damping in the whole space was studied in the linear setting in dimension $1$ around a Lorentz equilibrium~\cite{gamba_note_2009} and was studied in the nonlinear case for screened interactions in~\cite{smith_phase_2024}. On the other hand, the limit from the Hartree to the Vlasov equation in the neighborhood of homogeneous equilibria was studied for smooth interactions in~\cite{lewin_hartree_2020}, and recently for screened interactions in~\cite{smith_semiclassical_2026}.

\subsection{The Hartree equation and the Wigner transform} 

	Before stating our results, we recall the standard quantum formalism, namely the operator form of the Hartree equation and its linearization, the Wigner transform and their links with the Vlasov equation.

\subsubsection{The Hartree equation}

	To make the parallel between the Hartree and the Vlasov equations clearer, it is useful to write the Hartree equation in its equivalent formulation in terms of operators. A density operator is a positive trace class operator $\op$ acting on $L^2(\Rd)$.
	Density operators can be written in the form of integral operators, that is for any $\varphi \in L^2(\Rd)$,
	\vspace{-1pt}\begin{equation*}
		\op\varphi(x) = \intd \op(x,y)\,\varphi(y)\d y
	\end{equation*}
	for some function $(x,y)\mapsto\op(x,y) \in L^2(\Rdd)$, which by abuse of notations is denoted by the same notation as the operator $\op$. By the spectral theorem, the specification of a density operator is equivalent to the specification of a family of orthonormal functions and a summable family of positive numbers, and therefore one can write the Hartree equation~\eqref{eq:Hartree1} as an equation for a time density dependent operator $\op(t)$ of the form
	\begin{equation}\label{eq:Hartree}
		i\hbar\,\dpt\op = \com{H_{\op}, \op}
	\end{equation}
	where $\com{A,B} := AB-BA$ denotes the commutator of two operators $A$ and $B$, and $H_{\op}$ is the Hartree Hamiltonian, given by 
	\vspace{-1pt}\begin{equation*}\vspace{-1pt}
		H_{\op} = \frac{\n{\opp}^2}{2} + V_\h
	\end{equation*} 
	where $\opp = -i\hbar\nabla$ and $\n{\opp}^2 = \opp^*\opp = -\hbar^2\Delta$, and where $V_\h$ is the operator of multiplication by the mean-field potential $V_\h(x)$. In terms of $\op$, $V_\h$ is defined by the formula
	\begin{equation}\label{eq:potential_quantum}
		V_\h(x) = K * \rho_\op(x) = \intd K(x-y)\, \rho_\op(y) \d y \, ,
	\end{equation}
	where $\rho_\op$ denotes the quantum position density, defined by
	\begin{equation*}
		\rho_\op(x) = \hd\, \op(x,x) \, ,
	\end{equation*}
	and $K$ is the Coulomb interaction kernel, defined by
	\begin{equation*}
		K(x) = \frac{\CK\,\pi}{\n{x}} \, ,
	\end{equation*}
	for some constant $\CK>0$, such as $\CK = (2\pi)^{-2}$ in the case of Equation~\eqref{eq:Hartree2}.

\subsubsection{The Wigner transform}

	To draw the link between quantum and classical theories, one can associate to an operator $\op$ a function of the phase space called its Wigner transform by the formula
	\begin{equation*} 
		\Wigner{\op}(x,v) = \intd e^{-i\,y\cdot v/\hbar} \,\op(x+\tfrac{y}{2},x-\tfrac{y}{2})\d y \, ,
	\end{equation*}
	which should be interpreted in general in the sense of distributions. It satisfies
	\begin{equation*}
		\Wigner{\op}(x,v) \d x\d v = \hd \Tr{\op}\,, \qquad \sNrm{\Wigner{\op}}{L^2(\Rdd)}^2 = \hd \Tr{\op^2}
	\end{equation*}
	and 
	\begin{equation*}
		\intd \Wigner{\op}(x,v) \d v = \rho_{\op}(x) \, .
	\end{equation*}
	The Wigner transform is a real-valued function if $\op$ is a self-adjoint operator, however it can have negative values even if $\op$ is a positive operator. If $\op$ satisfies the Hartree equation~\eqref{eq:Hartree}, then its Wigner transform satisfies (see e.g.~\cite{lions_sur_1993})
	\begin{equation*} 
		\dpt \Wigner{\op} + v\cdot\Dx \Wigner{\op} + \cK_\h \underset{v}{*} \Wigner{\op} = 0 \, ,
	\end{equation*}
	where
	\begin{equation}\label{eq:Wigner_Vlasov_Kh}
		\cK_\h(x,v) = 2i\pi \intd e^{-2i\pi y\cdot v}\, \frac{V_{\op}(x+hy/2)-V_{\op}(x-hy/2)}{h} \d y \, .
	\end{equation}
	from which one deduces at least formally that solutions of the Hartree equation converge in the semiclassical limit $\h\to 0$ to solutions of the Vlasov--Poisson equation. On the rigorous side, this is by now well-understood in the case of finite mass solutions, see~\cite{lions_sur_1993, gerard_homogenization_1997, ambrosio_semiclassical_2011} for the case of weak convergence of rather general data, and \cite{lafleche_propagation_2019, lafleche_strong_2023, chong_l2_2023, iacobelli_enhanced_2024} for quantitative results in stronger norms for more regular data. In the case of solutions with infinite mass, the derivation of the Vlasov--Poisson equation from the Hartree equation~\eqref{eq:Hartree} is however not known, although it is known for smooth potentials~\cite{lewin_hartree_2020}.

\subsubsection{Perturbation of an homogeneous equilibrium}

	In this article, we consider the case where $\op$ is a small perturbation of an homogeneous equilibrium
	\begin{equation}\label{eq:quantum_equilibrium}
		\opmu = \mu(\opp) = \mu(-i\hbar\nabla)
	\end{equation}
	for some probability distribution $\mu$ which is radial and decreasing. The operator $\opmu$ is a positive Fourier multiplier with integral kernel 
	\begin{equation*}
		\opmu(x,y) = \frac{1}{\hd} \,\widehat{\mu}\!\lt(\frac{y-x}{h}\rt),
	\end{equation*}
	where we take the following convention for the Fourier transform
	\begin{equation*}
		\widehat{u}(k) = \F{u}\!(k) = \intd e^{-2i\pi x\cdot k} \,u(x) \d x \, .
	\end{equation*}
	In particular, $\opmu$ is not a trace class operator and its Wigner transform is just
	\begin{equation*}
		\Wigner{\opmu}(x,v) = \frac{1}{\hd} \intd e^{-i\,y\cdot v/\hbar} \,\widehat{\mu}(-y/h)\d y = \intd e^{2i\pi\,y\cdot v} \,\widehat{\mu}(y)\d y = \mu(v)
	\end{equation*}
	by the Fourier inversion theorem. As a consequence, $\rho_{\opmu} = \intd \mu \d v = 1$ and $V_{\opmu} = \intd K$ is constant, therefore, if $\op_\eps = \opmu + \eps\,\op$ is a solution of the Hartree equation, then
	\begin{equation*} 
		i\hbar\, \dpt \op = \frac{1}{2}\com{\n{\opp}^2,\op} + \com{V_{\op},\opmu} + \eps \com{V_{\op},\op} .
	\end{equation*}
	Neglecting the term $\eps \com{V_{\op},\op}$, we obtain the linearized Hartree equation
	\begin{equation}\label{eq:Hartree_linear}
		i\hbar\, \dpt \op = \frac{1}{2}\com{\n{\opp}^2,\op} + \com{V_{\op},\opmu} ,
	\end{equation}
	which is the quantum analogue of the linearized Vlasov--Poisson equation~\eqref{eq:VP1}. The associated linearized Wigner equation then reads
	\begin{equation}\label{eq:linear_Wigner}
		\dpt \Wigner{\op} + v\cdot\Dx \Wigner{\op} + \cK_\h \underset{v}{*} \mu = 0 \, ,
	\end{equation}
	where $\cK_\h$ was defined in Equation~\eqref{eq:Wigner_Vlasov_Kh}.

\section{Statement of the main results}
	In this paper, we study the semiclassical limit and the long time behavior of solutions to the linearized Wigner equation~\eqref{eq:linear_Wigner} and in particular the so-called Landau damping. We assume that $\mu$ is analytic, radial, strictly decreasing as a function of $\n{v}$, and that, for every multi-index $\beta$ such that $\n{\beta}\leq 8$,
	\begin{equation*}
		\sabs{\partial_v^\beta\mu(v)} \lesssim \weight{v}^{-8-\n{\beta}} .
	\end{equation*}
	Here $\weight{x} = \sqrt{1+\n{x}^2}$, and $a\lesssim b$ means $a\leq C\,b$ with $C$ independent of $t$ and $h$.\\
	
	Let $F(s)$ be defined for any $s\in\R$ by
	\begin{equation}\label{eq:marginal}
		F(s) := \int_{\R^2}\mu(s,v_2,v_3)\d v_2\d v_3 \, .
	\end{equation}
	We assume that there exists $\eta\in(0,1/2]$ such that $F$ extends
	holomorphically to the conic neighborhood
	\begin{equation}\label{eq:def_conic}
		\mathcal D_\eta
		:=\Set{z\in\CC:\n{\Im z}<\eta\lt(1+\n{\Re z}\rt)},
	\end{equation}
	and satisfies, for every $z\in\mathcal D_\eta$,
	\begin{equation}\label{eq:main-assumption-marginal}
		F(-z)=F(z),
		\qquad F(\overline z)=\overline{F(z)},
		\qquad \n{F(z)}\lesssim\weight{z}^{-6}.
	\end{equation}
	In addition, $F(s)>0$ and $F'(s)<0$ for $s>0$. Thus $F$ is a
	thin-tail acceptable equilibrium, with decay exponent $d=6$, in the
	sense of~\cite[Definition~1.1]{ionescu_stability_2023}.
	Typical examples satisfying these assumptions are
	\begin{equation}\label{eq:examples-admissible-equilibria}
		\mu_{\rm G}(v) = c_{\rm G}\, e^{-\n{v}^2} ,
		\quad
		\mu_{\rm E}(v) = c_{\rm E} \, e^{-\weight{v}} ,
		\quad
		\mu_\beta(v) = c_\beta\weight{v}^{-3-\beta} \text{ with }\beta\geq 5
	\end{equation}
	where the positive constants may be chosen to normalize the mass. Their
	one-dimensional marginals are, up to positive multiplicative constants,
	\begin{equation*}
		F_{\rm G}(s) = e^{-s^2},
		\qquad
		F_{\rm E}(s) = \lt(1+\weight{s}\rt)e^{-\weight{s}} ,
		\quad
		F_\beta(s) = \weight{s}^{-1-\beta},
	\end{equation*}
	For $\beta\geq5$, one has
	$F_\beta(s)\lesssim\weight{s}^{-6}$, while the other two marginals
	have exponential decay. After choosing $\eta>0$ sufficiently small,
	each marginal extends holomorphically to $\mathcal D_\eta$, obeys
	\eqref{eq:main-assumption-marginal}, and is strictly decreasing on
	$(0,\infty)$. 

	To simplify the statements, we assume that $\CK = 1$ and
	\begin{equation*}
		M_0 := \intd \mu(v) \d v = 1\,,
		\qquad
		M_2 := \intd \frac{\n{v}^2}{2} \,\mu(v) \d v = 1 \, 
	\end{equation*}
	which is always possible, up to a rescaling in velocity.
	
	By convention, $h = 0$ refers to the classical case, namely to solutions of the Vlasov--Poisson equation or to Green functions related to it.

	\begin{thm}\label{thm:simplified}
		Let $f_0$ be a solution of Equation~\eqref{eq:VP1} with initial data $f_0^\init \in L^1$ and $\op$ be a solution of Equation~\eqref{eq:Hartree_linear} with initial data $\op^\init$ a density operator with the same initial density $\rho_\h^\init := \rho_{\op^\init} = \rho_0^\init \in L^1$. Then, uniformly in $0 \le h \le 1$, the associated electric field defined in Equation~\eqref{eq:Electric_field} or~\eqref{eq:VP2} satisfies
		\begin{equation}\label{eq:decay_E}
			\Nrm{E_\h(t,\cdot)}{L^\infty(\Rd)} \lesssim \weight{t}^{-3/2}
		\end{equation}
		and uniformly in $0 \le h \le 1$ and $t\in\R_+$, the position density satisfies
		\begin{equation}\label{eq:limit_rho}
			\Nrm{\rho_\h(t,\cdot) - \rho_0(t,\cdot)}{L^\infty} \lesssim h^2.
		\end{equation}
		If $f_0^\init \in L^1_xL^{6/5}_v$ and $\hd \tr\bigl(\sabs{\weight{x}^{3/2}\op^\init}^2+\sabs{\weight{x}^{3/2}\op^\init}^2\bigr)$ is bounded uniformly in $\hbar$, then the Wigner transform $f_\h$ of $\op$ satisfies
		\begin{equation}\label{eq:limit_f}
			\Nrm{f_\h(t,\cdot) - f_0(t,\cdot)}{L^2} \leq 
			\sNrm{f_\h^\init - f_0^\init}{L^2} + C\, h^2 \weight{t} .
		\end{equation}
	\end{thm}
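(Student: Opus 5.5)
The plan is to reduce everything to a closed equation for the density in Fourier variables and then study the resulting resolvent (Green function) uniformly in $h$. Take the Fourier transform in $x$ of the linearized Wigner equation~\eqref{eq:linear_Wigner} and integrate along free transport. This gives a Volterra equation
\begin{equation*}
	\widehat\rho_\h(t,k) + \int_0^t \widehat K(k)\, L_\h(t-s,k)\,\widehat\rho_\h(s,k)\d s = S_\h(t,k) ,
\end{equation*}
where the source $S_\h(t,k)=\intd \widehat{f_\h^\init}(k,v)\, e^{-2i\pi t k\cdot v}\d v$ is the free-transport density. The kernel is a finite difference
\begin{equation*}
	L_\h(t,k) = \frac{\mu\text{-Fourier terms at } \tfrac{h}{2}tk \text{ and } -\tfrac{h}{2}tk}{h},
\end{equation*}
namely $L_\h(t,k)\propto \frac{\sin(\pi h t\n{k}^2)}{h}\,\widehat F(t\n{k})$, using radiality of $\mu$ and the marginal $F$. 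As $h\to0$ it converges to the classical kernel $t\n{k}^2\widehat F(t\n{k})$.

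Solving the equation by Laplace transform in $t$ gives $\widehat\rho_\h = S_\h + G_\h * S_\h$, with resolvent $\widehat{G}_\h = (1+\widehat K\widehat L_\h)^{-1}-1$. The dispersion function is
\begin{equation*}
	D_\h(\lambda,k)=1+\frac{1}{\n{k}^2}\,\frac{F\text{-integral}(\lambda\pm ih\n{k}/2)}{h}.
\end{equation*}
It is the classical one, $1+\n{k}^{-2}\int \frac{F'(s)}{\dots}$, with $F'$ replaced by the discrete derivative $\frac{F(s+h\n{k}/2)-F(s-h\n{k}/2)}{h}$. I would then follow the classical whole-space Coulomb analysis (as in~\cite{han-kwan_linearized_2021, bian_linear_2024}) and split $G_\h=\Gosc+\Gr$. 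The oscillatory part comes from the two Langmuir-type roots $\lambda_\pm(k)$ near $\pm i\tau(k)$ for small $\n{k}$. The remainder comes from contour deformation into the conic region $\mathcal D_\eta$, which is where the holomorphic extension of $F$ and its $\weight{z}^{-6}$ decay are used. The key input is that $F'<0$ on $(0,\infty)$ implies a Penrose-type non-vanishing of $D_\h$ off the roots. This persists for the discrete derivative, since $F(s+a)-F(s-a)<0$ for $s,a>0$, and this is what gives uniformity in $h$.

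From this representation, \eqref{eq:decay_E} follows. For the $\Gr$ part, use $\n{\widehat{\Gr}}\lesssim e^{-c\n{k}t}$-type bounds. For the oscillatory part, $\widehat\Gosc(t,k)\sim e^{\lambda_\pm(k)t}$ with $\Re\lambda_\pm\le0$ and $\lambda_\pm\approx\pm i(1+ c\n{k}^2+ c'h^2\n{k}^4)$. A stationary phase / dispersive estimate in $k$ then gives $t^{-3/2}$ decay of $E=\nabla K*\rho$, with $\n{S_\h(t,k)}\le\Nrm{\rho^\init}{L^1}$ controlled using $\Nrm{\rho^\init}{L^1}$. Here $h$ enters only through a nondegenerate perturbation of the Hessian of the phase, which must be checked uniformly for $h\le1$ (the $k$-growth from $h^2\n{k}^4$ only helps dispersion).

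For \eqref{eq:limit_rho}, take the difference of the two representations. Since both have the same initial density, the sources coincide: for $\mu$-type equilibria $S_\h$ depends only on $\rho^\init$ at the linear level through $f^\init$ moments, and I would assume the statement is arranged so that the source difference is controlled by the initial density. One then bounds $\widehat G_\h-\widehat G_0$ via $D_\h-D_0=O(h^2\n{k}^2)$ from Taylor expansion of the symmetric difference, using evenness of $F$. Multiplying by the decay of $G$ removes the $\n{k}^2$ factor uniformly in time.

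For \eqref{eq:limit_f}, take an $L^2$ energy estimate on the difference of Wigner equations. Free transport is an isometry, so $\frac{\d}{\d t}\Nrm{f_\h-f_0}{L^2}\le \Nrm{\cK_\h*_v\mu-E_0\cdot\Dv\mu}{L^2}$. Split this forcing into $(\cK_\h-\cK_0)*\mu$, which is $O(h^2\nabla^3V)$, and $E_\h-E_0$ terms, and bound both uniformly in $t$ using the density estimates together with the weighted assumptions on $\op^\init$, $f_0^\init$. Integrating in time gives $Ch^2\weight t$.

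The main obstacle is the uniform-in-$h$ analysis of the roots and the contour bounds for $D_\h$, including large $\n{k}$ where $h\n{k}$ is not small. There I would try the approach of treating $F(s+a)-F(s-a)$ with $a=h\n{k}/2$ as a new acceptable profile satisfying the hypotheses of~\cite{ionescu_stability_2023} uniformly in $a$.
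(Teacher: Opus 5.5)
\textbf{Verdict: genuine gap.} Your analysis of the Green functions follows the paper: the Volterra equation, the Laplace transform, the dielectric function with the centered difference of $F$ in place of $F'$, Penrose non-vanishing from $F(s+a)-F(s-a)<0$, and the split into two Langmuir residues plus a remainder obtained by deforming into $\mathcal D_\eta$. The problems are in passing from these bounds to the three estimates.

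\textbf{The energy estimate for \eqref{eq:limit_f}.} Your estimate needs $\Nrm{E_\h(t)-E_0(t)}{L^2}\lesssim h^2$ uniformly in $t$, and you attribute this to ``the density estimates''. That does not work, for two reasons. First, an $L^\infty$ bound on $\rho_\h-\rho_0$ gives no $L^2$ control of $\nabla\Delta^{-1}(\rho_\h-\rho_0)$. Second, and more importantly, ``multiplying by the decay of $G$'' only works for $\Gr_\h$, whose symbol carries $e^{-\sigma\n{k}t}$. For the Langmuir part, $\n{\lambda_{\h,\pm}-\lambda_{0,\pm}}\lesssim h^2\n{k}^4$, while $\re\lambda_{\h,\pm}(k)$ vanishes to very high order as $k\to0$ (exponentially for Gaussian-type $\mu$). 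The phase mismatch $h^2\n{k}^4t$ therefore accumulates essentially undamped. In $L^2$ one only has $\Nrm{\Gosc_{\h,\pm}(t)-\Gosc_{0,\pm}(t)}{L^2}\lesssim\min(1,h^2\weight{t})$.

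\textbf{How the paper controls $E_0-E_\h$.} The paper does not deduce $E_0-E_\h$ from a density bound. It decomposes $E_0-E_\h$ with the electric Green functions.
\begin{itemize}
\item The remainder is controlled by the $L^1$ bound on $\GrE_\h-\GrE_0$ from Lemma~\ref{lem:decay_Gr}, paired with the dispersive bound $\Nrm{\rho^\circ(\tau)}{L^2}\lesssim\weight{\tau}^{-3/2}$. This is where hypotheses on the data beyond $\rho^\init\in L^1$ enter.
\item The oscillatory terms are handled by integrating by parts in $\tau$ in the Duhamel integral. This uses the kernel $G^{\textnormal{osc},E,1}_{\h,\pm}$, whose symbol is $\hatGoscE_{\h,\pm}/(2\pi\lambda_{\h,\pm})$, together with the conservation law $\partial_\tau\rho^\circ=-\Dx\cdot m^\circ$.
\item The boundary term at $\tau=t$ is treated through the cancellation $a_+/\lambda_++a_-/\lambda_-=1+O(\n{k}^2)$.
\end{itemize}
This yields a contribution of size $h^2t$. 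So the time growth is already present in $\Nrm{E_0-E_\h}{L^2}$; it does not come from integrating a uniformly $O(h^2)$ forcing.

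For the density, the uniform-in-time bound $\Nrm{\Gosc_{\h,\pm}(t)-\Gosc_{0,\pm}(t)}{L^\infty}\lesssim h^2$ behind \eqref{eq:limit_rho} comes from stationary phase: the $t^{-3/2}$ dispersion beats the drift. It does not come from damping. Your sketch has neither mechanism.

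\textbf{The source step is false,} although you correctly sensed a problem there. The source $S_\h(t,k)=\intd\widehat{f_\h^\init}(k,v)\,e^{-2i\pi tk\cdot v}\d v$ does not involve $\mu$ at all, and it depends on the whole initial Wigner function. Equal initial densities therefore make the sources agree only at $t=0$. For generic data, $\rho^\circ_\h(t)-\rho^\circ_0(t)=\intd(f_\h^\init-f_0^\init)(x-tv,v)\d v$ is of order one. This difference must be carried along and estimated through $f_\h^\init-f_0^\init$, or the two initial phase-space data must be assumed equal. The paper at least keeps the term $(\GrE_\h+\GoscE_{\h,+}+\GoscE_{\h,-})\star_{t,x}(\rho^\circ-\rho^\circ_\h)$ in its decomposition. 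With only equal initial densities assumed, \eqref{eq:limit_rho} cannot hold in general.

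\textbf{Decay in \eqref{eq:decay_E}.} The time-uniform bound $\n{S_\h(t,k)}\le\Nrm{\rho^\init}{L^1}$ cannot produce decay. The free part $\nabla K*\rho^\circ(t)$ needs the dispersion of free transport, which requires conditions on $f^\init$ in $v$. Moreover, convolving a kernel decaying like $\weight{t}^{-3/2}$ with a merely bounded source gives $\int_0^t\weight{t-s}^{-3/2}\d s=O(1)$. The decay needs the finer classical argument, which the paper imports from~\cite{bedrossian_linearized_2022}.
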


	In particular, this shows that, in this case. the semiclassical approximation is valid for times of order $T = o(\h^{-2})$, much longer than the usual Ehrenfest time $T_\textnormal{E} \sim \ln(1/\h)$. 
	
	The proof of this theorem relies on a detailed study of the Green functions of the linearized Wigner equations and of the linearized Vlasov--Poisson equation. The theorem is consequence of the Lemmas~\ref{lem:decay_Gosc} and~\ref{lem:decay_Gr} detailed below. We first describe the Green function of the linearized Wigner equation.

	\begin{thm}\label{thm:Green_functions}
		Let $h\in[0,1]$, $\op$ be a solution of Equation~\eqref{eq:Hartree_linear} with initial data $\op^{\init}$ a density operator. Let $f_h^\circ(t,x,v)$ be the solution of the free transport equation with initial data $f_h^\init := \Wigner{\op^\init}$ the Wigner transform of $\op^\init$, namely
		\begin{equation*}
			f_h^\circ(t,x,v) = f_h^\init(x - tv,v) \, .
		\end{equation*}
		Let $\rho_\h := \rho_\op$ and $\rho_\h^\circ$ be the corresponding densities
		\begin{equation*}
			\rho_\h(t,x) = \intd \Wigner{\op}(t,x,v) \d v\, , \qquad \rho_\h^\circ(t,x) = \intd f_h^\circ(t,x,v) \d v \, .
		\end{equation*}
		Then, one can decompose $\rho_\h$ in the following way
		\begin{equation*} 
			\rho_\h = \Gosc_{\h,+} \star_{t,x} \rho_\h^\circ + \Gosc_{\h,-} \star_{t,x} \rho_\h^\circ + \Gr_\h \star_{t,x} \rho_\h^\circ ,
		\end{equation*}
		where $\Gosc_{\h,\pm}$, the oscillatory Green functions, are defined by their Fourier transforms
		\begin{equation*}
			\hatGosc_{\h,\pm}(t,k) = a_{\h,\pm}(k)\, e^{\lambda_{\h,\pm}(k) \,t}
		\end{equation*}
		with, $a_{\h,\pm}(k) = 0$ if $\n{k} \ge 1$, $\lambda_{\h,-}(k) = \bar \lambda_{\h,+}(k)$, $a_{\h,-}(k) = \bar a_{\h,+}(k)$, and uniformly in $\h$, as $k \to 0$,
		\begin{align*}
			a_{\h,\pm}(k) &= \pm \frac{i}{2} + O(\n{k}^2) \, ,
			\\
			\im \lambda_{\h,\pm}(k) &= \pm 1 \mp \n{k}^2 + O(\n{k}^4) \, ,
			\\
			\re \lambda_{\h,\pm}(k) &\sim - \frac{2 \pi}{\n{k}^2} \int_{\mathbb{R}^2} \partial_{v_1} \mu\!\lt( \n{k}^{-1},v_2,v_3 \rt) \d v_2 \d v_3 \, ,
		\end{align*}
		and where, uniformly in $\h$,
		\begin{equation*} 
			\n{\hatGr_\h(t,k)} \lesssim {|k|^3 \over \langle k \rangle^4} e^{- \sigma \n{k} t} 
		\end{equation*}
		for some $\sigma > 0$.
	\end{thm}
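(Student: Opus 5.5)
Our approach is to reduce the linearized Wigner equation to a scalar Volterra equation for the density, solve it in Laplace--Fourier variables, and split the resolvent into the contribution of the two zeros of the dispersion function near $k=0$ (the plasma oscillations) plus a regular remainder.

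\emph{Volterra equation and dispersion function.} Using Duhamel's formula along free transport in~\eqref{eq:linear_Wigner}, we write $f_\h = f_\h^\circ - \int_0^t (\cK_\h *_v \mu)(s, x-(t-s)v, v)\d s$ and integrate in $v$. In Fourier variables, $\widehat{\cK_\h *_v\mu}(t,k,v)$ equals $\widehat V_\h(t,k)$ times the finite difference $\frac{2i\pi}{h}\bigl(\mu(v+\tfrac{hk}{2})-\mu(v-\tfrac{hk}{2})\bigr)$, and $\widehat V_\h = \CK\,\widehat\rho_\h/|k|^2$. We therefore obtain
\begin{equation*}
	\widehat\rho_\h(t,k) = \widehat{\rho}_\h^\circ(t,k) + \int_0^t K_\h(t-s,k)\,\widehat\rho_\h(s,k)\d s ,
\end{equation*}
where $K_\h(t,k)$ is explicit in terms of $\widehat\mu$. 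By rotation invariance it reduces to the marginal $F$ of~\eqref{eq:marginal}: $K_\h(t,k) \propto \frac{1}{h|k|}\bigl(\widehat F(t|k| - \tfrac h2 |k|) - \widehat F(t|k|+\tfrac h2|k|)\bigr)$-type expressions (Fourier transform in one variable), which tend to $-t\,\widehat{F}(t|k|)$ when $h\to0$. Taking the Laplace transform in $t$ gives $\widetilde\rho_\h = \widetilde{\rho}_\h^\circ/D_\h(\lambda,k)$ with $D_\h = 1-\widetilde K_\h$. Hence $\hatG_\h(\cdot,k)$ is the inverse Laplace transform of $1/D_\h - 1$, and we can write $D_\h(\lambda,k) = 1 + \frac{1}{|k|^2}\,\mathcal H_\h(\lambda/(i|k|))$, where $\mathcal H_\h$ is a Cauchy-type integral of the symmetric difference quotient of $F$ at scale $h|k|$.

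\emph{Analysis of $D_\h$.} Using the holomorphy of $F$ on the cone $\mathcal D_\eta$, we continue $\mathcal H_\h$ analytically across the imaginary axis into a region $\re\lambda > -\sigma|k|$, uniformly in $h\in[0,1]$. The difference quotient $\frac{F(z+h|k|/2)-F(z-h|k|/2)}{h|k|}$ stays holomorphic and enjoys bounds uniform in $h$. We then show:
\begin{itemize}
	\item for $|k|$ small, $D_\h$ has exactly two zeros $\lambda_{\h,\pm}(k)$, obtained by expanding $\mathcal H_\h$ at large argument. The moment normalizations $M_0=M_2=1$ give $\lambda=\pm i(1 - |k|^2 + O(|k|^4))$; $h$ enters only at order $h^2|k|^2$ times lower moments, which is absorbed in the error terms. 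The imaginary part of the Plemelj term yields the stated $\re\lambda_{\h,\pm}$.
	\item no other zeros exist in $\re\lambda\ge-\sigma|k|$. This Penrose-type stability follows from $F'(s)<0$ for $s>0$, uniformly in $h$, because the finite difference of a decreasing even function keeps the sign of the derivative.
\end{itemize}

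\emph{Splitting the Green function.} Take a cutoff $\chi(k)$ supported in $|k|<1$. We set $a_{\h,\pm} = \chi/\partial_\lambda D_\h(\lambda_{\h,\pm})$, i.e.\ the residues, which gives $\pm i/2 + O(|k|^2)$. We define $\Gosc_{\h,\pm}$ through these residues, and $\Gr_\h$ as the remainder, obtained by shifting the Bromwich contour to $\re\lambda = -\sigma|k|$ (for $|k|\ge1$ there are no zeros at all). On the shifted contour, $|1/D_\h - 1|$ has size $|\mathcal H_\h|/|k|^2$ divided by $|D_\h|$. Integrating in $\lambda$ and using the decay $\weight{z}^{-6}$ to control the tails yields a bound $\lesssim e^{-\sigma|k|t}$ times the size of the symbol. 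Small $k$ gives the factor $|k|^3$, since $1/D_\h - 1 \approx -\mathcal H_\h/|k|^2$ vanishes to the right order once the poles are removed; large $k$ gives $|k|^{-1}$, hence the factor $|k|^3/\weight{k}^4$. The conjugation symmetries follow from $F(\bar z) = \overline{F(z)}$ and the reality of $\rho$.

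\emph{Main obstacle.} The hardest step is the uniform-in-$h$ bounds on $D_\h$ along the shifted contour in the intermediate regime where $h|k|$ is of order one or larger. There the quantum difference quotient is not close to $F'$, so both the absence of zeros and the lower bound $|D_\h|\gtrsim 1$ must be proved directly from the monotonicity of $F$ and the conic analyticity. We would first try to show that $\im \mathcal H_\h$ has a definite sign on the real line away from $0$, and then use a Rouché argument.
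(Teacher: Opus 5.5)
\textbf{There is a genuine gap: the $\n{k}^3$ bound on $\hatGr_\h$ for small $k$ does not follow from your argument.}

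Your reduction to a scalar dispersion function and your Penrose-type sign argument match the paper. The sign argument is that a centered difference of an even function decreasing on $(0,\infty)$ has a definite sign on the real line. Your expansion locating $\lambda_{\h,\pm}$ and your treatment of large $\n{k}$ also match.

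The gap is in the step that carries the theorem, namely $\n{\hatGr_\h(t,k)}\lesssim \n{k}^3e^{-\sigma\n{k}t}$ for small $k$. You move the Bromwich contour to the line $\re\lambda=-\sigma\n{k}$ and integrate $\n{1/D_\h-1}$ along it. This cannot give $\n{k}^3$, nor even $o(1)$, for three reasons.
\begin{itemize}
\item On the central part $\n{\lambda}\lesssim\n{k}$, one has $\n{D_\h}\gtrsim\n{k}^{-2}$, from \eqref{eq:bound_convexity_mu_2} and $\widehat K(k)=\CK\n{k}^{-2}$. So $1/D_\h-1=-1+O(\n{k}^2)$, not $\approx-\mathcal H_\h/\n{k}^2$ as you write. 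This piece alone contributes about $\n{k}$.
\item For $\n{\lambda}\gg\n{k}$, one has $1/D_\h-1\approx-\CK M_0/(\lambda^2+\CK M_0)$. Its absolute integral over $\n{\lambda}\geq2$ alone is of order one.
\item Since $\re\lambda_{\h,\pm}(k)=O(\n{k}^5)$ is negligible compared with $\n{k}$, your line passes at distance about $\sigma\n{k}$ from the poles. There $\n{1/D_\h}\sim\n{k}^{-1}$, which costs an extra $\log(1/\n{k})$.
\end{itemize}
The true size $\n{k}^3$ comes from a global cancellation between the central segment, the tails and the residues. Extracting the residues while shifting the contour does not expose it. This is exactly the warning at the start of the paper's proof of Lemma~\ref{lem:estim_hatGr}.

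\textbf{The missing idea is the rational surgery of \cite{bedrossian_linearized_2022}.} One adds to the integrand
\begin{equation*}
P_{\h,r}(\lambda)=\frac{\CK M_0}{\lambda^2+\CK M_0}-\frac{b_\h(r)}{(\lambda^2+\CK M_0)^2},\qquad b_\h(r)=3\CK M_{2,\parallel}\,r^2+\tfrac{\CK M_0}{4}\,h^2r^4 .
\end{equation*}
Its only poles $\pm i\sqrt{\CK M_0}$ lie to the right of the contour, and it is $O(\n{\lambda}^{-2})$, so $\int_\Gamma e^{2\pi\lambda t}P_{\h,r}\d\lambda=0$ for $t>0$. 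The new integrand $Q_{\h,r}=1/\epsilon_{\mu,\h}-1+P_{\h,r}$ then satisfies two bounds.
\begin{itemize}
\item On $\Gamma_1$, $\n{Q_{\h,r}}\lesssim r^2$, because there $P_{\h,r}=1+O(r^2)$ cancels the $-1$.
\item On the rays, $\n{Q_{\h,r}}\lesssim r^4\n{\lambda}^{-2}$. This comes from the explicit numerator $N_{\h,r}$: once $1-\epsilon_{\mu,\h}$ is expanded to order $\lambda^{-6}$, only the terms $r^4\zeta_{\h,r}$ and $b_\h^2$ survive.
\end{itemize}
These integrate to $r\cdot r^2+r^4/(Rr)\sim r^3$.

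The ray estimate needs the contour to stay a fixed distance from $\pm i\sqrt{\CK M_0}$. On your vertical line, which passes within $O(r)$ of these points, the same estimate degrades to $O(r^2)$. This is why the paper uses the broken contour $\Gamma_2\cup\Gamma_1\cup\Gamma_3$ with rays at angle $\beta$, and it is where the \emph{conic} analyticity of $F$ is needed.

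If you want to keep a vertical line, you can instead subtract the exact singular parts of $1/\epsilon_{\mu,\h}$ at $\lambda_{\h,\pm}$, which also integrate to zero. You would then bound the holomorphic remainder near $\pm i\sqrt{\CK M_0}$ on discs of fixed radius, which the cone provides. Either way, some explicit subtraction is indispensable, with residues and pole locations expanded to order $\n{k}^2$.

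Finally, the uniformity in $h$ that you flag as the main obstacle is reduced in the paper to properties of $F'$, via the representation $D_{hr}F(z)=\frac12\int_{-1}^1F'(z+hrs/2)\d s$. The step that genuinely requires a new idea is the cancellation above.
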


	\begin{remark}
		The free transport density is the density associated to the free Schrödinger equation, that is
		\begin{equation*}
			\rho_\h^\circ(t,x) = \hd \bigl(e^{\frac{it \hbar}{2}\Delta}\, \op^\init e^{\frac{-it \hbar}{2}\Delta}\bigr)(x,x) \, .
		\end{equation*}
		In particular, if $\hd \, \op^\init$ is the $L^2$ projection on a normalized wave function $\psi \in L^2(\Rd)$, then $\rho_\h^\circ(t,x) = \sabs{e^{\frac{it \hbar}{2}\Delta}\psi(x)}^2$.
	\end{remark}

	We now compare the Green function of the linearized Wigner equation with the Green function of the linearized Vlasov--Poisson equation.

	\begin{thm}\label{thm:Green_functions_limit}
		Let $f_0(t,x,v)$ be the solution of the linearized Vlasov equation with initial data $f_0^{\init}$. Let $\rho_0(t,x)$ be the corresponding density. Let $f_0^\circ$ and $\rho_0^\circ$ be defined as in Theorem~\ref{thm:Green_functions}. Then 
		\begin{equation*} 
			\rho_0 = \Gosc_{0,+} \star_{t,x} \rho_0^\circ + \Gosc_{0,-} \star_{t,x} \rho_0^\circ + \Gr_0 \star_{t,x} \rho_0^\circ\, ,
		\end{equation*}
		where $\Gosc_{0,\pm}$ and $\Gr_0$ have the same structure as $\Gosc_{\h,\pm}$ and $\Gr_\h$ and satisfy the same bounds.
		Moreover,
		\begin{equation*}
			\n{\lambda_{\h,\pm}(k) - \lambda_{0,\pm}(k)} \lesssim \n{k}^4 h^2,
		\end{equation*}
		\begin{equation*}
			\n{a_{\h,\pm} - a_{0,\pm}} \lesssim \n{k}^4 h^2,
		\end{equation*}
		and 
		\begin{equation*}
			\n{\hatGr_\h(t,k) - \hatGr_0(t,k)} \lesssim \lt( {|k|^3 \over \langle k \rangle^4} \rt) e^{- \sigma \n{k} t} \min(1,h|k|)^2 .
		\end{equation*}
	\end{thm}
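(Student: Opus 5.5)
Both Green functions come out of one Fourier–Laplace computation, and we compare the two dispersion functions. Taking the Fourier transform in $x$ of the linearized Wigner equation~\eqref{eq:linear_Wigner} and integrating along free transport gives a Volterra equation
\begin{equation*}
	\widehat{\rho}_\h(t,k) = \widehat{\rho}_\h^\circ(t,k) - \int_0^t \widehat{K}(k)\,\widehat{\rho}_\h(s,k)\, \frac{\widehat{\mu}\bigl(\tfrac{h k}{2}(t-s)\bigr)-\widehat{\mu}\bigl(-\tfrac{h k}{2}(t-s)\bigr)}{h}\,\d s \, ,
\end{equation*}
where $\widehat{K}(k)\propto \n{k}^{-2}$. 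The kernel is the finite difference of $\widehat{\mu}$ at $k(t-s)$ with step $h$; at $h=0$ it becomes the derivative term of Vlasov--Poisson. By radiality we may take $k=\n{k}e_1$, so only the marginal $F$ enters. The Laplace transform then gives $\widetilde{\rho}_\h = \widetilde{\rho}_\h^\circ / D_\h(\lambda,k)$, and hence $\hatG_\h = \mathcal{L}^{-1}\bigl(1/D_\h - 1\bigr)$. The dispersion function is
\begin{equation*}
	D_\h(\lambda,k) = 1 + \frac{1}{\n{k}^2}\int_{\R} \frac{F(s+\tfrac{h\n{k}}{2}) - F(s-\tfrac{h\n{k}}{2})}{h\n{k}}\, \frac{\n{k}}{\lambda + i\n{k}s}\,\d s \, ,
\end{equation*}
and $D_0$ is obtained by replacing the difference quotient with $F'(s)$.

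The key observation concerns the difference quotient $Q_h F(s) := (F(s+\tfrac{h\n{k}}{2}) - F(s-\tfrac{h\n{k}}{2}))/(h\n{k})$. It is an even-order centered difference of $F'$, so
\begin{equation*}
	Q_hF - F' = \frac{(h\n{k})^2}{24}F''' + \dots
\end{equation*}
This expansion can be made exact with a contour-integral remainder, because $F$ is holomorphic in $\mathcal D_\eta$. For $h\n{k}\lesssim 1$ the remainder is $O((h\n{k})^2)$ in the weighted norms relevant to the analytic continuation of $D$. For $h\n{k}\gtrsim 1$ we only use that $Q_hF$ is uniformly bounded in the same weighted class, which gives the factor $\min(1,h\n{k})^2$. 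I would then rerun, uniformly in $h$, the classical analysis of the Green function from the linearized Coulomb Vlasov--Poisson literature, especially the argument of~\cite{bian_linear_2024}. Since $Q_hF$ inherits holomorphy, evenness and $\weight{z}^{-6}$ decay, everything applies uniformly in $h$. One shifts the inverse Laplace contour to $\re\lambda = -\sigma\n{k}$. The only zeros of $D_\h$ crossed are the two Langmuir roots $\lambda_{\h,\pm}(k)$ for $\n{k}<1$, and this uses the Penrose-type nonvanishing guaranteed by $F'<0$ on $(0,\infty)$ (perturbative in $h$). Their residues give $\Gosc_{\h,\pm}$ and the remaining contour integral gives $\Gr_\h$. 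This produces the decomposition of Theorem~\ref{thm:Green_functions_limit} at $h=0$ with the same bounds.

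For the root estimates, expand for small $\n{k}$, with $\lambda$ near $\pm i$:
\begin{equation*}
	D_\h(\lambda,k) = 1 + \frac{1}{\lambda^2} + \frac{c_2\n{k}^2}{\lambda^4} + \dots
\end{equation*}
The coefficients $c_{2j}$ are moments of $Q_hF$. Summation by parts shows $Q_hF$ has the same zeroth and second moments as $F'$, and it differs from $F'$ only from the fourth-order term on, by $O(h^2)$. This gives $D_\h - D_0 = O(h^2\n{k}^4)$ near $\pm i$, and the exponentially small imaginary part is also perturbed at relative order $h^2\n{k}^2$ or smaller. Since $\partial_\lambda D_0(\pm i,0)\neq 0$, the implicit function theorem yields $\n{\lambda_{\h,\pm}-\lambda_{0,\pm}} \lesssim h^2\n{k}^4$. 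The residue $a_{\h,\pm} = 1/\partial_\lambda D_\h(\lambda_{\h,\pm})$ inherits the same bound, after also using a smooth cutoff in $k$ that is independent of $h$.

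For $\Gr$, write $1/D_\h - 1/D_0 = (D_0-D_\h)/(D_\h D_0)$ on the shifted contour. Use the lower bound $\n{D_\h}\gtrsim 1$ away from the roots, uniformly in $h$, together with $\n{D_\h-D_0}\lesssim \min(1,h\n{k})^2$ times the weight that produced $\n{k}^3\weight{k}^{-4}$ for $\hatGr$. Integrating in $\lambda$ as in the $h=0$ case then gives the stated difference bound.

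The main obstacle is the uniform lower bound on $D_\h$ in the intermediate regime $h\n{k}\sim 1$, where $Q_hF$ is not close to $F'$. There I would try to check directly that $Q_hF$ is itself a thin-tail acceptable profile. Since $F$ is even and decreasing on $(0,\infty)$, $Q_hF(s)<0$ for $s>0$ and $Q_hF$ is odd, which gives the Penrose condition for every $h$. Compactness in $h\n{k}$ on bounded ranges then handles the remaining uniformity.
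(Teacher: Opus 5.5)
You follow the paper on the decomposition, on the Penrose argument for the centered quotient $Q_hF$ (odd, negative on $(0,\infty)$), and on comparing roots and residues through the large-$\vartheta$ moment expansion and the implicit function theorem. Two small corrections. The moment that actually changes is the third one, $\int s^3 Q_hF\,ds=\int s^3F'\,ds-\frac{(h|k|)^2}{4}M_0$, and this is what produces the $h^2|k|^4\lambda^{-4}$ term in $D_\h-D_0$. Also, your Volterra kernel is misstated: a translation in $v$ becomes a modulation of $\widehat{\mu}$, not a finite difference of it.

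\textbf{The gap.} The bound on $\hatGr_\h-\hatGr_0$ is the genuinely new estimate in this statement, and it cannot be obtained by bounding $(D_0-D_\h)/(D_\h D_0)$ in absolute value along the contour. At low frequency, the line $\Re{\lambda}=-\sigma|k|$ passes at distance about $\sigma|k|$ from $\lambda_{\h,\pm}\approx\pm i\sqrt{\CK M_0}$. There $|D_\h|\sim|k|$, so your lower bound $|D_\h|\gtrsim1$ fails. The integrand behaves like $\frac{a_{\h,\pm}-a_{0,\pm}}{\lambda-\lambda_{\h,\pm}}+\frac{a_{0,\pm}(\lambda_{\h,\pm}-\lambda_{0,\pm})}{(\lambda-\lambda_{\h,\pm})(\lambda-\lambda_{0,\pm})}$, which has size $h^2|k|^2$ on a stretch of length $|k|$. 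That gives $h^2|k|^3$, whereas the statement requires $|k|^3(h|k|)^2=h^2|k|^5$.

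Bending the contour to stay a fixed distance from the poles, as the paper does with $\Gamma_2,\Gamma_3$, still loses one power. There $1/\epsilon_{\mu,\h}-1/\epsilon_{\mu,0}\approx\frac{\CK M_0h^2|k|^4}{4(\lambda^2+\CK M_0)^2}$, which is of size $h^2|k|^4$ on a piece of length $O(1)$. The obstruction is already present at $h=0$: $|1/\epsilon_{\mu,0}-1|\approx|\CK M_0/(\lambda^2+\CK M_0)|$ is of order one there. So there is no pointwise weight producing $|k|^3$ that you could multiply by $\min(1,h|k|)^2$. The factor $|k|^3$ comes from a cancellation between the contour integral and the residues.

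\textbf{The missing idea} is the paper's rational subtraction. Set $P_{\h,r}(\lambda)=\frac{\CK M_0}{\lambda^2+\CK M_0}-\frac{b_\h(r)}{(\lambda^2+\CK M_0)^2}$ with $b_\h(r)=3\CK M_2r^2+\frac{\CK M_0}{4}h^2r^4$. Its only poles, at $\pm i\sqrt{\CK M_0}$, lie to the right of $\Gamma$, and it decays like $|\lambda|^{-2}$. Hence $\int_\Gamma e^{2\pi\lambda t}P_{\h,r}\,d\lambda=0$ for $t>0$, and $\hatGr_\h$ is the contour integral of $Q_{\h,r}=(1-\epsilon_{\mu,\h})/\epsilon_{\mu,\h}+P_{\h,r}$. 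One then proves $|Q_{\h,r}-Q_{0,r}|\lesssim r^2\min(1,h^2r^2)$ on $\Gamma_1$ and $|Q_{\h,r}-Q_{0,r}|\lesssim r^4|\lambda|^{-2}\min(1,h^2r^2)$ on the rays. This uses the explicit form of $N_{\h,r}$ and the lower bound $|\epsilon_{\mu,\h}|\ge|\lambda^2+\CK M_0|/(2|\lambda|^2)$.

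It is essential that the correction depend on $h$ through $b_\h$, whose $h^2r^4$ term is exactly your third-moment shift. With an $h$-independent correction, the term $P_{0,r}-P_{\h,r}=\frac{\CK M_0h^2r^4}{4(\lambda^2+\CK M_0)^2}$ survives, and you are back to $h^2|k|^4$. So importing the classical argument for $\hatGr_0$ does not by itself give the difference bound.

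Finally, the regime $h|k|\sim1$ that you single out as the main obstacle only occurs for non-small $|k|$. There, no poles are nearby, and your direct resolvent-difference argument (uniform Penrose bound plus compactness) is adequate. The real difficulty is at small $|k|$.
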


	The Green functions $\Gosc_{h,\pm}$ and $\Gr_h$ are studied in detail in section \ref{physicalspace}.

\medskip


\section{Fourier transform of the Wigner equation}


	Our strategy will be to work mainly on the Wigner equation~\eqref{eq:linear_Wigner} and notice that it is very similar to the Vlasov equation~\eqref{eq:VP1} when taking the Fourier transform in the $x$ variable of both equations.

	If $k \in \Rd$, we set
	\begin{equation*}
		\frac{1}{k} := \frac{k}{\n{k}^2}\, .
	\end{equation*}
	The function $\cK_\h$ defined in Equation~\eqref{eq:Wigner_Vlasov_Kh} then satisfies
	\begin{equation*}
		\cF_x\lt(\cK_\h\underset{v}{*} \mu\rt)(k,v) = \widehat{E_\h}(k)\cdot \frac{\mu(v+\tfrac{h k}{2}) - \mu(v-\tfrac{h k}{2})}{hk} = \widehat{E_\h}(k)\cdot D_{h k}\mu(v) \, ,
	\end{equation*}
	where $E_\h = - \nabla V_\h$ with $V_\h$ defined in Equation~\eqref{eq:potential_quantum}, and where we defined the discrete gradient $D_q \mu$ of $\mu$ by
	\begin{equation*}
	D_q\mu(v) = \frac{\mu(v+\tfrac{q}{2}) - \mu(v-\tfrac{q}{2})}{q} \, .
	\end{equation*}
	We observe that $D_q \mu(v)$ is uniformly bounded in $q > 0$ and converges to $\frac{q}{\n{q}}\cdot D_0 \mu$ as $q$ goes to $0$, where $D_0\mu := \nabla_v \mu$ is the usual gradient in the velocity variable. We note that
	\begin{equation*}
		D_q \mu(v) = D_0 \mu(v) + O(q^2) \, .
	\end{equation*}
	From the above computation, taking the Fourier transform of the linear Wigner equation~\eqref{eq:linear_Wigner} in the $x$ variable, with dual Fourier variable $k$, gives
	\begin{equation}\label{eq:transfo}
		\dpt \cF_x(\Wigner{\op}) + 2i\pi k\cdot v \,\cF_x(\Wigner{\op}) + \widehat{E_\h}(k)\cdot D_{h k}\mu(v) = 0,
	\end{equation}
	while the same Fourier transform of the Vlasov equation leads to
	\begin{equation}\label{eq:Vlasov_Fourier}
		\dpt \cF_x(f) + 2i\pi k\cdot v \,\cF_x(f) + \widehat{E}(k)\cdot \Dv\mu(v) = 0 \, .
	\end{equation}
	In particular, the classical case~\eqref{eq:Vlasov_Fourier} appears to be the limit of the quantum case~\eqref{eq:transfo} as $\h$ goes to $0$, the main difference between~\eqref{eq:transfo} and~\eqref{eq:Vlasov_Fourier} being that the usual gradient is replaced by the discrete one in the quantum equation.


\section{Study of the dielectric function}


	In this section, we study the dielectric function to help us take into account the effect of the interactions on the position density. The proof follows the strategy of~\cite{bedrossian_linearized_2022, bian_linear_2024, degond_spectral_1986}.


\subsection{Construction of the dielectric function}


	We look at the equation
	\begin{equation*}
		\dpt \cF_x(f) + 2i\pi k\cdot v \,\cF_x(f) + \widehat{E_\h}(k)\cdot D_{h k}\mu(v) = 0 \, ,
	\end{equation*}
	with $f = \Wigner{\op}$ if $h>0$ and $f$ a solution of the Vlasov equation and $D_0 = \Dv$ if $h=0$. We write $E$ the force field corresponding to $f$. We know take the Laplace transform in time with dual variable $\lambda$, i.e. we define
	\begin{equation*}
		\widetilde{f}(\lambda,k,v) = \int_0^\infty\!\!\!\intd e^{-2\pi\(\lambda t + i x\cdot k\)} f(t,x,v)\d x \d t
	\end{equation*}
	which yields
	\begin{equation*}
		2\pi\lt(\lambda + i k\cdot v\rt)\widetilde{f} + \widetilde{E}(k)\cdot D_{h k}\mu(v) = \widehat{f}^\init,
	\end{equation*}
	where $f^\init(x,v) = f(0,x,v)$. Assume that $\Re{\lambda}$ is positive and large enough, so that it is on the right of the spectrum of the linearized Vlasov--Poisson equation. Since 
	\begin{equation*}
	\widetilde{E}(k) = -2i\pi\, k\,\widehat{K}(k)\, \widetilde{\rho}(\lambda,k) \, ,
	\end{equation*}
	dividing by $ 2\pi \lt(\lambda + i k\cdot v \rt)$ and integrating over $v$ gives
	\begin{equation} \label{division}
		\epsilon_{\mu,\h}(\lambda,k)\, \widetilde{\rho}(\lambda,k) = \widetilde{\rho^\circ}(\lambda,k)
	\end{equation}
	where
	\begin{equation*}
		\widetilde{\rho^\circ}(\lambda,k) = \frac{1}{2\pi}\intd \frac{\widehat{f}^\init(k,v)}{\lambda + i k\cdot v} \d v
	\end{equation*}
	and where the dielectric function $\epsilon_{\mu,\h}(\lambda,k)$ is defined by 
	$\epsilon_{\mu,\h}(\lambda,k) = \epsilon_{\mu,\h}^+(\lambda,k)$ with
	\begin{equation*}
		\epsilon_{\mu,\h}^+(\lambda,k) = 1 - \frac{1}{2\pi}\intd \frac{\widehat{\nabla K}(k)\cdot D_{h k}\mu(v)}{\lambda + i k\cdot v} \d v \, .
	\end{equation*}
	The inverse Laplace formula gives that for $\Lambda\in\R$ large enough
	\begin{equation*}
		\widehat{\rho}(t,k) = \frac{1}{i} \int_\Gamma \frac{\widetilde{\rho}(\lambda,k)}{\epsilon_{\mu,\h}(\lambda,k)} \,e^{2\pi\lambda t}\d\lambda
	\end{equation*}
	where $\Gamma = \Lambda + i\R$. 

	Let us first comment the term $\rho^\circ$. Let $f^\circ$ be the solution of the free transport 
	\begin{equation*}
		\partial_t f^\circ + v \cdot \nabla_x f^\circ = 0
	\end{equation*}
	with initial data $f^\init(x,v)$. Then its Fourier--Laplace transform $\tilde f^\circ$ equals $(2\pi)^{-1}(\lambda + i\, k \cdot v)^{-1} \hat f^\circ$. Thus $\widetilde{\rho^\circ}$ is exactly the Fourier--Laplace transform of the density $\rho^\circ$ created by $f^\circ$, that is
	\begin{equation*}
		\rho^\circ(t,x) = \intd f^\circ(t,x,v) \d v = \intd f^\init(x-t\,v,v)\d v\, .
	\end{equation*}
	
	Let us now study the dielectric function. We observe that $\epsilon_{\mu,\h}(\lambda,k)$ defines a holomorphic function on the half-plane $\Re{\lambda}>0$, since then, the denominator never vanishes. 
	For simplicity, let us assume that $k = (k_1,0,0)$ with $k_1\geq 0$. Then for $\Re{\lambda}>0$ and with $v=(v_1,v_2,v_3)$, we get
	\begin{equation*}
		\epsilon_{\mu,\h}(\lambda,k) = 1 - \frac{1}{2\pi}\intd \frac{\widehat{\nabla K}(k)\, D_{h k} \mu(v)}{\lambda + i\,k_1\,v_1} \d v\, .
	\end{equation*}
	We note that
	\begin{equation*}
		\epsilon_{\mu,\h}(\lambda,k) = \epsilon_{\mu,0}(\lambda,k) + O(\n{k}^2h^2) \, .
	\end{equation*}
	We now extend the dielectric function to $\Re \lambda \le 0$. The distribution $(\lambda + i\, x)^{-1}$ of the variable $x$ for $\Re{\lambda} > 0$ can be analytically extended to $(\lambda + i\, x)^{-1} + 2 i \pi\, \delta_{i\lambda}$ for negative values of $\Re{\lambda}$ and to $\pv(1/ix) + i \pi \,\delta_{i\lambda}$ for $\eps = 0$, where $\pv$ denotes the principal value. Thus, we can extend the dielectric function as follows
	\begin{align}
		\epsilon_{\mu,\h}(\lambda,k) &= \epsilon_{\mu,\h}^+(\lambda,k) + i\,\widehat{K}(k) \,\epsilon_{\mu,\h}^0(\lambda,k)
		& \text{ if } \Re{\lambda} < 0,
		\\\label{eq:eps_zero}
		\epsilon_{\mu,\h}(\lambda,k) &= \epsilon_{\mu,\h}^\text{pv}(\lambda,k) + \frac{i}{2}\,\widehat{K}(k) \,\epsilon_{\mu,\h}^0(\lambda,k)
		& \text{ if } \Re{\lambda} = 0,
	\end{align}
	where
	\begin{align*}
		\epsilon_{\mu,\h}^\text{pv}(\lambda,k) 
		&= 1 - \lim_{\eps\to 0}\frac{1}{2\pi}\intd \frac{\widehat{\nabla K}(k)\cdot D_{h k}\mu(v)}{\lambda + i k\cdot v}\, \indic_{\n{\lambda + ik\cdot v}\geq \eps}\d v,
		\\
		\epsilon_{\mu,\h}^0(\lambda,k) &= 2\pi\int_{\R^2} D_{h k} \mu(\vartheta,v_2,v_3) \d v_2 \d v_3 
	\end{align*}
	with $\vartheta = i \lambda/|k|$ (we refer the reader to \cite[p.~26]{schekochihin_lectures_2023}, for a detailed discussion). We note that
	\begin{equation*}
		\epsilon_{\mu,\h}^\text{pv}(\lambda,k) = \epsilon_{\mu,0}^\text{pv}(\lambda,k) + O(\n{k}^{2\theta} h^{2\theta})
	\end{equation*}
	and
	\begin{equation*}
		\epsilon_{\mu,\h}^0(\lambda,k) = \epsilon_{\mu,0}^0(\lambda,k) + O(\n{k}^{2\theta} h^{2\theta}) \, ,
	\end{equation*}
	for any $\theta\in[0,1]$.


\subsection{Zeros of the dielectric function when \texorpdfstring{$\Re\lambda \ge 0$}{lambda has a nonnegative real part}}


	This section is devoted to the study of the so-called dispersion relation $\epsilon_{\mu,\h}(\lambda,k) = 0$. The following computations are well-known in physics in the case of the Vlasov--Poisson equation, and can be found, for instance, in~\cite[p.~43]{schekochihin_lectures_2023}. In particular, it is known that if $\mu$ is radially decreasing, then there is no unstable mode. More precisely, the following lemma holds.
		
	\begin{lem}
		If $\mu$ is radial and (strictly) decreasing , then $\epsilon_{\mu,\h}(\lambda,k)$ does not vanish when $\Re \lambda \geq 0$.
	\end{lem}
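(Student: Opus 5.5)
By rotation invariance of $\mu$ (radial), we may take $k=(k_1,0,0)$ with $k_1=\n{k}>0$; the case $k=0$ is excluded or trivial. We reduce everything to the one-dimensional marginal. Since $D_{hk}\mu$ only differences in the $v_1$ direction, integrating in $(v_2,v_3)$ gives
\begin{equation*}
	\int_{\R^2} D_{hk}\mu(v)\d v_2\d v_3 = \frac{F(v_1+\tfrac{hk_1}{2})-F(v_1-\tfrac{hk_1}{2})}{hk_1} =: G_h(v_1),
\end{equation*}
with $G_0=F'$. Because $F$ is even and strictly decreasing on $(0,\infty)$, $F(a)>F(b)$ whenever $\n{a}<\n{b}$. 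Hence $G_h(s)<0$ for $s>0$, $G_h(s)>0$ for $s<0$, and $G_h$ is odd. This sign structure is the only property of $\mu$ we will use.

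\emph{Case $\Re\lambda>0$.} Write $\lambda=\gamma+i\omega$ with $\gamma>0$. Then
\begin{equation*}
	\epsilon_{\mu,\h}(\lambda,k)=1-\frac{c(k)}{2\pi}\int_\R \frac{G_h(s)}{\gamma+i(\omega+k_1 s)}\d s,
\end{equation*}
where $c(k)$ is the real constant coming from $\widehat{\nabla K}(k)\cdot e_1$ (it is of the form $-2i\pi k_1\widehat K(k)$ times the sign conventions; we will track its phase so that the whole prefactor is real of one sign). We take the imaginary part. After multiplying numerator and denominator by the conjugate, it is proportional to
\begin{equation*}
	\int_\R \frac{(\omega+k_1 s)\,G_h(s)}{\gamma^2+(\omega+k_1s)^2}\d s .
\end{equation*}
If $\omega\neq0$, we substitute $s\mapsto -s$ and use the oddness of $G_h$ to rewrite this as
\begin{equation*}
	\int_0^\infty G_h(s)\left[\frac{\omega+k_1s}{\gamma^2+(\omega+k_1s)^2}-\frac{\omega-k_1s}{\gamma^2+(\omega-k_1s)^2}\right]\d s .
\end{equation*}
The bracket has a definite sign. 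Combining the two terms gives a numerator proportional to $k_1 s(\gamma^2-\omega^2+k_1^2s^2)$, which is not obviously signed. So we fall back on the standard Penrose trick: if $\Im\epsilon=0$ and $\Re\epsilon=0$ simultaneously, we subtract $\omega$ times the imaginary-part identity from the real-part identity. This leaves
\begin{equation*}
	1 = \text{(positive const)}\int_\R \frac{-k_1 s\,G_h(s)}{\gamma^2+(\omega+k_1s)^2}\d s,
\end{equation*}
up to sign. The integrand $-sG_h(s)$ is $\ge0$, so the right-hand side has a definite sign. Using the correct sign of the Coulomb prefactor (repulsive), this sign is negative, which is a contradiction. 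If instead the real and imaginary identities are arranged so that the contradiction shows up with $1=$ a nonpositive quantity, we conclude in the same way.

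\emph{Case $\Re\lambda=0$.} We use \eqref{eq:eps_zero}. The term $\epsilon^0_{\mu,\h}=2\pi G_h(\vartheta)$, with $\vartheta=i\lambda/k_1$ real, gives the imaginary part, and it vanishes only if $\vartheta=0$, i.e.\ $\lambda=0$, since $G_h$ has a strict sign off the origin. At $\lambda=0$, the principal value equals
\begin{equation*}
	1+\text{(positive)}\int_\R \frac{-G_h(s)}{s}\d s \ \ge 1,
\end{equation*}
because $-G_h(s)/s>0$. Hence $\epsilon\neq0$ there too.

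The main obstacle is bookkeeping of the constants and signs (the factor $\widehat{\nabla K}(k)\cdot D_{hk}\mu$, the $2\pi$'s, and the direction of the analytic continuation), so that the combination Re$\,-\,\omega\,$Im produces a manifestly signed integral. We will check this first on the classical case $h=0$, where the result is the known Penrose criterion, and then observe that only the sign property of $G_h$ enters. That property holds uniformly in $h$ by the monotonicity argument above.
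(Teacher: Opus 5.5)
Your route is sound, and for $\re\lambda>0$ it differs from the paper's. However, the decisive step is misstated and only asserted ``up to sign'', so it needs fixing before the argument is a proof.

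The prefactor is not real. With the paper's convention $\widehat{\nabla K}(k)=2i\pi k\,\widehat K(k)$, the constant in your formula is $c(k)=+2i\pi k_1\widehat K(k)$, which is purely imaginary, with $\widehat K(k)=\CK\n{k}^{-2}>0$. Hence the two integrals play the opposite roles to the ones you assigned. With $\lambda=\gamma+i\omega$ and $D(s)=\gamma^2+(\omega+k_1s)^2$,
\begin{equation*}
	\epsilon_{\mu,\h}(\lambda,k)=1-k_1\widehat K(k)\int_\R\frac{(\omega+k_1s)\,G_h(s)}{D(s)}\d s-i\,\gamma\,k_1\widehat K(k)\int_\R\frac{G_h(s)}{D(s)}\d s .
\end{equation*}
So $\im\epsilon_{\mu,\h}=0$ forces $\int_\R G_h/D\d s=0$. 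The $\omega$-term of the real part then drops out, and
\begin{equation*}
	\re\epsilon_{\mu,\h}(\lambda,k)=1+k_1^2\,\widehat K(k)\int_\R\frac{-s\,G_h(s)}{D(s)}\d s\geq 1 ,
\end{equation*}
which is the contradiction.

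This bookkeeping is not cosmetic. With a real prefactor, as your ``imaginary part'' presupposes, the same elimination gives $1=\frac{c\gamma}{2\pi\omega}\int_\R\frac{-k_1sG_h(s)}{D(s)}\d s$. Its sign changes with that of $\omega$, so nothing is contradicted. With the opposite sign of $c$ (equivalently an attractive kernel), the last display becomes $1-k_1^2\n{\widehat K(k)}\int_\R\frac{-sG_h}{D}\d s$, which can vanish (Jeans instability). You should therefore drop your fallback sentence claiming the conclusion whichever way the signs come out. Also drop the abandoned bracket computation: its claim of a definite sign is false, since its numerator $2k_1s(\gamma^2-\omega^2+k_1^2s^2)$ changes sign in $s$ when $\n{\omega}>\gamma$.

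With this correction, your argument is a valid alternative to the paper's. The paper treats $\re\lambda>0$ by Nyquist's argument principle. It counts zeros in the right half-plane through the winding number of $\epsilon_{\mu,\h}$ along $i\R$ closed at infinity. This requires the Plemelj boundary values on $i\R$, the limit from $i\R+\eps$, and $\epsilon_{\mu,\h}\to1$ at infinity. It then uses sign information only where the image curve meets the real axis: at $\lambda=0$, where $\epsilon^{\textnormal{pv}}_{\mu,\h}(0,k)>0$, and at infinity.

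Your Penrose-type computation works pointwise in the open half-plane and needs no contour or limiting procedure. It also gives the quantitative bound $\re\epsilon_{\mu,\h}\geq1$ on $\set{\im\epsilon_{\mu,\h}=0}$. The price is that it uses the global sign condition $-sG_h(s)\geq0$ on all of $\R$, whereas the Nyquist count is the form that extends to non-monotone profiles via the Penrose criterion.

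For $\re\lambda=0$ your argument coincides with the paper's. Your reduction to the centered difference $G_h$ of the even, decreasing marginal $F$ is exactly what makes the sign property uniform in $h$.
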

	
	\begin{proof}
		Let us start with the case $\Re \lambda >0$. The proof uses Nyquist's method. Let $\Gamma$ be a contour in $\Re \lambda > 0$. As $\epsilon_{\mu,\h}(\lambda,k)$ has no pole in this half place,
		\begin{equation*}
			\frac{1}{2 i \pi} \int_\Gamma \partial_\lambda \ln \epsilon_{\mu,\h}(\lambda,k) \d\lambda
		\end{equation*}
		is the number of zeros of $\epsilon_{\mu,\h}$, counted with their multiplicities, which are included in $\Gamma$. We choose for $\Gamma$ the contour $i \R + \eps$ with $\eps > 0$, closed by a circle "at infinity". As $\epsilon_{\mu,\h} \to 1$, when $\n{\lambda} \to + \infty$, we may let $\eps \to 0^+$. The number $N$ of unstable eigenvalues in the half plane $\Re \lambda > 0$ is thus given by
		\begin{equation*}
			2 i \pi N = \int_{+ i \infty + 0}^{- i \infty + 0} \partial_\lambda \ln \epsilon_{\mu,\h}(\lambda,k) \d\lambda \, .
		\end{equation*}
		We thus have to count the number of times $\epsilon_{\mu,\h}(\lambda,k)$ circles around the origin.
		But when $\Re{\lambda} = 0$, $\partial_\lambda \epsilon_{\mu,\h}$ is given by \eqref{eq:eps_zero} and since $\widehat{K}$ is real-valued, it is real only when $\epsilon_{\mu,\h}^0(\lambda,k)$ vanishes, namely only at $\lambda = 0$. But
		\begin{equation*}
			\epsilon_{\mu,\h}^\text{pv}(0,k) = 1 + \widehat{K}(k) \lim_{\eps\to 0}\intd \frac{ \mu(v-\frac{hk}{2}) - \mu(v+\frac{h k}{2})}{h\,k_1\,v_1}\,\indic_{\n{v_1}\geq \eps}\d v
		\end{equation*}
		where $k = (k_1,0,0)$ with $k_1>0$. As $\mu$ is radial and strictly decreasing,
		\begin{equation*}
			\frac{\mu(v_1-\tfrac{hk_1}{2},v_2,v_3) - \mu(v_1+\tfrac{h_1k}{2},v_2,v_3)}{v_1} > 0 \, ,
		\end{equation*}
		both for positive and negative $v_1$,
		More precisely, for $v_1>0$ fixed, the function $k_1\mapsto \nu(v_1-\tfrac{hk_1}{2}) - \nu(v_1+\tfrac{h_1k}{2})$ is strictly positive on $\R$ and continuous, hence we can define for any $r>0$,
		\begin{equation}\label{eq:bound_convexity_mu}
			\nu_r(v) := \inf_{\n{q}\leq r} (- D_q\mu(v)) > 0 \, .
		\end{equation}
		The same holds if $v_1<0$, hence $\epsilon_{\mu,\h}^\text{pv}(0,k) > 0$. In particular $\epsilon_{\mu,\h}(\lambda,k)$ does not circle the origin when $\lambda$ describes $i \R$. Thus $N = 0$, which ends the proof in the case $\Re \lambda > 0$.
		
		In the case $\Re{\lambda} = 0$, then we use again the fact that $\epsilon_{\mu,\h}^\text{pv}(\lambda,k)$ 
		and $\epsilon_{\mu,\h}^0(\lambda,k)$ are real. Hence by Formula~\eqref{eq:eps_zero}, if $\epsilon_{\mu,\h}(\lambda,k) = 0$, then $\epsilon_{\mu,\h}^0(\lambda,k) = 0$, which implies that $\lambda = 0$. But in this case $\epsilon_{\mu,\h}^\text{pv}(0,k) > 0$, which leads to a contradiction.
	\end{proof}


\subsection{An explicit Poisson equilibrium}\label{sec:explicit-Poisson-equilibrium}


	We detail here an example for which the quantum kernel can be computed explicitly. We define the so-called Poisson equilibrium
	\begin{equation}\label{Poisson-equilibrium}
		\mu_P(v):=\frac{1}{\pi^2(1+\sabs{v}^2)^2} \, ,
	\end{equation}
	which is a radial, strictly decreasing and normalized equilibrium whose Fourier transform is given by
	\begin{equation}\label{eq:mu_P_Fourier}
		\widehat{\mu}_P(\xi) = e^{-2\pi \n{\xi}} .
	\end{equation}

	\begin{prop}\label{prop:Green-explicit-solution}
		We consider the linearized Hartree equation~\eqref{eq:Hartree_linear} around the homogeneous state $\opmu=\mu_P(\opp)$. Then if $\rho_\h$ is the associated spatial density, its spatial Fourier transform satisfies
		\begin{equation}\label{eq:Green-explicit-solution}
			\widehat\rho_\h(t,k) = \widehat{\rho_\h^\circ}(t,k)- 2\pi\CK \int_0^t e^{-\n{k}(t-s)} \, \frac{\sin\!\lt(2\pi\Omega_k\lt(t-s\rt)\rt)}{\Omega_k} \, \widehat{\rho_\h^\circ}(s,k) \d s \, ,
		\end{equation}
		where
		\begin{equation*}
			\Omega_k=\sqrt{\CK+\tfrac{h^2\n{k}^4}{4}}
		\end{equation*}
		and
		\begin{equation*}
			\rho_\h^\circ(t,x) =\intd \Wigner{\op^\init}(x-tv,v) \d v
		\end{equation*}
		is the free-transport density.
	\end{prop}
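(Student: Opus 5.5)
The plan is to avoid the general dielectric-function machinery and exploit the explicit Fourier transform~\eqref{eq:mu_P_Fourier}. This turns the equation for $\widehat\rho_\h(t,k)$ into a scalar Volterra equation in time with an explicit kernel, which can then be solved exactly by a Laplace transform.

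\textbf{Step 1 (Duhamel in Fourier variables).} Starting from~\eqref{eq:transfo}, I write, for fixed $k$,
\begin{equation*}
	\cF_x f(t,k,v) = e^{-2i\pi t\,k\cdot v}\,\widehat{f^\init}(k,v) - \int_0^t e^{-2i\pi (t-s)\,k\cdot v}\,\widehat{E_\h}(s,k)\cdot D_{hk}\mu_P(v)\d s .
\end{equation*}
Integrating in $v$ gives $\widehat\rho_\h(t,k) = \widehat{\rho_\h^\circ}(t,k) - \int_0^t \widehat{E_\h}(s,k)\cdot \Phi(t-s,k)\d s$, where $\Phi(\tau,k) := \intd e^{-2i\pi\tau k\cdot v}\,D_{hk}\mu_P(v)\d v$.

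\textbf{Step 2 (explicit kernel).} Translations in $v$ become phases on the Fourier side. Hence
\begin{equation*}
	\Phi(\tau,k) = \frac{hk}{h^2\n{k}^2}\bigl(e^{i\pi h\n{k}^2\tau}-e^{-i\pi h\n{k}^2\tau}\bigr)\,\widehat{\mu}_P(\tau k) = \frac{2i\,k}{h\n{k}^2}\,\sin(\pi h\n{k}^2\tau)\, e^{-2\pi\n{k}\tau},
\end{equation*}
using~\eqref{eq:mu_P_Fourier}. Next I insert $\widehat{E_\h} = -2i\pi k\widehat K(k)\widehat\rho_\h$ with $\widehat K(k) = \CK/\n{k}^2$, which follows from $\cF(1/\n{x}) = 1/(\pi\n{k}^2)$. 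This gives the closed equation
\begin{equation*}
	\widehat\rho_\h(t,k) = \widehat{\rho_\h^\circ}(t,k) - \int_0^t \kappa_k(t-s)\,\widehat\rho_\h(s,k)\d s, \qquad \kappa_k(\tau) = \frac{4\pi\CK}{h\n{k}^2}\, e^{-2\pi\n{k}\tau}\sin(\pi h\n{k}^2\tau).
\end{equation*}
For $h=0$ the kernel is read as its limit $4\pi^2\CK\,\tau e^{-2\pi\n{k}\tau}$, so the classical case is included.

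\textbf{Step 3 (resolvent).} Take the Laplace transform in $t$ and set $a = 2\pi\n{k}$, $b = \pi h\n{k}^2$. Then $\widetilde{\kappa_k}(\lambda) = 4\pi^2\CK/((\lambda+a)^2+b^2)$, and
\begin{equation*}
	\widetilde\rho_\h = \frac{\widetilde{\rho_\h^\circ}}{1+\widetilde{\kappa_k}} = \widetilde{\rho_\h^\circ}\Bigl(1-\frac{4\pi^2\CK}{(\lambda+a)^2+b^2+4\pi^2\CK}\Bigr).
\end{equation*}
The quantity $b^2+4\pi^2\CK = 4\pi^2(\CK + h^2\n{k}^4/4) = (2\pi\Omega_k)^2$ produces exactly $\Omega_k$. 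Inverting the Laplace transform yields the resolvent kernel $2\pi\CK\, e^{-a\tau}\sin(2\pi\Omega_k\tau)/\Omega_k$, and hence~\eqref{eq:Green-explicit-solution}.

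Equivalently, and without Laplace transforms, one can check directly that this resolvent $R$ solves $R + \kappa_k * R = \kappa_k$. This is an ODE identity for damped sinusoids.

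\textbf{Main obstacle.} The real difficulty is bookkeeping of the $2\pi$ factors, together with justifying the Laplace inversion. The damping exponent must be matched with the time and Fourier conventions: the computation above gives $e^{-2\pi\n{k}\tau}$, so I will recheck whether the statement's $e^{-\n{k}(t-s)}$ reflects a different scaling convention. The Laplace inversion is legitimate because $\widehat{\rho_\h^\circ}(\cdot,k)$ is bounded, and $\kappa_k$ is integrable with $1+\widetilde{\kappa_k}\neq0$ for $\re\lambda\ge0$.
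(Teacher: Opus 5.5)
Your proposal is correct and follows the paper's own route: Duhamel on \eqref{eq:transfo}, then the explicit Volterra kernel obtained from $\widehat{\mu}_P(\xi)=e^{-2\pi|\xi|}$ and the phase shifts in $D_{hk}\mu_P$, then the Laplace-transform resolvent. The paper normalizes the Laplace transform as $\int_0^\infty e^{-2\pi\lambda t}g(t)\,\mathrm{d}t$, which only rescales $\lambda$, so the two computations agree line by line.

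Your exponent $e^{-2\pi|k|(t-s)}$ is the right one, and it is not a different scaling convention. The paper's own proof ends with $G_{\mathrm P}(t,k)=\delta_0(t)-2\pi\CK\,\indic_{\{t\ge0\}}\,\Omega_k^{-1}\sin(2\pi\Omega_k t)\,e^{-2\pi|k|t}$, consistent with the subsequent remark on the classical limit. So the $e^{-|k|(t-s)}$ in the displayed statement is a typo in the paper.
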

	
	\begin{remark}
		One can write Equation~\eqref{eq:Green-explicit-solution} in the form $\widehat\rho = G_\textnormal{P} *_t \widehat{\rho^\circ}$. Then in the sense of distributions, if $\CK=(2\pi)^{-2}$,
		\begin{equation}\label{Green-classical-limit}
			G_\textnormal{P}(t,k) \longrightarrow
			\delta_0(t)- \indic_{\{t\ge0\}} \, \sin(t) \, e^{-2\pi \n{k}t} ,
		\end{equation}
		which is the Green kernel of the classical linearized Vlasov--Poisson system.
	\end{remark}

	\begin{remark}
		We emphasize that $\mu_P(v)\sim |v|^{-4}$ as $|v|\to\infty$ and
		\begin{equation*}
			\intd \n{v}^2 \mu_P(v) \d v = +\infty \, .
		\end{equation*}
		Thus, this subsection is about an exactly solvable model calculation; it is not an example covered by the finite-second-moment hypotheses used in the small-frequency expansion below.
	\end{remark}
	
	\begin{proof}[Proof of Proposition~\ref{prop:Green-explicit-solution}]
		Using the Duhamel formula on Equation~\eqref{eq:transfo} and then integrating in $v$ gives
		\begin{equation}\label{Volterra-rho-linear}
			\widehat\rho_\h(t,k) = \widehat{\rho_\h^\circ}(t,k) + \frac{2i\pi\CK\, k}{\n{k}^2}\cdot \int_0^t
			\widehat{D_{hk}\mu_P}(\lt(t-s\rt) k) \,\widehat\rho_\h(s,k) \d s \, .
		\end{equation}
		We have 
		\begin{equation*} 
		\widehat{D_{hk}\mu_P}(t\, k)
			=\frac{k}{h\n{k}^2}
			\left(e^{i\pi h\n{k}^2 t}-e^{-i\pi h\n{k}^2 t}\right)
			\widehat{\mu_P}(t\, k)
			=\frac{2 i k}{h\n{k}^2}
			\sin(\pi h\n{k}^2 t) \, e^{-2\pi\n{k}t} .
		\end{equation*}
		It follows that Equation~\eqref{Volterra-rho-linear} is the Volterra equation
		\begin{equation}\label{Volterra-rho-linear-explicit}
			\widehat\rho_\h(t,k) = \widehat{\rho_\h^\circ}(t,k) + \int_0^t A_\textnormal{P}(t-s,k) \, \widehat\rho_\h(s,k) \d s \, ,
		\end{equation}
		with kernel
		\begin{equation}\label{AP-explicit}
			A_\textnormal{P}(t,k) :=-\frac{4\pi\CK}{h\n{k}^2} \sin(\pi h\n{k}^2t) \, e^{-2\pi\n{k}t} \, .
		\end{equation}
		We now determine the corresponding Green kernel. We extend $\widehat{\rho_\h^\circ}$ by zero to $t<0$ and define $G_\textnormal{P}$ by
		\begin{equation}\label{Green-convolution}
			\widehat\rho_\h(\cdot,k) = G_\textnormal{P}(\cdot,k)*_t \widehat{\rho_\h^\circ} \, .
		\end{equation}
		Let 
		\begin{equation*}
			\L_t g(\lambda):=\int_0^\infty e^{-2\pi\lambda t}g(t) \d t \, ,
		\end{equation*}
		for $\Re \lambda$ sufficiently large, and let $b_k := h\n{k}^2/2$. Formula~\eqref{AP-explicit} gives
		\begin{equation*}
			\L_t A_\textnormal{P}(\lambda,k)
			=-\frac{\CK}{(\lambda+\n{k})^2+b_k^2} \, .
		\end{equation*}
		Hence the resolvent identity associated with Equation~\eqref{Volterra-rho-linear-explicit} implies
		\begin{equation*}
			\L_tG_\textnormal{P}(\lambda,k)
			=\frac{1}{1-\L_tA_\textnormal{P}(\lambda,k)} = 1-\frac{\CK}{(\lambda+\n{k})^2+\Omega_k^2}
		\end{equation*}
		with $\Omega_k:=\sqrt{\CK+b_k^2}$. Taking the inverse Laplace transform, we obtain the explicit formula
		\begin{equation*} 
			G_\textnormal{P}(t,k) =\delta_0(t) - 2\pi\CK \,\indic_{\{t\ge0\}} \, \frac{\sin(2\pi\Omega_k\, t)}{\Omega_k} \, e^{-2\pi \n{k}t},
		\end{equation*}
		where $\delta_0$ denotes the Dirac mass at $t=0$, and from which the result follows.
	\end{proof}


\subsection{Zeros of the dielectric function when \texorpdfstring{$\Re\lambda < 0$}{lambda has a negative real part}}\label{sec:dielsmall}


	\begin{proof}
		In the case $\Re{\lambda} < 0$, the dielectric function is given by 
		\begin{equation*}
			\epsilon_{\mu,\h}(\lambda,k) = \epsilon_{\mu,\h}^+(\lambda,k) + i\,\widehat{K}(k) \,\epsilon_{\mu,\h}^0(\lambda,k)
		\end{equation*}
		with
		\begin{align*}
			\epsilon_{\mu,\h}^+(\lambda,k) &= 1 + \intd \frac{\widehat{K}(k) \, D_{h k}\mu(v)}{\vartheta - v_1}\d v
			= 1 + \widehat{K}(k) \, \epsilon_{\mu,\h}^1(\lambda,k),
			\\
			\epsilon_{\mu,\h}^0(\lambda,k) &= 2\pi\int_{\R^2} D_{h k} \mu(\vartheta, v_{23}) \d v_{23}
		\end{align*}
		where $\vartheta = i\lambda/\n{k}$ satisfies $\Im{\vartheta}< 0$ and $v_{23} = (v_2,v_3)$. This can also be written
		\begin{equation}\label{eq:def_tildeps}
			\epsilon_{\mu,\h}(\lambda,k) = 1 + \widehat{K}(k) \,\tilde{\epsilon}_{\mu,\h}(\lambda,k) \quad \text{ with }\quad \tilde{\epsilon}_{\mu,\h} = \epsilon_{\mu,\h}^1 + i\,\epsilon_{\mu,\h}^0 \, .
		\end{equation} 
		The formulas are similar when $\vartheta$ is real. 
		
		The term $\epsilon_{\mu,\h}^0$ only vanishes at $\vartheta = 0$ and when $\vartheta\to\infty$. When $\vartheta=0$ and $\mu$ is radial and strictly decreasing, we already know that $\epsilon_{\mu,\h}^1 > 0$. As a consequence, $\epsilon_{\mu,\h}$ and $\tilde{\epsilon}_{\mu,\h}$ do not vanish for $\vartheta \in \lt[-R,R\rt] + i\lt[-\alpha,\alpha\rt]$ for arbitrarily large $R$ and small $\alpha$. More precisely, if $h\in [0,1]$ and $\n{k}\leq k_0$, we can take $\alpha$ and $R$ such that
		\begin{equation}\label{eq:bound_convexity_mu_2}
			\n{\tilde{\epsilon}_{\mu,\h}(\lambda,k)} \geq \frac{1}{2} \min\lt( 2\pi \int_{\R^2} \nu_{k_0}(0,v_{23}) \d v_{23}, 
			\intd \frac{\nu_{k_0}(v)}{v_1} \d v \rt) =: \tilde{\nu}(k_0)\, .
		\end{equation}
		Hence it remains to study $\epsilon_{\mu,\h}$ for large $\n{\vartheta}$. We follow standard computations (see for instance~\cite[p. 28]{schekochihin_lectures_2023}).

		First the term $\epsilon_{\mu,\h}^0$ decays like $\vartheta^{-N-1}$, since using the definition of $D_{h k}\mu$ and a first order Taylor estimate gives
		\begin{equation}\label{eq:boundsmall}
			\int_{\R^2} \n{D_{h k} \mu(\vartheta,v_{23})} \d v_{23} \leq \frac{C}{\weight{\vartheta-h k/2}^{N+1}} \Nrm{\weight{v}^{N+4}\nabla\mu}{L^\infty} .
		\end{equation}
		We now look at the term $\epsilon_{\mu,\h}^+$ in which we perform a Taylor expansion in $\vartheta$. More precisely,
		\begin{equation*}
			\intd \frac{D_{h k}\mu(v)}{\vartheta - v_1} \d v = \intd \frac{1}{\vartheta} \sum_{j=0}^N D_{h k}\mu(v) \lt(\frac{v_1}{\vartheta}\rt)^j +\frac{D_{h k}\mu(v)}{\vartheta - v_1} \lt(\frac{v_1}{\vartheta}\rt)^{N+1} \d v \, .
		\end{equation*}
		Let us bound the last term, that we will call $I_R$. We have
		\begin{equation}\label{eq:I_R}
			I_R
			= \frac{1}{\vartheta^{N+1}} \int_{\n{\vartheta - v_1}\ge 1} \frac{F_{h k}(v)}{\vartheta - v_1} \d v + \frac{1}{\vartheta^{N+1}} \int_{\n{\vartheta - v_1} \le 1} \frac{F_{h k}(v)}{\vartheta - v_1} \d v
		\end{equation}
		where
		\begin{equation*}
			F_{h k}(v) = v_1^{N+1} D_{h k} \mu(v) \, .
		\end{equation*}
		The first integral is bounded uniformly in $h\in[0,1]$ since 
		\begin{equation*}
			\Nrm{F_{h k}}{L^1} \leq C_{k_0}\sNrm{\weight{v}^{N+1}\Dv\mu}{L^1}\,.
		\end{equation*}
		Since the function $v_1\mapsto \vartheta-v_1$ is odd on $\set{v_1\in\R:\n{\vartheta-v_1}\leq 1}$, the second integral equals
		\begin{equation*}
			\int_{\n{\vartheta - v_1} \le 1} \frac{F_{h k}(v)}{\vartheta - v_1} \d v = \frac{1}{\vartheta^{N+1}} \int_{\n{\vartheta - v_1} \le 1} \frac{F_{h k}(v) - F_{h k}(\vartheta,v_{23})}{\vartheta - v_1} \d v \, .
		\end{equation*}
		Now, $v_2$ and $v_3$ being fixed,
		\begin{equation*}
			\int_{\n{\vartheta - v_1} \le 1} \frac{F_{h k}(v) - F_{h k}(\vartheta,v_{23})}{\vartheta - v_1} \d v \le C \sup_{\n{\vartheta - v_1} \le 1} \n{\partial_{v_1} F_{h k}(v)} .
		\end{equation*}
		After integration over $v_{23}$, we deduce that the second integral in Equation~\eqref{eq:I_R} is bounded uniformly in $\vartheta$ and $\h$. As a consequence, $\n{I_R} \leq C_{k_0,\mu}\,\vartheta^{-N-1}$, uniformly in $\h$ and $k$.
	
		For the other terms, one can compute that
		\begin{align*}
			I_j := -\intd D_{h k}\mu(v)\, v_1^j\d v = \frac{1}{hk}\intd \mu(v)\( \(v_1+hk/2\)^j - \(v_1-hk/2\)^j\)\d v
		\end{align*}
		and so, in particular, $I_j = 0$ if $j$ is even by symmetry of $\mu$, and, with the notation 
		\begin{equation*}
		M_n = \intd \mu(v) \, v_1^n \d v \, ,
		\end{equation*}
		one obtains
		\begin{align*}
			I_1 &= M_0\, , \qquad I_3 = 3\, M_2 + \lt(hk/2\rt)^2 M_0 \, ,
			\\
			I_5 &= 5 \, M_4 + \frac{5}{2} \(hk\)^2 M_2 + \lt(hk/2\rt)^4 M_0 \, .
		\end{align*}
		Hence doing the expansion up to $N = 6$ gives
		\begin{equation}\label{eq:expansion_tildeps}
			\tilde\epsilon_{\mu,\h}(\lambda,k) 
			= -\(\frac{M_0}{\vartheta^2} + \frac{3\,M_2}{\vartheta^4} + \frac{M_0\,k^2 h^2}{3\,\vartheta^4} 
			+ \frac{5\,M_4 + \eps(\vartheta)}{\vartheta^6}\)
		\end{equation}
		with
		\begin{equation*}
			\eps(\vartheta) = \(10\, M_2 + \(\tfrac{hk}{2}\)^2 M_0\) \(\tfrac{hk}{2}\)^2 + \tfrac{I_7}{\vartheta}\,.
		\end{equation*}
		That is
		\begin{equation*}
			\epsilon_{\mu,\h}(\lambda,k) = 1 - \widehat{K}(k) \(\frac{M_0}{\vartheta^2} + \frac{3\,M_2}{\vartheta^4} + \frac{M_0\,k^2h^2}{3\,\vartheta^4} + \frac{5\,M_4 + \eps(\vartheta)}{\vartheta^6}\).
		\end{equation*}
		The same arguments lead to a similar expansion for $\partial_\lambda \epsilon_{\mu,\h}(\lambda,k)$.
		
		Finally, the relation $\epsilon_{\mu,\h}(\lambda,k) = 0$ becomes
		\begin{equation*}
			\frac{M_0}{\vartheta^2} + \frac{3\,M_2}{\vartheta^4} + \frac{M_0\,k^2 h^2}{3\,\vartheta^4} + \frac{5\,M_4 + \eps(\vartheta)}{\vartheta^6}+ O(\n{\vartheta}^{-7}) = \widehat{K}(k)^{-1}
		\end{equation*}
		that is
		\begin{equation*}
			-\frac{M_0\,k^2}{\lambda^2} + \frac{3\,M_2k^4}{\lambda^4} + \frac{M_0\,k^6h^2}{3\,\lambda^4} - \frac{5\,M_4\,k^6 + \eps(\vartheta)\,k^6}{\lambda^6}+ O(\n{\vartheta}^{-7}) = \widehat{K}(k)^{-1}.
		\end{equation*}
		We are now ready to investigate the existence of roots of the dielectric function.
	
		First, for large $k$, we observe that $\tilde\epsilon_{\mu,\h}$ is bounded for $\vartheta\in \R + i \lt[-\alpha, \alpha\rt]$, uniformly in $\h$. Thus, provided $\n{k}$ is large enough, $\widehat{K}(k) \n{\tilde\epsilon_{\mu,\h}} \leq 1/2$. As a consequence, there is no eigenvalue $\lambda$ such that $\Re \lambda > - \alpha \n{k}$.
	
		Let us now consider the case of small $k$. Since $\widehat{K}(k) = \CK \n{k}^{-2}$, we have
		\begin{equation*}
			-\frac{M_0}{\lambda^2} + \frac{3\,M_2\n{k}^2}{\lambda^4} + \frac{M_0\n{k}^4 h^2}{3\,\lambda^4} - \frac{5\,M_4\n{k}^4}{\lambda^6}+ O(\n{k}^5) = \frac{1}{\CK} \, .
		\end{equation*}
		To order $\n{k}^2$, since $\Re{\lambda}<0$ this gives
		\begin{equation*}
			\frac{1}{\lambda^2} = \frac{M_0}{6\,M_2\n{k}^2} \Big(1 - \sqrt{1+\tfrac{12\,M_2\n{k}^2}{\CK\,M_0^2}}\Big) + O(\n{k}^2) = - \frac{1}{\CK\,M_0} + O(\n{k}^2)
		\end{equation*}
		so in particular, $\vartheta$ is large if and only if $k$ is small. 
		We now substitute this expansion into the remaining terms of the
		dispersion relation. It gives
		\begin{equation*}
			\frac{1}{\lambda^4}
			=\frac{1}{\CK^2M_0^2}
			-\frac{6M_2}{\CK^3M_0^4}\n{k}^2
			+O(\n{k}^4),
			\qquad
			\frac{1}{\lambda^6}
			=-\frac{1}{\CK^3M_0^3}+O(\n{k}^2).
		\end{equation*}
		On the other hand, multiplying the preceding dispersion relation by
		$\CK$ and isolating its first term gives
		\begin{equation*}
			\frac{\CK M_0}{\lambda^2}
			=-1
			+\frac{3\CK M_2\n{k}^2}{\lambda^4}
			+\frac{\CK M_0h^2\n{k}^4}{3\lambda^4}
			-\frac{5\CK M_4\n{k}^4}{\lambda^6}
			+O(\n{k}^5).
		\end{equation*}
		Applying the preceding expansions of $\lambda^{-4}$ and
		$\lambda^{-6}$ to the right-hand side, we obtain
		\begin{equation*}
			\frac{\CK M_0}{\lambda^2}
			=-1+\frac{3M_2}{\CK M_0^2}\n{k}^2
			+	{\left(
			-\frac{18M_2^2}{\CK^2M_0^4}
			+\frac{5M_4}{\CK^2M_0^3}
			+\frac{h^2}{3\CK M_0}
			\right)\n{k}^4}+O(\n{k}^5).
		\end{equation*}
		
		This gives
		\begin{equation*}
		\lambda_{\h,\pm} = \pm i \CK^{1/2} M_0^{1/2}
		\pm \frac{3 i M_2}{2 \CK^{1/2} M_0^{3/2}} \n{k}^2 + O(\n{k}^4) \, .
		\end{equation*}
		Moreover, observe that when doing the difference $\lambda_{\h,\pm} - \lambda_{0,\pm}$, all the terms of the expansion without the Planck constant cancel, and the remaining terms will always contain a factor $h^2$. It gives
		\begin{equation*}
			\n{\lambda_{\h,\pm} - \lambda_{0,\pm}} \lesssim \n{k}^4 h^2 .
		\end{equation*}
		Similar computations lead to
		\begin{equation*}
			\partial_k \lambda_{\h,\pm} = \pm \frac{3 i\, M_2}{ \CK^{1/2} M_0^{3/2}}\, k + O(\n{k}^3) \, 
		\end{equation*}
		and
		\begin{equation*}
			\n{\partial_k \lambda_{\h,\pm} - \partial_k \lambda_{0,\pm}} \lesssim \n{k}^3 h^2 .
		\end{equation*}
		We now turn to the description of the real part of $\lambda_{\h,\pm}$, which turns out to be much smaller than its real part.
		We first solve 
		\begin{equation} \label{eq:approximate}
			\epsilon_{\mu,\h}^+(\lambda,k) = 0 \, .
		\end{equation}
		Using Inequality~\eqref{eq:boundsmall}, we see that $\epsilon_{\mu,\h}$ and $\epsilon_{\mu,\h}^+$ have the same asymptotic expansions. As a consequence, Equation~\eqref{eq:approximate} has two solutions $\lambda^{\textnormal{app}}_\pm$ such that $\lambda^{\textnormal{app}}_\pm \sim i \CK^{1/2} M_0^{1/2}$. In view of Inequality~\eqref{eq:boundsmall}, we consider $\epsilon_{\mu,\h}^0$ as a perturbation and apply the implicit function theorem to obtain
		\begin{equation*}
			\re \lambda_{\h,\pm} \sim - i \widehat{K}(k) \, \frac{\epsilon_{\mu,\h}^0(\lambda^{\textnormal{app}}_\pm,k)}{\partial_\lambda \epsilon_{\mu,\h}^+(\lambda^{\textnormal{app}}_\pm,k)} \, ,
		\end{equation*}
		which gives the asymptotic expansion of $\re \lambda_{\h,\pm}$.
	
		Let us now turn to medium $k$. We note that $\epsilon_{\mu,\h} \to 1$ as $\n{\lambda} \to \infty$. As $\epsilon_{\mu,\h}$ does not vanish on the imaginary axis, this means that possible eigenvalues stay away from the imaginary axis, and have a real part smaller than $- \sigma$ for some positive and fixed $\sigma$.
	\end{proof}


\section{Study of the Green functions}



\subsection{Green functions in Fourier}


	We rewrite Equation~\eqref{division} under the form 
	\begin{equation*}
		\widetilde{\rho}(\lambda,k) = \widetilde{\rho^\circ}(\lambda,k) + \Bigl( \frac{1}{\epsilon_{\mu,\h}(\lambda,k)} - 1 \Bigr) \,\widetilde{\rho^\circ}(\lambda,k) \, .
	\end{equation*}
	For a contour $\Gamma\subseteq \CC$ to be precised later, we introduce $\widehat{G}_\h$, the inverse Laplace transform of $\epsilon_{\mu,\h}^{-1}-1$, given by
	\begin{equation}\label{eq:def_Gk}
		\widehat{G}_\h(t,k) = \frac{1}{i} \int_\Gamma \frac{1-\epsilon_{\mu,\h}(\lambda,k)}{\epsilon_{\mu,\h}(\lambda,k)} \, e^{2\pi\lambda t} \d\lambda \, ,
	\end{equation}
	in such a way that
	\begin{equation}\label{eq:inversion2}
		\widehat{\rho}(t,k) = \widehat{\rho^\circ}(t,k) + \int_0^t \widehat{G}_\h(t - \tau,k) \,\widehat{\rho^\circ}(\tau,k) \d\tau\, .
	\end{equation}
	For small $\n{k}$, we will take a contour on the left of the two roots $\lambda_\pm(k)$, and for medium of large $\n{k}$, a contour on the right of all the (possible) roots of $\eps_\mu$. To ensure a smooth transition, we choose a function $\phi(k)$ of small compact support, which vanishes near $k = 0$ and write $\hatG_\h = \phi \,\hatG_\h + \lt(1 - \phi\rt) \hatG_\h$. We use a contour on the left of the roots $\lambda_\pm(k)$ for $\phi \,\hatG_\h$ and on the right of these roots for $\lt(1 - \phi\rt) \hatG_\h$.
 
	Recall $\vartheta = i\lambda/|k|$. For $\n{\vartheta} \le R$, namely for $\n{\lambda} \le R \n{k}$ we follow the contour $\vartheta \in \lt[-R,R\rt] - i\, \alpha$, namely $\lambda \in \Gamma_1$ with
	\begin{equation*}
		\Gamma_1 = [\lambda_2,\lambda_3] \quad \text{ with } \quad\lambda_2 = - i \,R \n{k} - \alpha \n{k} \quad \text{ and } \quad \lambda_3 = i \,R \n{k} - \alpha \n{k} .
	\end{equation*}
	For $\n{\vartheta} \geq R$, we choose the contour
	\begin{equation*}
		\Gamma_2 = \lambda_2 - e^{i \beta} \,\R_+ \quad \text{ and } \quad \Gamma_3 = \lambda_3 - e^{- i \beta}\, \R_+
	\end{equation*}
	for some $\beta \in (0,\pi/2)$ close to $\pi / 2$. We choose $\beta$ to avoid the poles $ \lambda_{\h,\pm}(k)$ of $\epsilon_{\mu,\h}^{-1}$. We recall that $\lambda_{\h,\pm}(k) \sim \pm i\sqrt{\CK\,M_0}$ with $\Re{\lambda_\pm(k)} \sim {}(k)$. 
	
	If $k$ is small and $\phi(k) = 1$, we choose $\alpha$ such that these two poles are on the right of the contour $\Gamma = \Gamma_2 \cup \Gamma_1 \cup \Gamma_3$, which leads to the residues
	\begin{equation*}
		\hatGosc_{\h,\pm}(t,k) = \mathrm{Res}_{\lambda_{\h,\pm}(k)}\!\lt( \epsilon_{\mu,\h}^{-1}(\lambda,k)\rt) e^{2\pi\lambda_{\h,\pm}(k) t}
		= a_{\h,\pm}(k)\, e^{2\pi\lambda_{\h,\pm}(k) t} \, . 
	\end{equation*}%
	That is, we have more precisely
	\begin{equation}\label{eq:hatG_expansion}
		\hatG_\h = \hatGosc_{\h,+} + \hatGosc_{\h,-} + \hatGr_\h
	\end{equation}
	with
	\begin{equation}\label{eq:def_hatGr}
		\hatGr_\h(t,k) = \frac{1}{i} \int_\Gamma \frac{1-\epsilon_{\mu,\h}(\lambda,k)}{\epsilon_{\mu,\h}(\lambda,k)} 
		\, e^{2\pi\lambda t} \d\lambda \, .
	\end{equation}
	We note that in the integral
	\begin{equation*}
		\int_\Gamma e^{2 \pi \lambda t} \d\lambda
	\end{equation*}
	we can move the contour $\Gamma$ towards the left. As a consequence, this integral decays faster than any exponential. It remains thus to bound
	\begin{equation*}
		\widehat{H}_\h^\textnormal{r}(t,k) = \frac{1}{i} \int_\Gamma \frac{e^{2\pi\lambda t}}{\epsilon_{\mu,\h}(\lambda,k)} \d\lambda \, .
	\end{equation*}
	If $k$ is small and $\phi(k) \ne 1$, we use two contours, one on the left of the roots to bound $\phi \, \hatG_\h$ 
	and one on their right to bound $\lt(1 - \phi\rt) \hatG_\h$.
	
	For medium or large $k$, we choose $\alpha$ such that the contour is on the right of all the poles of $\eps_\mu$ (if such poles still exist, since we have only proved their existence for small $k$). In this case, we simply have $\hatG_\h = \hatGr_\h$.

\medskip
\begin{center}
	\begin{tikzpicture}[scale=0.25]
		
		\draw[->,line width = 0.8pt](-10,0)--(10,0) node[above]{$\R$};
		
		\draw[->,line width = 0.8pt](0,-10)--(0,10) node[right]{$i\R$};
		
		

		
		\draw [line width = 1, red] (-3,-5)--(-3,5) node[midway, below left, color=red, align=center]{\footnotesize $\Gamma_1$};
		\draw [line width = 1, vertFonce] (-3,5)--(-6,10) node[midway, left, color=vertFonce, align=center]{\footnotesize $\Gamma_2$}; 
		\draw [line width = 1, vertFonce] (-3,-5)--(-6,-10) node[midway, left, color=vertFonce, align=center]{\footnotesize $\Gamma_3$}; 
		
		\foreach \x in {1, ..., 9}
			\fill[color=red] (-3*\x/10,-5) circle [radius=1pt];
		\foreach \x in {1, ..., 9}
			\fill[color=red] (-3*\x/10,5) circle [radius=1pt];


		\fill[color=black] (0,5)  circle[radius=4pt] node[right]{\footnotesize$iR\n{k}$};
		\fill[color=black] (-3,0)  circle[radius=4pt] node[above left]{\footnotesize$-\alpha\n{k}$};
		
		\fill[color=red] (-3,-5) circle[radius=4pt] node[left]{\footnotesize$\lambda_2$};
		\fill[color=red] (-3,5)  circle[radius=4pt] node[left]{\footnotesize$\lambda_3$};

		\fill[color=blue] (-1.2,7)  circle[radius=4pt] node[above]{\footnotesize$\lambda_+$};
		\fill[color=blue] (-1.2,-7)  circle[radius=4pt] node[below]{\footnotesize$\lambda_-$};
	\end{tikzpicture}
\end{center}
\medskip

	To bound $\Gosc_{\h,\pm}$ we need to study $a_{\h,\pm}(k)$.
	\begin{lem}
		We have, as $k \to 0$,
		\begin{equation}\label{eq:a_expression}
			a_{\h,\pm}(k) =\pm i\pi\sqrt{\CK M_0} \mp\frac{3i\pi M_{2,\parallel}}{2\sqrt{\CK}\,M_0^{3/2}}\n{k}^2 + O(\n{k}^4) \, ,
		\end{equation}
		\begin{equation*}
			\n{a_{\h,\pm}(k) - a_{0,\pm}(k)} \lesssim \n{k}^4 h^2.
		\end{equation*}
	\end{lem}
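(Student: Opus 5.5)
We compute the residue directly from the small-$\n{k}$ expansion of the dielectric function obtained in Section~\ref{sec:dielsmall}. Since $\lambda_{\h,\pm}(k)$ is a simple zero of $\epsilon_{\mu,\h}(\cdot,k)$ (we check simplicity along the way), we have
\begin{equation*}
	a_{\h,\pm}(k) = \frac{1}{\partial_\lambda \epsilon_{\mu,\h}(\lambda_{\h,\pm}(k),k)} \, ,
\end{equation*}
up to the normalisation constant coming from the factor $\frac{1}{i}$ and the $2\pi$ in the inverse Laplace formula~\eqref{eq:def_Gk}. This constant produces the factor $\pi$ in~\eqref{eq:a_expression}, and we track it explicitly. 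The proof thus reduces to an expansion of $\partial_\lambda\epsilon_{\mu,\h}$ at the root.

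\textbf{Step 1.} We write $\epsilon_{\mu,\h} = \epsilon_{\mu,\h}^+ + i\widehat K(k)\epsilon^0_{\mu,\h}$. By~\eqref{eq:boundsmall}, $\epsilon^0_{\mu,\h}$ and its $\lambda$-derivative are $O(\n{\vartheta}^{-N-1}) = O(\n{k}^{N+1})$ at $\vartheta = i\lambda_{\h,\pm}/\n{k}$, since $\n{\vartheta}\sim \sqrt{\CK M_0}/\n{k}$. The same holds after multiplication by $\widehat K(k)=\CK\n{k}^{-2}$, so this part contributes $O(\n{k}^4)$ once $N\ge 5$. The decay assumption with eight derivatives allows $N=6$.

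\textbf{Step 2.} We differentiate the asymptotic expansion of $\epsilon^+_{\mu,\h}$ in $\lambda$. The paper notes that the same Taylor argument, with the remainder $I_R$ controlled uniformly in $h$, gives an expansion of $\partial_\lambda\epsilon_{\mu,\h}$. This yields
\begin{equation*}
	\partial_\lambda\epsilon_{\mu,\h} = -\frac{2\CK M_0}{\lambda^3} + \frac{12\CK M_2\n{k}^2}{\lambda^5} + \frac{4\CK M_0 h^2\n{k}^4}{3\lambda^5} - \frac{30\CK M_4\n{k}^4}{\lambda^7} + O(\n{k}^5) \, .
\end{equation*}
We then substitute $\lambda=\lambda_{\h,\pm}(k) = \pm i\sqrt{\CK M_0}\,\bigl(1+\tfrac{3M_2}{2\CK M_0^2}\n{k}^2+O(\n{k}^4)\bigr)$. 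Expanding $\lambda^{-3}$ and $\lambda^{-5}$ to order $\n{k}^2$ gives $\partial_\lambda\epsilon_{\mu,\h} = \mp\frac{2i}{\sqrt{\CK M_0}}\bigl(1 + c\,\n{k}^2 + O(\n{k}^4)\bigr)$ for an explicit constant $c$ depending on $M_0,M_2,\CK$. Inverting and multiplying by the normalisation constant gives the leading term $\pm i\pi\sqrt{\CK M_0}$ and the $\n{k}^2$ coefficient in~\eqref{eq:a_expression}. Here $M_{2,\parallel}$ denotes the second moment in the direction of $k$; by radial symmetry it is the same in every direction.

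\textbf{Step 3.} We compare the expansions for $h>0$ and $h=0$. In the coefficients of $\partial_\lambda\epsilon_{\mu,\h}$, $h$ appears only through $(hk)^2$-terms, namely the $h^2\n{k}^4\lambda^{-5}$ term and $\eps(\vartheta)$. We also have $\n{\lambda_{\h,\pm}-\lambda_{0,\pm}}\lesssim \n{k}^4h^2$, and $\partial^2_\lambda\epsilon$ is bounded near the root. Hence
\begin{equation*}
	\n{\partial_\lambda\epsilon_{\mu,\h}(\lambda_{\h,\pm},k)-\partial_\lambda\epsilon_{\mu,0}(\lambda_{0,\pm},k)}\lesssim h^2\n{k}^4 \, .
\end{equation*}
Since both derivatives are bounded away from $0$, the bound on $a_{\h,\pm}-a_{0,\pm}$ follows. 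For the remainder terms, we use $D_{hk}\mu - D_0\mu = O(h^2\n{k}^2)$ in the weighted norms. This turns the $O(\n{k}^5)$ remainders into differences of size $O(h^2\n{k}^{7})$, or at worst $O(h^2\n{k}^4)$.

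\textbf{Main obstacle.} The delicate point is the uniformity of the remainders for $\partial_\lambda\epsilon$ in $h$, together with the non-holomorphic contribution $\epsilon^0_{\mu,\h}$ along the complex root. Since $\Re\lambda_{\h,\pm}<0$, $\vartheta$ has small negative imaginary part. We therefore rely on the holomorphic extension of $F$ to the conic domain $\mathcal D_\eta$ to evaluate $D_{hk}\mu$ at complex $\vartheta$. By Cauchy estimates on $\mathcal D_\eta$, it still decays like $\weight{\vartheta}^{-6}$, uniformly in $h\le1$.
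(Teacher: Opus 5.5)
Your proposal is correct and is essentially the paper's argument. The paper writes $a_{\h,\pm}=(\partial_\lambda\epsilon_{\mu,\h}(\lambda_{\h,\pm}(k),k))^{-1}$ and calls the rest a ``straightforward computation''; you carry out that expansion, including the $h^2\n{k}^4$ comparison via $\n{\lambda_{\h,\pm}-\lambda_{0,\pm}}\lesssim\n{k}^4h^2$. Your tracking of the factor $2\pi$ from $\frac1i\int_\Gamma\cdots e^{2\pi\lambda t}\d\lambda$, giving $a_{\h,\pm}=2\pi/\partial_\lambda\epsilon_{\mu,\h}(\lambda_{\h,\pm},k)$, is what actually produces the $\pi$ in~\eqref{eq:a_expression}, which the paper's one-line formula drops. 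The only slip is that \eqref{eq:boundsmall} with $\sabs{\nabla\mu}\lesssim\weight{v}^{-9}$ permits $N\le5$, not $N=6$; this is harmless, since $N=5$ is all your Step~1 needs.
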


	\begin{proof}
		By definition,
		\begin{equation*}
			a_{\h,\pm}(k) = \bigl( \partial_\lambda \epsilon_{\mu,\h}(\lambda_{\h,\pm}(k),k) \bigr)^{-1},
		\end{equation*}
		which, after straightforward computations, leads to Equation~\eqref{eq:a_expression} and to the estimates on $a_{h,\pm} - a_{0,\pm}$.
	\end{proof}
	
	We now bound $\hatGr_\h$.
	
	\begin{lem}\label{lem:estim_hatGr}
		For any $\h\in[0,1]$, it holds
		\begin{equation}\label{eq:hatGr1}
			\sabs{\hatGr_\h(t,k)} \leq C \, \tfrac{\n{k}^3}{\weight{k}^4} \, e^{-\sigma\n{k}t}
		\end{equation}
		for some $C,\sigma>0$ independent of $k$ and $\h$. Moreover,
		\begin{equation}\label{eq:splitr}
			\n{\hatGr_\h(t,k) - \hatGr_0(t,k)} \le C \, \tfrac{\n{k}^3}{\weight{k}^4} \, e^{-\sigma\n{k}t} \min(1, h^2 |k|^2) \, .
		\end{equation}
		In addition, for $t>0$, $k\neq0$, and every multi-index $\n{\alpha}\leq2$,
		\begin{align}\label{eq:Gr-symbol-derivatives}
			\n{\partial_k^\alpha\hatGr_\h(t,k)}
			&\leq C \, \frac{\n{k}^{3-\n{\alpha}}}{\weight{k}^4}
			\weight{\n{k}t}^{\n{\alpha}}e^{-\sigma\n{k}t},
			\\\label{eq:Gr-difference-symbol-derivatives}
			\n{\partial_k^\alpha
			\bigl(\hatGr_\h-\hatGr_0\bigr)(t,k)}
			&\leq C \, h^2\, \frac{\n{k}^{5-\n{\alpha}}}{\weight{k}^4}
			\weight{\n{k}t}^{\n{\alpha}} e^{-\sigma\n{k}t} .
		\end{align}
	\end{lem}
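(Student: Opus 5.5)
We work directly from the contour representation~\eqref{eq:def_hatGr}. The polynomially decaying part $\int_\Gamma e^{2\pi\lambda t}\d\lambda$ can be discarded as explained before the statement, so it suffices to bound
\begin{equation*}
	\frac{1}{i}\int_\Gamma \frac{1-\epsilon_{\mu,\h}}{\epsilon_{\mu,\h}}\, e^{2\pi\lambda t}\d\lambda
	= \frac{1}{i}\int_\Gamma \frac{-\widehat{K}(k)\,\tilde\epsilon_{\mu,\h}(\lambda,k)}{1+\widehat{K}(k)\,\tilde\epsilon_{\mu,\h}(\lambda,k)}\, e^{2\pi\lambda t}\d\lambda ,
\end{equation*}
using~\eqref{eq:def_tildeps}. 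We rescale $\lambda = -i\n{k}\vartheta$, so that $\d\lambda = \n{k}\,\d\vartheta$ and $e^{2\pi\lambda t} = e^{-2i\pi \n{k} t\vartheta}$. On $\Gamma_1$ we have $\Im\vartheta=-\alpha$, which gives the factor $e^{-2\pi\alpha\n{k}t}$. On $\Gamma_2,\Gamma_3$ the real part of $\lambda$ decreases linearly, so the same factor holds with an additional integrable decay. This fixes $\sigma=\alpha$, shrunk if necessary so that for medium and large $k$ the contour stays to the right of the roots, using Section~\ref{sec:dielsmall}: there are no roots with $\Re\lambda>-\alpha\n{k}$ for large $k$, and roots satisfy $\Re\lambda<-\sigma$ for medium $k$. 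The $\phi$ splitting is handled identically with two contours.

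\textbf{Size factor.} The prefactor $\n{k}^3/\weight{k}^4$ comes from three sources. The factor $\n{k}$ comes from $\d\lambda$. For $\n{k}\ge 1$, $\widehat{K}(k)=\CK\n{k}^{-2}$ together with the uniform boundedness of $\tilde\epsilon_{\mu,\h}$ and $\n{\epsilon_{\mu,\h}}\ge 1/2$ gives $\n{k}^{-1}$; one further power comes from integrability in $\vartheta$, since $\n{\tilde\epsilon_{\mu,\h}}\lesssim\weight{\vartheta}^{-2}$ by~\eqref{eq:expansion_tildeps} and~\eqref{eq:boundsmall}. For small $k$ we need $\n{k}^3$, and the bound for $\n{k}\geq 1$ does not supply it. On $\Gamma_1$, where $\n{\vartheta}\le R$, the lower bound~\eqref{eq:bound_convexity_mu_2} gives
\begin{equation*}
	\n{\frac{\widehat{K}\,\tilde\epsilon_{\mu,\h}}{1+\widehat{K}\,\tilde\epsilon_{\mu,\h}}}
	= \n{\frac{1}{\widehat{K}^{-1}\tilde\epsilon_{\mu,\h}^{-1}+1}}
	\lesssim 1 ,
\end{equation*}
so this piece only contributes $\n{k}$ times the length $R$. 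To reach $\n{k}^3$, we would write $\frac{1-\epsilon}{\epsilon} = -1 + \frac{1}{\epsilon}$ and use $\frac{1}{\epsilon} = \frac{\n{k}^2}{\n{k}^2+\CK\,\tilde\epsilon}$, so that $\frac{1}{\epsilon} = O(\n{k}^2)$ on the bounded part. The constant $-1$ integrates against $e^{2\pi\lambda t}$ over a closed deformation and is negligible. Together with $\d\lambda=\n{k}\,\d\vartheta$, this yields $\n{k}^3$. On the unbounded rays the expansion~\eqref{eq:expansion_tildeps} gives $\epsilon\approx 1+\CK/\lambda^2$, whose inverse is bounded away from the poles $\lambda_{\h,\pm}$ (which were chosen to lie off $\Gamma$), and decays like $\lambda^{-2}$ after subtracting $1$. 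This part has to be checked carefully, since the claimed smallness $\n{k}^3$ at large $\vartheta$ relies on the $e^{2\pi\lambda t}$ oscillation and on the rays beginning at $\n{\lambda}\sim R\n{k}$.

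\textbf{Differences and derivatives.} For~\eqref{eq:splitr} we use
\begin{equation*}
	\epsilon_{\mu,\h}^{-1}-\epsilon_{\mu,0}^{-1} = \frac{\epsilon_{\mu,0}-\epsilon_{\mu,\h}}{\epsilon_{\mu,\h}\,\epsilon_{\mu,0}}
\end{equation*}
together with $D_q\mu - D_0\mu = O(\n{q}^2)$ and the estimates $\epsilon^{\rm pv}_{\mu,\h}-\epsilon^{\rm pv}_{\mu,0},\ \epsilon^0_{\mu,\h}-\epsilon^0_{\mu,0} = O((h\n{k})^{2\theta})$, with $\theta=1$ for $h\n{k}\le1$ and $\theta=0$ otherwise. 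The same contour bounds then apply. The derivative bounds~\eqref{eq:Gr-symbol-derivatives} follow by differentiating under the integral in the rescaled variable. Each $\partial_k$ either hits $e^{-2i\pi\n{k}t\vartheta}$, producing $t\,\n{\vartheta} = \n{k}^{-1}(\n{k}t)\n{\vartheta}$, or hits $\widehat{K}$ or $D_{hk}\mu$, producing $\n{k}^{-1}$. Either way we lose $\n{k}^{-1}\weight{\n{k}t}$, and the extra powers of $\vartheta$ are absorbed by the decay in $\vartheta$. This is where the derivative assumptions on $\mu$ (up to order 8) and the holomorphy of $F$ on $\mathcal D_\eta$ are used, since they make $\tilde\epsilon$ smooth in $k$ on the tilted contour.

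\textbf{Main obstacle.} The uniform lower bound on $\n{\epsilon_{\mu,\h}}$ along the entire contour for medium $k$, uniformly in $h\in[0,1]$, is where the argument is most delicate. We would obtain it by compactness: continuity in $(h,k,\lambda)$, non-vanishing of $\epsilon_{\mu,\h}$ on the relevant region from the lemma on $\Re\lambda\ge0$, and the fact that $\epsilon_{\mu,\h}\to1$ as $\n{\lambda}\to\infty$.
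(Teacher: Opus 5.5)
Your treatment of $\Gamma_1$ and of non-small frequencies matches the paper: on $\Gamma_1$ you write $\frac{1-\epsilon}{\epsilon}=-1+\frac1\epsilon$ with $\frac1\epsilon=O(\n{k}^2)$. There is, however, a genuine gap at small $r=\n{k}$ on the rays $\Gamma_2\cup\Gamma_3$, which is exactly the step you leave ``to be checked''. On the rays $\frac1\epsilon\to1$ at infinity, so the $-1$ has to be kept there, and by~\eqref{eq:Lh-ray-expansion}
\begin{equation*}
	\frac{1-\epsilon_{\mu,\h}}{\epsilon_{\mu,\h}}
	= -\frac{\CK M_0}{\lambda^2+\CK M_0} + \frac{b_\h(r)}{(\lambda^2+\CK M_0)^2} + O\Bigl(\frac{r^4}{\n{\lambda}^2}\Bigr) ,
	\qquad \n{\lambda}\geq R\,r \, .
\end{equation*}
This is of size one for $Rr\leq\n{\lambda}\leq1$. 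Taking absolute values therefore bounds the ray contribution only by $O(e^{-\sigma rt})$, not by $O(r^3e^{-\sigma rt})$. The oscillation of $e^{2\pi\lambda t}$ cannot rescue this, because the bound must hold as $t\to0^+$, where the exponential is essentially $1$.

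The missing idea is an exact algebraic cancellation. The first two terms above equal $-P_{\h,r}(\lambda)$, where $P_{\h,r}$ is the rational function~\eqref{eq:rational-surgery-correction}. One has $\int_\Gamma e^{2\pi\lambda t}P_{\h,r}\d\lambda=0$ for $t>0$, because the only poles $\pm i\sqrt{\CK M_0}$ lie to the right of $\Gamma$ and $P_{\h,r}$ decays like $\n{\lambda}^{-2}$. Both terms are needed. The first removes the $O(1)$ part; the second removes an $O(r^2)$ part that would otherwise give only $r^2$ after integrating over $Rr\leq\n\lambda\leq1$. The corrected integrand $Q_{\h,r}$ is $O(r^2)$ on $\Gamma_1$; your $-1$ trick is the $\Gamma_1$ trace of this, since $P_{\h,r}=1+O(r^2)$ there. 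On the rays it is $O(r^4\n{\lambda}^{-2})$, see~\eqref{eq:post-residue-central-bound} and~\eqref{eq:post-residue-ray-bound}. Then $\int_{Rr}^\infty r^4s^{-2}\d s=r^3/R$ closes~\eqref{eq:hatGr1}.

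The same omission breaks the other three estimates. For~\eqref{eq:splitr}, the quotient $\frac{\epsilon_{\mu,0}-\epsilon_{\mu,\h}}{\epsilon_{\mu,\h}\epsilon_{\mu,0}}$ has on the rays the leading part $\frac{b_\h(r)-b_0(r)}{(\lambda^2+\CK M_0)^2}$. This is of size $h^2r^4$ for $Rr\leq\n\lambda\leq1$, so absolute values give $h^2r^4e^{-\sigma rt}$, one power of $r$ short of the required $h^2r^5e^{-\sigma rt}$. It is removed only by also subtracting $P_{\h,r}-P_{0,r}$, see~\eqref{eq:difference-rational-correction}, which again has zero contour integral. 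Likewise, differentiating the uncorrected integrand in $r$ at fixed $z=\lambda/r$ loses the corresponding powers of $r$ in~\eqref{eq:Gr-symbol-derivatives} and~\eqref{eq:Gr-difference-symbol-derivatives}. The paper instead differentiates $\mathscr Q_\h(r,z)=Q_{\h,r}(rz)$ in the rescaled representation~\eqref{eq:scaled-Gr-contour}, using~\eqref{eq:differentiated-Q-low}--\eqref{eq:differentiated-Q-difference-low}. Once this correction is in place, your derivative bookkeeping (each $\partial_r$ costing $r^{-1}\weight{rt}$) and your high-frequency and annulus arguments are essentially those of the paper.
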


	\begin{proof}[Proof of Inequality~\eqref{eq:hatGr1}]
		The estimates cannot be obtained by taking absolute values of the integral of $\epsilon_{\mu,\h}^{-1}$ on $\Gamma$: this would miss a cancellation between the integral and the poles. Instead we have to use a particular decomposition of $\epsilon_{\mu,\h}^{-1}$. For the classical setting, we refer to~\cite[Lemma~2.7]{bedrossian_linearized_2022} (low frequencies) and~\cite[Lemma~2.8]{bedrossian_linearized_2022} (non-small frequencies). See also~\cite[Section~5.2.3]{han-kwan_linearized_2021} and~\cite[Theorem~1.2~(ii), especially~(1.25)]{ionescu_stability_2023}.

		Let $r := \n{k}$. We first study the case $0<r\leq r_0$ where $r_0$ is small enough. 
		The expansion~\eqref{eq:expansion_tildeps} leads to 
		\begin{equation}\label{eq:Lh-ray-expansion}
			1-\epsilon_{\mu,\h}(\lambda,k) = - \frac{\CK M_0}{\lambda^2} +\frac{b_\h(r)}{\lambda^4} + \frac{r^4}{\lambda^6} \, \zeta_{\h,r}(\lambda) \, ,
		\end{equation}
		where
		\begin{equation*}
			b_\h(r) := 3\CK M_{2,\parallel} \, r^2 +\frac{\CK M_0}{4} \, h^2r^4 \, .
		\end{equation*}
		The functions $\zeta_{\h,r}$ are holomorphic in the conic region~\eqref{eq:def_conic}, uniformly bounded on $\Gamma_2$ and $\Gamma_3$, and decay at infinity. 
		We note that
		\begin{equation}\label{eq:zeta-quantum-classical-difference}
			\n{\zeta_{\h,r}(\lambda)-\zeta_{0,r}(\lambda)} \leq C\, h^2r^2 .
		\end{equation}
		The same calculation may be differentiated at fixed $z=\lambda/r$. Taking into account
		the decay along $\Gamma_2$ and $\Gamma_3$, we have, for $0\leq j\leq2$
		\begin{align}\label{eq:zeta-radial-derivatives}
			\n{\partial_r^j\zeta_{\h,r}(rz)} &\leq C_j \, r^{-j} \weight{z}^{-1} ,
			\\\label{eq:zeta-difference-radial-derivatives}
			\n{\partial_r^j\bigl(\zeta_{\h,r}-\zeta_{0,r}\bigr)(rz)} &\leq C_j \, h^2 r^{2-j} \weight{z}^{-1} .
		\end{align}
		Following~\cite[Lemma~2.7]{bedrossian_linearized_2022}, we introduce the rational function
		\begin{equation}\label{eq:rational-surgery-correction}
			P_{\h,r}(\lambda) :=\frac{\CK M_0}{\lambda^2+\CK M_0} - \frac{b_\h(r)}{(\lambda^2+\CK M_0)^2}
		\end{equation}
		and the new integrand
		\begin{equation}\label{eq:def-corrected-post-residue-integrand}
			Q_{\h,r}(\lambda) := \frac{1-\epsilon_{\mu,\h}(\lambda,k)}{\epsilon_{\mu,\h}(\lambda,k)} + P_{\h,r}(\lambda) \, .
		\end{equation}
		The only poles of $P_{\h,r}$ are $\lambda = \pm i\sqrt{\CK M_0}$, which lie on the right of $\Gamma$. Moreover $P_{\h,r}(\lambda) = O(\n{\lambda}^{-2})$ as $\n{\lambda}\to\infty$. Therefore, for $t>0$, closing the contour in the left half-plane gives
		\begin{equation}\label{eq:rational-correction-zero-integral}
			\int_\Gamma e^{2\pi\lambda t}P_{\h,r}(\lambda) \d\lambda=0 \, .
		\end{equation}
		Therefore, it follows from equations~\eqref{eq:def_hatGr} and~\eqref{eq:def-corrected-post-residue-integrand} that
		\begin{equation}\label{eq:Gr-corrected-contour}
			\hatGr_\h(t,k) =\frac1i\int_\Gamma e^{2\pi\lambda t} \, Q_{\h,r}(\lambda) \d\lambda \, .
		\end{equation}

		We now estimate $Q_{\h,r}$. On $\Gamma_1$, $\n{\lambda}\leq C\, r$. By~\eqref{eq:def_tildeps} and~\eqref{eq:bound_convexity_mu_2}, provided $r_0$ is small enough,
		\begin{equation*}
			\n{\epsilon_{\mu,\h}(\lambda,k)}
			\geq \CK \, r^{-2} \n{\widetilde\epsilon_{\mu,\h}(i\lambda/r,r)} - 1
			\gtrsim r^{-2}.
		\end{equation*}
		Hence
		\begin{equation*}
			\frac{1-\epsilon_{\mu,\h}(\lambda,k)}{\epsilon_{\mu,\h}(\lambda,k)} + 1 = \frac{1}{\epsilon_{\mu,\h}(\lambda,k)} = O(r^2) \, .
		\end{equation*}
		On the other hand,
		\begin{equation*}
			P_{\h,r}-1 = - \frac{\lambda^2}{\lambda^2+\CK M_0} - \frac{b_\h(r)}{(\lambda^2+\CK M_0)^2} =O(r^2) \, .
		\end{equation*}
		Adding the last two identities proves the existence of a constant $C$ independent of $k$ and $\h$ such that for any $\lambda\in\Gamma_1$,
		\begin{equation}\label{eq:post-residue-central-bound}
			\n{Q_{\h,r}(\lambda)}\leq C\, r^2\, .
		\end{equation}

		On $\Gamma_2$ and $\Gamma_3$, we compute $Q_{h,r}$ explicitly. Recalling Equation~\eqref{eq:rational-surgery-correction}, we have
		\begin{align*}
			Q_{\h,r}(\lambda)
			&=\frac{1-\epsilon_{\mu,\h}}{\epsilon_{\mu,\h}}
			+\frac{\CK M_0}{\lambda^2+\CK M_0}
			-\frac{b_\h(r)}{(\lambda^2+\CK M_0)^2}
			=\frac{N_{\h,r}(\lambda)}{D_{\h,r}(\lambda)} \, ,
		\end{align*}
		where $\epsilon_{\mu,\h} = \epsilon_{\mu,\h}(\lambda,k)$ and
		\begin{align}\label{eq:Dh-exact-definition}
			D_{\h,r}(\lambda) &:= \epsilon_{\mu,\h} \lt(\lambda^2+\CK M_0\rt)^2,
			\\\notag
			N_{\h,r}(\lambda) &:= \lt(1-\epsilon_{\mu,\h}\rt) \lt(\lambda^2+\CK M_0\rt)^2 +\epsilon_{\mu,\h} \lt(\CK M_0(\lambda^2+\CK M_0)-b_\h(r)\rt)
			\\\label{eq:Nh-exact-definition}
			&= \lt(\lambda^2\lt(1-\epsilon_{\mu,\h}\rt)+\CK M_0\rt) \lt(\lambda^2+\CK M_0\rt) - \epsilon_{\mu,\h} \, b_\h(r) \, .
		\end{align}
		We first study $N_{\h,r}$. Substituting expansion~\eqref{eq:Lh-ray-expansion} into~\eqref{eq:Nh-exact-definition} gives
		\begin{align*}
			N_{\h,r}
			&=\left(\frac{b_\h}{\lambda^2}
			+\frac{r^4\zeta_{\h,r}}{\lambda^4}\right)
			(\lambda^2+\CK M_0)
			-b_\h\left(1+\frac{\CK M_0}{\lambda^2}
			-\frac{b_\h}{\lambda^4}
			-\frac{r^4\zeta_{\h,r}}{\lambda^6}\right)
			\\
			&=\frac{r^4\, \zeta_{\h,r}}{\lambda^2}
			+\frac{\CK M_0\, r^4\, \zeta_{\h,r}+b_\h(r)^2}{\lambda^4}
			+\frac{b_\h(r)\, r^4\, \zeta_{\h,r}}{\lambda^6} \, .
		\end{align*}
		Uniformly for $h\in[0,1]$ and $0<r\leq r_0\leq 1$, we have $\n{b_\h(r)}\leq C\, r^2$ and $\n{\zeta_{\h,r}(\lambda)}\leq C$, which leads to
		\begin{equation}\label{eq:Nh-explicit-ray-bound}
			\n{N_{\h,r}(\lambda)} \leq
			\begin{cases}
				C_R\, r^4 \n{\lambda}^{-4}, & \text{ if } R\, r \leq \n{\lambda}\leq 1 \, ,
				\\
				C\, r^4 \n{\lambda}^{-2}, & \text{ if } \n{\lambda} \geq 1 \, ,
			\end{cases}
		\end{equation}
		with $C_R = C \lt(1+R^{-2}\rt)$.

		It remains to study $D_{\h,r}$. By Equation~\eqref{eq:Lh-ray-expansion}, we can write, on $\Gamma_2$ and $\Gamma_3$,
		\begin{equation*}
			\epsilon_{\mu,\h}(\lambda,k) = \frac{\lambda^2+\CK M_0}{\lambda^2} + \mathcal E_{\h,r}(\lambda) \, ,
		\end{equation*}
		where $\mathcal E_{\h,r}$ is a function of $\lambda$ which satisfies, since $\n{b_\h(r)}\leq C \, r^2$ and $\n{\zeta_{\h,r}(\lambda)}\leq C$,
		\begin{equation*}
			\n{\mathcal E_{\h,r}(\lambda)}
			\leq C\left(\frac{r^2}{\n{\lambda}^4} + \frac{r^4}{\n{\lambda}^6}\right).
		\end{equation*}
		If $R \, r\leq \n{\lambda} \leq 1$, the rays stay uniformly away from $\pm i\sqrt{\CK M_0}$, and hence $\n{\lambda^2+\CK M_0}\geq c_0$ for some constant $c_0 > 0$. Therefore,
		\begin{equation*}
			\frac{\n{\mathcal E_{\h,r}(\lambda)}}{\n{\lambda^2+\CK M_0}/\n{\lambda}^2} \leq C\lt(R^{-2}+R^{-4}\rt).
		\end{equation*}
		If $\n{\lambda}\geq1$, the geometry of the rays instead gives $\n{\lambda^2+\CK M_0}\geq c_0\n{\lambda}^2$, while $\n{\mathcal E_{\h,r}(\lambda)}\leq C\lt(r^2+r^4\rt)$. Choosing first $R$ sufficiently large and then $r_0$ sufficiently small, the reverse triangle inequality consequently yields
		\begin{equation}\label{eq:ray-denominator-comparison}
			\n{\epsilon_{\mu,\h}} \geq \frac{\n{\lambda^2+\CK M_0}}{2 \n{\lambda}^2} \, .
		\end{equation}
		By Equation~\eqref{eq:Dh-exact-definition}, this implies
		\begin{equation}\label{eq:Q-denominator-explicit-lower-bound}
			\n{D_{\h,r}(\lambda)} \geq \frac{\n{\lambda^2+\CK M_0}^3}{2 \n{\lambda}^2} \, .
		\end{equation}
		On $\Gamma_2$ and $\Gamma_3$, which stay away from $\pm i\sqrt{\CK M_0}$, one has $\n{\lambda^2+\CK M_0}\geq c$ for $\n{\lambda}\leq1$ and $\n{\lambda^2+\CK M_0}\geq c\n{\lambda}^2$ for $\n{\lambda}\geq 1$. Dividing~\eqref{eq:Nh-explicit-ray-bound} by~\eqref{eq:Q-denominator-explicit-lower-bound} therefore gives bounds of order, respectively, $r^4\n{\lambda}^{-2}$ and $r^4\n{\lambda}^{-6}$. In particular, for any $\lambda\in\Gamma_2\cup\Gamma_3$ such that $\n{\lambda}\geq R \, r$
		\begin{equation}\label{eq:post-residue-ray-bound}
			\n{Q_{\h,r}(\lambda)} \leq C_R \, \frac{r^4}{\n{\lambda}^2} \, .
		\end{equation}
		All constants above are uniform in $\h$. Indeed, recall that $F$ denotes the holomorphic extension of the one-dimensional velocity marginal~\eqref{eq:marginal}. Its centered difference with step $hr$ is
		\begin{equation*}
			D_{hr}F(z) := \frac{F(z+hr/2)-F(z-hr/2)}{hr} =\frac12\int_{-1}^{1}F'(z+hrs/2)\d s,
		\end{equation*}
		with the convention $D_0F=F'$. Thus $D_{hr}F$ inherits uniformly the holomorphy and thin-tail bounds of $F'$. This proves that the constants in~\eqref{eq:Lh-ray-expansion}--\eqref{eq:post-residue-ray-bound} are independent of $h\in[0,1]$.

		Since $\Gamma_1$ has length $O(r)$ and $\re\lambda\leq-\sigma\, r$ on the full contour, we obtain for any $r\in(0,r_0]$,
		\begin{equation*}
			\n{\hatGr_\h(t,k)} \leq C \, e^{-\sigma rt} \lt(r\,r^2 + \int_{Rr}^{\infty}\frac{r^4}{s^2} \d s\rt) \leq C \, r^3 \, e^{-\sigma rt}.
		\end{equation*}
		We next treat non-small frequencies as in~\cite[Lemma~2.8]{bedrossian_linearized_2022}. If $r\geq r_0^{-1}$, there are no poles in $\Re\lambda\geq-\sigma\, r$, and we deform to the vertical line
		\begin{equation*}
			\lambda=-\sigma\, r+i\,\omega\, , \qquad \text{ with } \omega\in\R \, .
		\end{equation*}
		The analytic decay of $D_{hr}F$, the factor $\widehat K(k)=\CK\, r^{-2}$, and the uniform denominator bound on this line give
		\begin{equation}\label{eq:non-small-resolvent-integrand}
			\n{\frac{1-\epsilon_{\mu,\h}(-\sigma r+i\omega)}{\epsilon_{\mu,\h}(-\sigma r+i\omega)}} \leq \frac{C}{r^2+\omega^2} \, .
		\end{equation}
		Therefore for any $r\geq r_0^{-1}$,
		\begin{equation*}
			\n{\hatGr_\h(t,k)} \leq C\, e^{-\sigma rt} \int_\R \frac{\d\omega}{r^2+\omega^2} = C \, r^{-1} e^{-\sigma rt},
		\end{equation*}
		On the compact annulus $r_0\leq r\leq r_0^{-1}$, uniform stability gives the same denominator bound and analytic decay gives an integrable majorant in $\omega$. Hence
		\begin{equation*}
			\n{\hatGr_\h(t,k)}\leq C \, e^{-\sigma rt}
		\end{equation*}
		there. Since $r^3/\weight{r}^4$ is comparable to $r^3$ for $r\leq r_0$, to $r^{-1}$ for $r\geq r_0^{-1}$, and to a positive constant on the intervening annulus, the three estimates prove Inequality~\eqref{eq:hatGr1}.
	\end{proof}

	\begin{proof}[Proof of inequalities~\eqref{eq:splitr}, \eqref{eq:Gr-symbol-derivatives} and~\eqref{eq:Gr-difference-symbol-derivatives}]
		Again, we write $r:=\n{k}$ and apply the same construction to the difference. The centered-difference identity gives, for every $0\leq j\leq2$,
		\begin{equation}\label{eq:centered-difference-endpoints-Gr}
			\n{\partial_z^j\bigl(D_{hr}F(z)-F'(z)\bigr)} \leq C\min\{1,h^2r^2\} \, .
		\end{equation}
		By Definition~\eqref{eq:rational-surgery-correction},
		\begin{equation}\label{eq:difference-rational-correction}
			P_{\h,r}(\lambda)-P_{0,r}(\lambda) = -\frac{\CK M_0 \, h^2r^4}{4\lt(\lambda^2+\CK M_0\rt)^2} \, .
		\end{equation}
		On $\Gamma_1$, Equations~\eqref{eq:def_tildeps}, \eqref{eq:bound_convexity_mu_2}, and~\eqref{eq:centered-difference-endpoints-Gr} imply
		\begin{equation*}
			\n{\epsilon_{\mu,\h}^{-1}} + \n{\epsilon_{\mu,0}^{-1}} \leq C \, r^2 \, ,
			\qquad
			\n{\epsilon_{\mu,\h}-\epsilon_{\mu,0}} \leq C \, r^{-2} \min\{1,h^2r^2\} \, .
		\end{equation*}
		It follows that
		\begin{align*}
			\n{\frac{1}{\epsilon_{\mu,\h}}-\frac{1}{\epsilon_{\mu,0}}}
			&=\frac{\n{\epsilon_{\mu,\h}-\epsilon_{\mu,0}}}{\n{\epsilon_{\mu,\h}\epsilon_{\mu,0}}}\leq C \, r^2 \min\{1,h^2r^2\} \, .
		\end{align*}
		Moreover, since $\n{\lambda}\lesssim r$ on $\Gamma_1$ and $r$ is sufficiently small, $\n{\lambda^2+\CK M_0}\geq c$. Hence, because $hr\leq1$ in the low-frequency regime,
		\begin{equation*}
			\n{P_{\h,r}(\lambda)-P_{0,r}(\lambda)} \leq C \, h^2r^4 = C \, r^2\min\{1,h^2r^2\} \, .
		\end{equation*}
		Finally, since
		\begin{equation*}
			Q_{\h,r}-Q_{0,r} =\frac{1}{\epsilon_{\mu,\h}} - \frac{1}{\epsilon_{\mu,0}} + P_{\h,r}-P_{0,r},
		\end{equation*}
		we conclude that, for every $\lambda\in\Gamma_1$,
		\begin{equation}\label{eq:difference-post-residue-central-bound}
			\n{Q_{\h,r}(\lambda)-Q_{0,r}(\lambda)} \leq C \, r^2 \min\{1,h^2r^2\} \, .
		\end{equation}
		On $\Gamma_2\cup\Gamma_3$, we have
		\begin{equation*}
			Q_{\h,r}-Q_{0,r} = \frac{N_{\h,r}-N_{0,r}}{D_{\h,r}} + \frac{N_{0,r} \lt(D_{0,r}-D_{\h,r}\rt)}{D_{\h,r} \, D_{0,r}} \, .
		\end{equation*}
		The explicit expression for $N_{\h,r}$ obtained above gives
		\begin{align*}
			N_{\h,r}-N_{0,r}
			&=\frac{r^4 \lt(\zeta_{\h,r}-\zeta_{0,r}\rt)}{\lambda^2}
			\\
			&\quad + \frac{\CK M_0 \, r^4 \lt(\zeta_{\h,r}-\zeta_{0,r}\rt)
			+ \lt(b_\h-b_0\rt) \lt(b_\h+b_0\rt)}{\lambda^4}
			\\
			&\quad + \frac{r^4\bigl(\lt(b_\h-b_0\rt)\zeta_{\h,r}
			+b_0 \lt(\zeta_{\h,r}-\zeta_{0,r}\rt)\bigr)}{\lambda^6} \, .
		\end{align*}
		Therefore, using the inequalities
		\begin{equation*}
			\n{b_\h-b_0} = \frac{\CK M_0}{4} \, h^2 r^4 ,
			\qquad \n{b_\h}+ \n{b_0} \leq C \, r^2 ,
			\qquad \n{\zeta_{\h,r}-\zeta_{0,r}} \leq C \, h^2 r^2 ,
		\end{equation*}
		leads to
		\begin{equation}\label{eq:Nh-quantum-classical-difference}
			\n{N_{\h,r}-N_{0,r}} \leq C \left(\frac{r^4}{\n{\lambda}^2} + \frac{r^4}{\n{\lambda}^4} + \frac{r^6}{\n{\lambda}^6}\right) \min(1,h^2r^2) \, .
		\end{equation}
		For the term involving the difference of the denominators, we use
		\begin{equation*}
			D_{0,r}-D_{\h,r} = \lt(\epsilon_{\mu,0}-\epsilon_{\mu,\h}\rt) \lt(\lambda^2+\CK M_0\rt)^2,
		\end{equation*}
		together with~\eqref{eq:Lh-ray-expansion}--\eqref{eq:zeta-quantum-classical-difference}. Since $\n{\lambda}\geq R\, r$ on $\Gamma_2\cup\Gamma_3$, applying
		Inequality~\eqref{eq:ray-denominator-comparison} to both denominators
		gives, for every $\lambda\in\Gamma_2\cup\Gamma_3$,
		\begin{equation}\label{eq:difference-post-residue-ray-bound}
			\n{Q_{\h,r}(\lambda) - Q_{0,r}(\lambda)} \leq \frac{C\,r^4}{\n{\lambda}^2} \min(1, h^2r^2) \, .
		\end{equation}
		Inequalities~\eqref{eq:difference-post-residue-central-bound} and~\eqref{eq:difference-post-residue-ray-bound} are the difference analogues of inequalities~\eqref{eq:post-residue-central-bound} and~\eqref{eq:post-residue-ray-bound}, and we similarly deduce for $0<r\leq r_0$,
		\begin{equation*}
			\n{\hatGr_\h(t,k)-\hatGr_0(t,k)}
			\leq C \min(1, h^2r^2)\, r^3 e^{-\sigma rt}.
		\end{equation*}
		At non-small frequencies, one observes that for $\lambda = - \sigma r + i\omega$
		\begin{equation*}
			\n{\frac{1-\epsilon_{\mu,\h}}{\epsilon_{\mu,\h}} - \frac{1-\epsilon_{\mu,0}}{\epsilon_{\mu,0}}} \leq \frac{C\min(1,h^2r^2)}{r^2+\omega^2} \, .
		\end{equation*}
		Integrating in $\omega$ 
		\begin{equation*}
			\n{\hatGr_\h(t,k)-\hatGr_0(t,k)}
			\leq C \, r^{-1} \, e^{-\sigma rt} \min(1,h^2 r^2) \, .
		\end{equation*}
		On the compact intermediate annulus the same conclusion follows from the resolvent identity, uniform stability, and an integrable analytic majorant in $\omega$. Since $r^3/\weight{r}^4$ is comparable to $r^3$ at low frequency and to $r^{-1}$ at high frequency, these three estimates prove Inequality~\eqref{eq:splitr}.

		We finish by differentiating the same contour representations. Since $\mu$ and $K$ are radial, $\hatGr_\h(t,k)$ depends on $k$ only through $r=\n{k}$. For low frequencies, we write $\lambda=rz$ on each of the three pieces of $\Gamma$ and set
		\begin{equation*}
			\mathscr Q_\h(r,z):=Q_{\h,r}(rz) \, .
		\end{equation*}
		The rescaled contour $\widetilde\Gamma=r^{-1}\Gamma$ is independent of $r$. Its central part $r^{-1} \Gamma_1$ is bounded, while on its two rays $r^{-1} \Gamma_2$ and $r^{-1} \Gamma_3$, $\Re z\leq-\sigma - c\n{z}$ for some $c>0$.

		We claim that, for $0<r\leq r_0$, $0\leq j\leq2$, and $z\in\widetilde\Gamma$,
		\begin{align}\label{eq:differentiated-Q-low}
			\n{\partial_r^j\mathscr Q_\h(r,z)} &\leq C \, r^{2-j} \weight{z}^{-2},
			\\ \label{eq:differentiated-Q-difference-low}
			\n{\partial_r^j \bigl(\mathscr Q_\h-\mathscr Q_0\bigr)(r,z)} &\leq C \, h^2 r^{4-j} \weight{z}^{-2} .
		\end{align}
		We first prove these estimates for $j=0$. On the two rays of the rescaled contour, the change of variables $\lambda = r z$ and Inequality~\eqref{eq:post-residue-ray-bound} give
		\begin{equation*}
			\n{\mathscr Q_\h(r,z)} =\n{Q_{\h,r}(rz)} \leq C \, \frac{r^4}{\n{rz}^2} = C \, r^2 \n{z}^{-2} .
		\end{equation*}
		Similarly, Inequality~\eqref{eq:difference-post-residue-ray-bound} gives
		\begin{align*}
			\n{\mathscr Q_\h(r,z)-\mathscr Q_0(r,z)} &\leq C \, r^2\n{z}^{-2} \min\{1,h^2r^2\} \leq C \, h^2 r^4 \n{z}^{-2},
		\end{align*}
		where we used $hr\leq1$ in the low-frequency regime. Thus, combining inequalities~\eqref{eq:post-residue-central-bound} and~\eqref{eq:difference-post-residue-central-bound} with the preceding ray estimates, we obtain inequalities~\eqref{eq:differentiated-Q-low} and~\eqref{eq:differentiated-Q-difference-low} for $j=0$.

		We now differentiate with respect to $r$, keeping $z$ fixed. On the
		central part of $r^{-1}\Gamma$, Formula~\eqref{eq:def_tildeps} gives
		\begin{equation*}
			\epsilon_{\mu,\h}(rz,k)^{-1} =\frac{r^2}{r^2+\CK\,\widetilde\epsilon_{\mu,\h}(iz,r)}.
		\end{equation*}
		By Inequality~\eqref{eq:bound_convexity_mu_2}, the denominator is bounded away
		from zero uniformly in $h$ and $r$. The centered Taylor argument used
		in~\eqref{eq:centered-difference-endpoints-Gr} may also be
		differentiated with respect to $r$ and gives, for $0\leq j\leq2$,
		\begin{equation}\label{eq:differentiated-centered-difference}
			\n{\partial_r^j \bigl(D_{hr}F(z)-F'(z)\bigr)} \leq C \, h^2r^{2-j}.
		\end{equation}
		Differentiating the preceding formula for $\epsilon_{\mu,\h}^{-1}$ therefore gives a bound of order $r^{2-j}$, whereas the quantum--classical difference is bounded by $C\, h^2 r^{4-j}$. The same respective bounds hold for $P_{\h,r}-1$ and $P_{\h,r}-P_{0,r}$. This proves~\eqref{eq:differentiated-Q-low} and~\eqref{eq:differentiated-Q-difference-low} on the central part $r^{-1}\Gamma_1$.

		On the two rays, we differentiate the explicit expression for $Q_{\h,r}$. Inequalities~\eqref{eq:zeta-radial-derivatives} and~\eqref{eq:zeta-difference-radial-derivatives}, together with
		\begin{equation*}
			b_\h(r) = O(r^2) \, , \quad \text{ and } \quad b_\h(r)-b_0(r) = \frac{\CK M_0}{4} \, h^2r^4 ,
		\end{equation*}
		show that each derivative in $r$ costs at most one power of $r^{-1}$, while the decay $\weight{z}^{-2}$ is preserved. The differentiated resolvent denominators remain uniformly controlled because the rescaled rays stay at a fixed distance from the two limiting poles. This completes the proof of inequalities~\eqref{eq:differentiated-Q-low} and~\eqref{eq:differentiated-Q-difference-low}.

		After the change of variables $\lambda=rz$, Identity~\eqref{eq:Gr-corrected-contour} becomes
		\begin{equation}\label{eq:scaled-Gr-contour}
			\hatGr_\h(t,r) =\frac{r}{i}\int_{\widetilde\Gamma} e^{2\pi rtz}\mathscr Q_\h(r,z) \d z \, .
		\end{equation}
		We may differentiate this formula twice. If $\ell$ derivatives hit the exponential, the resulting factor is $(tz)^\ell$. The elementary ray estimate
		\begin{equation*} 
			x^\ell\int_1^\infty s^\ell\weight{s}^{-2}e^{-cxs} \d s
			\leq C\weight{x}^{\ell} ,
			\qquad \text{for } x>0, \text{ and } 0\leq\ell\leq 2 ,
		\end{equation*}
		together with Inequality~\eqref{eq:differentiated-Q-low} gives for any $0\leq j\leq2$
		\begin{equation}\label{eq:radial-Gr-derivatives-low}
			\n{\partial_r^j\hatGr_\h(t,r)} \leq C \, r^{3-j} \weight{rt}^{j} e^{-\sigma rt} \, .
		\end{equation}
		Applying the same calculation to the difference and using Inequality~\eqref{eq:differentiated-Q-difference-low} gives
		\begin{equation}\label{eq:radial-Gr-difference-derivatives-low}
			\n{\partial_r^j(\hatGr_\h-\hatGr_0)(t,r)} \leq C\, h^2 r^{5-j} \weight{rt}^{j} e^{-\sigma rt}.
		\end{equation}

		For $r\geq r_0^{-1}$ and $\omega\in\R$, let
		\begin{equation*}
			\mathscr R_\h(\lambda,r) := \frac{1-\epsilon_{\mu,\h}(\lambda,k)}{\epsilon_{\mu,\h}(\lambda,k)} \, ,
			\quad
			\text{ with } \lambda = -\sigma r + i\omega \, ,
		\end{equation*}
		and denote differentiation along this contour by
		\begin{equation*}
			\d_r:=\partial_r-\sigma\, \partial_\lambda \, .
		\end{equation*}
		On this contour, Equation~\eqref{eq:def_tildeps} and $\lambda = -\sigma r+i\omega$ give
		\begin{equation*}
			\epsilon_{\mu,\h}(\lambda,k)
			=1 + \CK \, r^{-2} \, \widetilde\epsilon_{\mu,\h}(-\sigma r+i\omega,r) \, .
		\end{equation*}
		The weighted holomorphic bounds for the centered quotient, after one or two differentiations at fixed $\omega$, give for any $0\leq j\leq2$
		\begin{align*}
			\n{\d_r^j\epsilon_{\mu,\h}} &\leq\frac{C\,r^{-j}}{r^2+\omega^2} \, ,
			\\
			\n{\d_r^j(\epsilon_{\mu,\h}-\epsilon_{\mu,0})} &\leq\frac{C\,h^2r^{2-j}}{r^2+\omega^2} \, .
		\end{align*}
		Indeed, for $hr\leq1$ the second line follows from the centered Taylor formula, while for $hr\geq1$ it follows by bounding the two quotients separately and using $1\leq h^2r^2$. Differentiating the resolvent identity and using the uniform gap now gives
		\begin{align}\label{eq:high-resolvent-derivatives}
			\n{\d_r^j\mathscr R_\h(-\sigma r+i\omega,r)}
			&\leq\frac{C \, r^{-j}}{r^2+\omega^2} \, ,
			\\ \label{eq:high-resolvent-difference-derivatives}
			\n{\d_r^j(\mathscr R_\h-\mathscr R_0)(-\sigma r+i\omega,r)}
			&\leq\frac{C \, h^2r^{2-j}}{r^2+\omega^2} \,.
		\end{align}
		To see the powers of $r$ explicitly, observe that each application of $\d_r$ differentiates either the Coulomb factor $r^{-2}$, the centered difference with step $hr$, or the phase velocity
		\begin{equation*}
			\vartheta=\frac{i(-\sigma r+i\omega)}r = -i\sigma-\frac{\omega}{r} \, .
		\end{equation*}
		The first two operations cost at most one factor $r^{-1}$. The last produces $\omega/r^2$, but also differentiates a Cauchy kernel; the additional denominator compensates the factor $\omega/r$ and again
		leaves at most one factor $r^{-1}$. For the quantum--classical difference, the factor $h^2r^2$ comes from the centered Taylor estimate~\eqref{eq:differentiated-centered-difference}. Derivatives in $\lambda$ only improve the decay in $\omega$, so the right-hand
		sides above remain valid. This also supplies an integrable majorant and justifies differentiating the vertical-contour integral.

		When a derivative hits the exponential it gives the factor $-2\pi\sigma t$. Hence Leibniz' rule and~\eqref{eq:high-resolvent-derivatives}--\eqref{eq:high-resolvent-difference-derivatives} yield for any $0\leq j\leq2$,
		\begin{align*}
			\n{\partial_r^j\hatGr_\h(t,r)}
			&\leq C\, r^{-1-j}\weight{rt}^{j}e^{-\sigma rt},
			\\
			\n{\partial_r^j(\hatGr_\h-\hatGr_0)(t,r)}
			&\leq C \, h^2r^{1-j}\weight{rt}^{j}e^{-\sigma rt} .
		\end{align*}
		On the compact annulus the same bounds follow from smooth dependence on $r$, the uniform resolvent gap, and compactness.

		Finally, if $u(k) = u_0(\n{k})$ is radial, then for $1\leq\n{\alpha}\leq2$,
		\begin{equation*}
			\n{\partial_k^\alpha u(k)}
			\leq C\sum_{j=1}^{\n{\alpha}}
			r^{j-\n{\alpha}}\n{\partial_r^j u_0(r)} .
		\end{equation*}
		Combining this identity with the low-, intermediate-, and high-frequency radial estimates proves inequalities~\eqref{eq:Gr-symbol-derivatives} and~\eqref{eq:Gr-difference-symbol-derivatives}.
	\end{proof}


\subsection{Green functions in the physical space \label{physicalspace}}


	We now bound $\Gosc_\pm$ and $\Gr$ in the physical space.
	
	\begin{lem}\label{lem:decay_Gosc}
		For every $t>0$, uniformly in $\h\in[0,1]$,
		\begin{equation} \label{eq:decayGosc1}
			\Nrm{\Gosc_{\h,\pm}(t,\cdot)}{L^\infty} \lesssim \weight{t}^{-3/2} \quad \text{ and } \quad \Nrm{\Gosc_{\h,\pm}(t,\cdot)}{L^2} \lesssim 1 \, .
		\end{equation}
		Moreover,
		\begin{align}\notag
			\Nrm{\Gosc_{\h,\pm}(t,\cdot)-\Gosc_{0,\pm}(t,\cdot)}{L^\infty}
			&\lesssim h^2,
			\\\label{eq:decayGosc2}
			\Nrm{\Gosc_{\h,\pm}(t,\cdot)-\Gosc_{0,\pm}(t,\cdot)}{L^2}
			&\lesssim \min(1,h^2\weight{t}) \, .
		\end{align}
	\end{lem}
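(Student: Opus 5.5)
The plan is to work entirely on the Fourier side, using the explicit form $\hatGosc_{\h,\pm}(t,k) = a_{\h,\pm}(k)\,e^{2\pi\lambda_{\h,\pm}(k)t}$, which is supported in $\n{k}\le 1$ (in fact in the small ball where $\phi$-localisation puts the residues), together with the expansions of $\lambda_{\h,\pm}$ and $a_{\h,\pm}$ from Section~\ref{sec:dielsmall} and the lemma on $a_{\h,\pm}$.

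\emph{The $L^2$ bounds.} By Plancherel, $\Nrm{\Gosc_{\h,\pm}(t)}{L^2} = \Nrm{\hatGosc_{\h,\pm}(t)}{L^2_k}$. Since $\re\lambda_{\h,\pm}\le 0$ (no zeros with $\Re\lambda\ge0$) and $a_{\h,\pm}$ is bounded on a compact support, this is $\lesssim 1$ uniformly in $\h$. For the difference we write
\begin{equation*}
\hatGosc_{\h,\pm}-\hatGosc_{0,\pm} = (a_{\h,\pm}-a_{0,\pm})e^{2\pi\lambda_{\h,\pm}t} + a_{0,\pm}\bigl(e^{2\pi\lambda_{\h,\pm}t}-e^{2\pi\lambda_{0,\pm}t}\bigr),
\end{equation*}
and use $\n{e^{A}-e^{B}}\le \n{A-B}$ when $\re A,\re B\le 0$. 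With $\n{a_{\h,\pm}-a_{0,\pm}}+\n{\lambda_{\h,\pm}-\lambda_{0,\pm}}\lesssim \n{k}^4h^2$ this gives a pointwise bound $\lesssim h^2\n{k}^4\weight{t}$, and also the trivial bound $\lesssim 1$. Integrating over the compact support yields $\min(1,h^2\weight{t})$.

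\emph{The $L^\infty$ decay.} For $t\lesssim 1$ we bound $\Nrm{\Gosc}{L^\infty}\le\Nrm{\hatGosc}{L^1}\lesssim 1$. For large $t$ we treat $\Gosc_{\h,\pm}(t,x) = \int a_{\h,\pm}(k)e^{2\pi\lambda_{\h,\pm}(k)t+2i\pi k\cdot x}\d k$ as an oscillatory integral with phase $2\pi t\,\im\lambda_{\h,\pm}(k)+2\pi k\cdot x$. From $\im\lambda_{\h,\pm}=\pm c_0\pm c_1\n{k}^2+O(\n{k}^4)$ with $c_1\neq0$ and radial dependence, the Hessian of the phase in $k$ is $\approx \pm 2c_1 t\,\mathrm{Id}$, nondegenerate uniformly in $\h$ on the support, and the $O(\n{k}^4)$ corrections (with their derivatives, as obtained from the differentiated expansion $\partial_k\lambda$) are uniform in $\h$. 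A stationary phase / van der Corput estimate in dimension $3$ then gives $t^{-3/2}$. The damping factor $e^{2\pi t\re\lambda}$ and $a_{\h,\pm}$ are treated as smooth amplitudes with bounded derivatives: here we need that $t\,\partial_k\re\lambda_{\h,\pm}$ is controlled, which follows since $\re\lambda$ is extremely small (super-polynomially or like $\n{k}^{-2}F'(\n{k}^{-1})$) with derivatives of the same type, so $t^j\n{\partial^j_k \re\lambda}e^{2\pi t\re\lambda}$ stays bounded. I expect this amplitude control, uniformly in $\h$, to be the main obstacle; the fallback is to use the Littman-type bound with $L^1$ norms of up to two derivatives of the amplitude times the factor $t^{-3/2}$, and bound those derivatives using the expansions of $\partial_k\lambda_{\h,\pm}$ and of $a_{\h,\pm}$.

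\emph{The $L^\infty$ difference.} A direct estimate via $\Nrm{\cdot}{L^1_k}$ only gives $h^2\weight{t}$, so instead we split: for $\weight{t}\lesssim 1$ use the $L^1_k$ bound $h^2$; for large $t$, apply the same stationary phase argument to the difference written as $\int_0^1\frac{\d}{\d s}$ of the interpolated symbol $a_s e^{2\pi\lambda_s t}$ with $a_s,\lambda_s$ interpolating between $\h$ and $0$. The derivative brings a factor $h^2\n{k}^4(1+t)$, and the extra $t$ is absorbed by the $t^{-3/2}$ decay together with the $\n{k}^4$ weight, which near the stationary point $k\sim x/t$ effectively contributes a further gain; this produces an $O(h^2)$ bound uniformly in $t$.
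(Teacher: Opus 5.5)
The gap is in the $L^\infty$ decay step, and the $L^\infty$ difference bound depends on it. Your $L^2$ estimates are the paper's Plancherel argument and are fine; the inequality $\n{e^A-e^B}\le\n{A-B}$ for $\re A,\re B\le 0$ is exactly what is needed there. Interpolating $a_s,\lambda_s$ for the $L^\infty$ difference is a legitimate alternative to the paper's split at $t=h^{-2}$.

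The problem is the uniform $t^{-3/2}$ stationary phase bound with amplitude $a_{h,\pm}e^{2\pi t\re\lambda_{h,\pm}}$. You argue that $t^j\n{\partial_k^j\re\lambda_{h,\pm}}e^{2\pi t\re\lambda_{h,\pm}}$ stays bounded because $\re\lambda_{h,\pm}$ is small. This is false. What matters is the shell where $t\n{\re\lambda_{h,\pm}}\sim 1$, and there $\n{\nabla_k\re\lambda}\sim\n{\re\lambda}/w$, where $w$ is the scale on which $\re\lambda$ varies.

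\begin{itemize}
\item For the admissible $\mu_\beta$ with $\beta=5$, the asymptotics of Theorem~\ref{thm:Green_functions} give $\n{\re\lambda_{h,\pm}(k)}\approx 2\pi\n{k}^{-2}\n{F'(\n{k}^{-1})}\approx c\n{k}^5$. Hence $t\n{\nabla_k\re\lambda}e^{2\pi t\re\lambda}\approx 5c\,t\n{k}^4e^{-2\pi ct\n{k}^5}$, which is of size $t^{1/5}$ at $\n{k}\sim t^{-1/5}$.
\item For the Gaussian, the same quantity grows like $(\log t)^{3/2}$ at $\n{k}\sim(\log t)^{-1/2}$.
\end{itemize}

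Your fallback does not rescue this. An estimate $\n{\int e^{it\phi}A}\lesssim t^{-3/2}\sum_{\n{\alpha}\le 2}\Nrm{\partial^\alpha A}{L^1}$ is false in $\mathbb{R}^3$. Take $\phi=\n{k}^2$ and $A(k)=t^{3/2}\psi(t^{1/2}k)$ with suitable $\psi\in C_c^\infty$. The left side is the fixed number $\n{\int e^{i\n{u}^2}\psi(u)\d u}$, while the right side is $O(t^{-1/2})$. Versions with enough derivatives to be true do see the growth, for instance $\Nrm{\partial^4 e^{-ct\n{k}^5}}{L^1}\sim t^{1/5}$ on the support. So as written you do not obtain $\weight{t}^{-3/2}$, and your $L^\infty$ difference argument relies on the same unproved bound.

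What is missing is a comparison of scales. You need, uniformly in $h$,
\[
\n{\partial_k^\alpha\bigl(a_{h,\pm}e^{2\pi t\re\lambda_{h,\pm}}\bigr)}\lesssim w_t^{-\n{\alpha}}\quad\text{with}\quad w_t\gtrsim t^{-1/2}.
\]
In the model cases this holds, with $w_t\sim t^{-1/5}$ and $w_t\sim(\log t)^{-3/2}$ respectively. The reason is that the damping rate vanishes at $k=0$ to order higher than two. It has to be extracted from the structure of $F$, not from the smallness of $\re\lambda$.

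With this bound, a stationary phase lemma that tolerates such amplitudes closes the argument:
\begin{itemize}
\item Keep the region $\n{k-k_s}\lesssim t^{-1/2}$; its contribution is $O(t^{-3/2})$.
\item On dyadic shells $\n{k-k_s}\sim 2^jt^{-1/2}$, integrate by parts with $\nabla\phi\cdot\nabla/(it\n{\nabla\phi}^2)$. Each step gains $\frac{1}{t\n{k-k_s}}\bigl(\n{k-k_s}^{-1}+w_t^{-1}\bigr)\lesssim 2^{-j}$.
\item Four steps make the shells summable.
\end{itemize}
This requires $C^5$ bounds on $\im\lambda_{h,\pm}$, uniform in $h$, which go beyond the first-derivative expansion available in the paper.

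You also cannot borrow the paper's treatment of this step. The paper puts $\re\lambda$ into the complex phase $\lambda_{h,\pm}(k)+ik\cdot x/t$ and invokes steepest descent, assuming $\lambda_{h,\pm}$ is holomorphic near $k=0$. That assumption is incompatible with $\re\lambda_{h,\pm}$ behaving like $-c\n{k}^5$, or being flat but nonzero at $0$.
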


	\begin{proof}
		Let $0 \le \h \le 1$. We recall that
		\begin{equation}\label{eq:Gsreal}
			\Gosc_{\h,\pm}(t,x) = \int e^{2\pi\lambda_{\h,\pm}(k) t + 2i\pi k \cdot x} \,a_{\h,\pm}(k) \d k \,.
		\end{equation}
		Note first that, as $a_{\h,\pm}(k)$ is compactly supported and smooth, $\Gosc_{\h,\pm}(t,x)$ is uniformly bounded in $t$ and $x$ and rapidly decaying as $x$ goes to infinity. In particular, inequalities~\eqref{eq:decayGosc1} and~\eqref{eq:decayGosc2} are true for $0 \le t \le 1$.
		
		Moreover, as soon as $k \ne 0$, 
		$\Re{\lambda_{\h,\pm}(k)} < 0$, and both $\lambda_{\h,\pm}$ and $a_{\h,\pm}$ are holomorphic in the ball of center $0$ and radius $\kappa$, provided $\kappa > 0$ is small enough. The integral on $\n{k} \ge \kappa/2$ is thus exponentially decreasing in time. Let us focus on the integral on $\n{k} < \kappa/2$. As both $\lambda_{\h,\pm}$ and $a_{\h,\pm}$ are holomorphic in this area, we may use the steepest descent method. For large $t$, the right-hand side of Equation~\eqref{eq:Gsreal} is an oscillatory integral in $k$, with phase $t\, \Theta_{\h,\pm}(k,y)$, where
		\begin{equation*}
			\Theta_{\h,\pm}(k,y) = \lambda_{\h,\pm}(k) + i\, k \cdot y
		\end{equation*}
		where $y = x/t$. 
		The phase is stationary at $k=k_s$ such that $\nabla_k \Theta_{\h,\pm}(k_s,y) = 0$, namely when
		\begin{equation*}
			\nabla_k \lambda_{\h,\pm}(k_s) + i \,y = 0 \, .
		\end{equation*}
		We recall that the real part of $\nabla_k \lambda_{\h,\pm}(k,s)$ is of order $k^{N-1}$ for polynomial equilibrium, and even smaller for exponential ones, whereas its imaginary part equals $2 k + O(\n{k}^3)$. We expand the equation in $k$, which leads to 
		\begin{equation*}
			\pm 2 k_s \lt(1+ O(\n{k_s}^2) \rt) + y = O(k_s^3\, h^2)
		\end{equation*}
		or equivalently
		\begin{equation} 
			k_s = \mp \frac{y}{2} \lt(1 + O(\n{y}^2) + O(\n{y}^3 h^2) \rt) ,
		\end{equation}
		which gives the following asymptotic expansion
		\begin{equation*}
			\Gosc_{\h,\pm}(t,x) \sim \frac{C_0}{t^{\frac{3}{2}}} \exp\!\( \Lambda_{\h,\pm}(\n{y}^2)\, t \) ,
		\end{equation*}
		for some constant $C_0$, where
		\begin{equation*}
			\Lambda_{\h,\pm}(u) = \lambda_{\h,\pm}\!\lt(u^{1/2}\rt)
			= \re \lambda_{\h,\pm}\!\lt(u^{1/2}\rt) \pm i \mp i \, u \lt( 1+ O( u ) \rt) + O(h^2)
		\end{equation*}
		and where 
		\begin{equation*}
			y = \frac{x}{2 t} + O \Bigl( \frac{ \n{x}^3}{t^3} \Bigr) \, .
		\end{equation*}
		Using $\n{\Lambda_{\h,\pm}(u) - \Lambda_{0,\pm}(u)}\lesssim h^2$ and $\n{a_{\h,\pm}(k) - a_{0,\pm}(k)} \lesssim h^2$, we obtain
		\begin{align*}
			\Nrm{\Gosc_{\h,\pm}(t,\cdot) - \Gosc_{0,\pm}(t,\cdot)}{L^\infty}
			& \lesssim e^{\re \Lambda_{\h,\pm}(|y|^2 t)} \n{a_{\h,\pm}- a_{0,\pm}}
			\\
			& \quad + | e^{\Lambda_{\h,\pm}(|y|^2 t)} - e^{\Lambda_{0,\pm}(|y|^2 t)}| \n{a_{0,\pm}} .
		\end{align*}
		For $t \le h^{-2}$,
		\begin{equation*}
			\Nrm{\Gosc_{\h,\pm}(t,\cdot) - \Gosc_{0,\pm}(t,\cdot)}{L^\infty} \lesssim \frac{e^{h^2 t} -1}{\weight{t}^{3/2}} + h^2 \lesssim h^2 \, ,
		\end{equation*}
		and for $t \ge h^{-2}$, 
		\begin{equation*}
			\Nrm{\Gosc_{\h,\pm}(t,\cdot) - \Gosc_{0,\pm}(t,\cdot)}{L^\infty} \lesssim \frac{2}{\weight{t}^{3/2}} + h^2 \lesssim h^2 .
		\end{equation*}
		The bound in $L^2$ follows similarly since $a$ is rapidly decaying in $x$ using the smallness of the $\n{\lambda_\h-\lambda_0}$, $\n{a_\h-a_0}$ and Plancherel theorem. 
	\end{proof}

	We now turn to bounds on the remainder $\Gr_h$. We define
	\begin{equation*}
		\GrE_h = - \nabla_x \Delta^{-1} \Gr_h
	\end{equation*}
	to be the remainder Green function associated with the electric field.
	
	\begin{lem}\label{lem:decay_Gr}
		For any $h\in[0,1]$ and $t>0$, it holds that
		\begin{align*}
			\Nrm{\Gr_\h(t,\cdot)}{L^\infty} &\lesssim \frac{1}{t^2\weight{t}^4} \,  ,
			&
			\Nrm{\Gr_\h(t,\cdot)}{L^1} &\lesssim \frac{1}{t^{1/8}\weight{t}^{3-1/8}} \,  ,
			\\
			\Nrm{\GrE_\h(t,\cdot)}{L^\infty} &\lesssim \frac{1}{t\weight{t}^4} \,  ,
			&
			\Nrm{\GrE_\h(t,\cdot)}{L^1} &\lesssim \frac{1}{\weight{t}^{2}} \, . 
		\end{align*}
		Moreover, we have
		\begin{align*}
			\Nrm{\Gr_\h(t,\cdot) - \Gr_0(t,\cdot)}{L^\infty} &\lesssim \frac{h^2}{t^4\weight{t}^4} \, ,
			&
			\Nrm{\Gr_\h(t,\cdot) - \Gr_0(t,\cdot)}{L^1} &\lesssim \frac{h^{2}}{t\weight{t}^{4}} 
			\\
			\Nrm{\GrE_\h(t,\cdot) - \GrE_0(t,\cdot)}{L^\infty} &\lesssim \frac{h^2}{t^3\weight{t}^4} \, ,
			&
			\Nrm{\GrE_\h(t,\cdot) - \GrE_0(t,\cdot)}{L^1} &\lesssim \frac{h^{2}}{t^{3/8}\weight{t}^{4-3/8}} \, .
		\end{align*}
	\end{lem}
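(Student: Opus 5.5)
The plan is to derive the physical-space bounds from the Fourier-side symbol estimates of Lemma~\ref{lem:estim_hatGr}. For $L^\infty$ bounds we use $\Nrm{G}{L^\infty}\le\Nrm{\hatG}{L^1_k}$. For $L^1$ bounds we use a weighted Plancherel/Sobolev interpolation that needs derivatives in $k$, which is exactly what \eqref{eq:Gr-symbol-derivatives} and \eqref{eq:Gr-difference-symbol-derivatives} provide up to order two. Throughout we write $r=\n{k}$. Because of the factor $e^{-\sigma rt}$, the natural frequency scale is $r\sim 1/t$ for large $t$; at small $t$ the high-frequency tail $r^{-1}$ (respectively $r^{1}$ for the difference) governs the singular powers of $t$.

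\textbf{$L^\infty$ bounds.} By \eqref{eq:hatGr1},
\begin{equation*}
\Nrm{\Gr_\h(t)}{L^\infty}\lesssim\int_0^\infty \frac{r^{3}}{\weight{r}^4}\,e^{-\sigma rt}\,r^2\d r .
\end{equation*}
The low-frequency part gives $\int r^5e^{-\sigma rt}\d r\sim t^{-6}$. The high-frequency part gives $\int_1^\infty r\,e^{-\sigma rt}\d r\lesssim t^{-2}e^{-\sigma t}$ for large $t$ and $\sim t^{-2}$ for small $t$. Together these yield $t^{-2}\weight{t}^{-4}$.

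For $\GrE_\h$ the symbol is multiplied by $k/\n{k}^2$, which lowers each power of $r$ by one. This gives $t^{-1}\weight{t}^{-4}$ after the same computation, the high part now being $\int_1^\infty e^{-\sigma rt}\d r\sim t^{-1}$.

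For the differences we use \eqref{eq:splitr}, keeping $\min(1,h^2r^2)\le h^2r^2$. This adds $r^2$, which produces $h^2t^{-4}\weight{t}^{-4}$ for $\Gr$ and $h^2t^{-3}\weight{t}^{-4}$ for $\GrE$.

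\textbf{$L^1$ bounds.} I would split $\Gr_\h=\chi(k)\hatGr_\h+(1-\chi)\hatGr_\h$ on the Fourier side, or equivalently decompose dyadically $r\sim 2^j$. On each dyadic piece we apply
\begin{equation*}
\Nrm{G_j}{L^1}\lesssim \Nrm{\hatG_j}{L^2}^{1-3/4}\Nrm{\n{x}^2G_j}{L^2}^{3/4}
\end{equation*}
(Cauchy--Schwarz with weight $\weight{x}^{-2}$, scaled optimally). Here $\Nrm{\n{x}^2G_j}{L^2}\sim\Nrm{\partial_k^2\hatG_j}{L^2}$, and since each $\partial_k$ costs $r^{-1}\weight{rt}$ on a piece at scale $2^j$, we get $\Nrm{G_j}{L^1}\lesssim\sup_{r\sim2^j}\n{\hatG}\cdot\weight{2^jt}^{3/2}$ up to the volume factors. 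For $\Gr_\h$ this gives $2^{3j}\weight{2^jt}^{3/2}e^{-\sigma2^jt}$ at low $j$. Summing over $j$ yields $\weight{t}^{-3}$ at large $t$. At high $j$ the sum is controlled by the symbol size $2^{-j}$ times the same loss, and this is where the $t^{-1/8}$ singularity arises: the sum $\sum_j 2^{-j}\weight{2^jt}^{3/2}e^{-\sigma 2^jt}$ is not summable uniformly, and one interpolates between $L^2$ and weighted $L^2$ with non-integer exponents to trade it for $t^{-1/8}$. The exact exponent $1/8$ comes from choosing the interpolation parameter so that the high-frequency sum converges. The analogous computations for $\GrE_\h$ (one power of $r$ fewer) and for the differences (extra $h^2r^2$, via \eqref{eq:Gr-difference-symbol-derivatives}) give the remaining $L^1$ entries.

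\textbf{Main obstacle.} I expect the main difficulty to be the high-frequency $L^1$ bounds. Only two $k$-derivatives are available, which is below the $3/2+$ derivatives needed for a clean Bernstein-type $L^1$ bound in dimension $3$. So the dyadic summation at $r\gtrsim 1/t$ must balance the $e^{-\sigma rt}$ decay against the derivative loss $\weight{rt}^{\n{\alpha}}$, with fractional interpolation. This balance is what produces the non-sharp exponents $t^{-1/8}$ and $t^{-3/8}$, and I would first verify that the interpolation with $\n{\alpha}\le2$ indeed closes.
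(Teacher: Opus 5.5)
Your $L^\infty$ bounds are obtained exactly as in the paper: you use $\|G\|_{L^\infty}\le\|\widehat G\|_{L^1}$ together with \eqref{eq:hatGr1}, \eqref{eq:splitr} and the extra factor $|k|^{-1}$ for $\GrE_\h$.

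For the $L^1$ bounds, your dyadic route is viable, but your account of the small-time behavior is wrong. The step where you ``interpolate with non-integer exponents'' to produce $t^{-1/8}$ is not an actual argument, and it is not needed. Your per-piece bound $\|G_j\|_{L^1}\lesssim A_j\langle 2^jt\rangle^{3/2}$ is correct, where $A_j$ is the size of the symbol on $\{|k|\sim 2^j\}$. The high-frequency sum you worry about is in fact uniformly bounded:
\[
\sum_{2^j\ge 1}2^{-j}\langle 2^jt\rangle^{3/2}e^{-\sigma 2^jt}\lesssim \sum_{1\le 2^j\le 1/t}2^{-j}+t\sum_{2^jt>1}(2^jt)^{1/2}e^{-\sigma 2^jt}\lesssim 1+t .
\]
Carried out honestly, your route gives
\[
\|\Gr_\h(t)\|_{L^1}\lesssim\langle t\rangle^{-3},\qquad \|\GrE_\h(t)\|_{L^1}\lesssim\langle t\rangle^{-2},
\]
\[
\|(\Gr_\h-\Gr_0)(t)\|_{L^1}\lesssim h^2t^{-1}\langle t\rangle^{-4},\qquad \|(\GrE_\h-\GrE_0)(t)\|_{L^1}\lesssim h^2\log(2+1/t)\,\langle t\rangle^{-4}.
\]
All of these imply the lemma. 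The logarithm is the expected loss for a symbol of order $0$ at $t=0$.

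Your ``main obstacle'' is also backwards. Two $k$-derivatives are \emph{more} than the $3/2$ required. That is exactly why $\|g\|_{L^1}\lesssim\|g\|_{L^2}^{1/4}\,\||x|^2g\|_{L^2}^{3/4}$ holds in dimension three: $|x|^{-2}$ is square integrable on $\{|x|>R\}$.

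The paper does not decompose at all. It applies this inequality once to the whole kernel, using $\||x|^2g\|_{L^2}\lesssim\sum_{|\alpha|=2}\|\partial_k^\alpha\widehat g\|_{L^2}$ and the radial scaling formulas. For large $t$ this gives, for instance, $(t^{-9/2})^{1/4}(t^{-5/2})^{3/4}=t^{-3}$ for $\Gr_\h$.

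The exponents $1/8$ and $3/8$ are artifacts of this global version at small $t$:
\begin{itemize}
\item The high-frequency bound $|k|^{-1}$ on $\hatGr_\h$ is not square integrable. So one only gets $\|\Gr_\h(t)\|_{L^2}\lesssim t^{-1/2}$, while $\||x|^2\Gr_\h(t)\|_{L^2}\lesssim 1$, whence $t^{-1/8}$.
\item Similarly, $\|(\GrE_\h-\GrE_0)(t)\|_{L^2}\lesssim h^2t^{-3/2}$ with a bounded weighted norm gives $h^2t^{-3/8}$.
\end{itemize}
The paper's route is thus shorter, with no Littlewood--Paley bookkeeping of cutoff derivatives. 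Yours, once the dyadic sums are actually computed, is sharper for small times.
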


	\begin{proof}
		We first record two elementary scaling estimates. For $m>-3$ and $t>0$, we have
		\begin{equation}\label{eq:radial-L1-scaling}
			\intd \n{k}^m e^{-\sigma \n{k}t} \d k = \tfrac{4\pi \Gamma(m+3)}{\sigma^{m+3}}\, t^{-m-3} \, .
		\end{equation}
		Similarly, if $m-a \geq - 3/2$, then
		\begin{equation}\label{eq:radial-L2-scaling}
			\bNrm{\n{k}^{m-a}\weight{\n{k}t}^{a}e^{-\sigma \n{k} t}}{L^2} = C\, t^{-m+a-3/2}
		\end{equation}
		for some finite constant $C>0$ depending on $a$, $k$ and $\sigma$. On the other hand, when $t\to 0$, then by dominated convergence, for $p\geq 1$
		\begin{align*} 
			&\Nrm{\weight{k}^{-4}\n{k}^{m-a}\weight{\n{k}t}^{a}e^{-\sigma \n{k} t}}{L^p} \to C &&\text{ if } -\frac{3}{p}<m-a<4-\frac{3}{p} \, ,
            \\
            &\Nrm{\weight{k}^{-4}\n{k}^{m-a}\weight{\n{k}t}^{a}e^{-\sigma \n{k} t}}{L^p} \sim C\,t^{4-3/p+a-m} &&\text{ if } m-a>4-\frac{3}{p} \, .
		\end{align*}

		Let
		\begin{equation*}
			\mathcal M_E(k) := -\widehat{\nabla K}(k) = -2i\pi \CK \, \frac{k}{\n{k}^2} \, ,
			\qquad
			\hatGrE_\h(t,k)=\mathcal M_E(k)\hatGr_\h(t,k) \, .
		\end{equation*}
		Since $\mathcal M_E$ is homogeneous of degree $-1$, for any multi-index $\beta$ one has
		\begin{equation*}
			\n{\partial_k^\beta\cM_E(k)} \leq C_\beta \n{k}^{-1-\n{\beta}}  .
		\end{equation*}
		Leibniz' rule and the differentiated bounds~\eqref{eq:Gr-symbol-derivatives} and~\eqref{eq:Gr-difference-symbol-derivatives} therefore imply, for $\n{\alpha}\leq 2$,
		\begin{align}\label{eq:GrE-symbol-derivatives}
			\n{\partial_k^\alpha \hatGrE_\h(t,k)}
			&\lesssim \frac{\n{k}^{2-\n{\alpha}}}{\weight{k}^4}
			\weight{\n{k}t}^{\n{\alpha}}e^{-\sigma \n{k}t},
			\\\label{eq:GrE-difference-symbol-derivatives}
			\n{\partial_k^\alpha
			\bigl(\hatGrE_\h - \hatGrE_0\bigr)(t,k)}
			&\lesssim h^2 \, \frac{\n{k}^{4-\n{\alpha}}}{\weight{k}^4}
			\weight{\n{k}t}^{\n{\alpha}}e^{-\sigma \n{k}t}.
		\end{align}
		Indeed, every term in the first Leibniz sum is bounded by
		\begin{equation*}
			\n{k}^{-1-|\beta|}
			\frac{\n{k}^{3-|\alpha-\beta|}}{\weight{k}^4}
			\weight{\n{k}t}^{|\alpha-\beta|}e^{-\sigma \n{k}t}
			\leq \frac{\n{k}^{2-\n{\alpha}}}{\weight{k}^4}
			\weight{\n{k}t}^{\n{\alpha}}e^{-\sigma \n{k}t},
		\end{equation*}
		and the estimate on the difference is identical with $\n{k}^3$ replaced by $h^2\n{k}^5$. Together with inequalities~\eqref{eq:Gr-symbol-derivatives} and~\eqref{eq:Gr-difference-symbol-derivatives}, these estimates show that the derivatives up to order two of all four symbols are locally square integrable at $k=0$. 

		We now prove the $L^\infty$ bounds. The Fourier inversion formula gives 
		\begin{align*}
			\Nrm{\Gr_\h(t,\cdot)}{L^\infty} \leq\sNrm{\hatGr_\h(t,\cdot)}{L^1} &\lesssim t^{-6},
			\\
			\Nrm{\GrE_\h(t,\cdot)}{L^\infty} \leq\sNrm{\widehat{\GrE_\h}(t,\cdot)}{L^1} &\lesssim t^{-5},
			\\
			\Nrm{\Gr_\h(t,\cdot)-\Gr_0(t,\cdot)}{L^\infty} &\lesssim \h^2\, t^{-8},
			\\
			\Nrm{\GrE_\h(t,\cdot)-\GrE_0(t,\cdot)}{L^\infty} &\lesssim \h^2 \, t^{-7}.
		\end{align*}
		It remains to establish the $L^1$ bounds. We shall use the following weighted interpolation inequality in dimension three
		\begin{equation}\label{eq:weighted-L2-to-L1}
			\Nrm{g}{L^1} \lesssim \Nrm{g}{L^2}^{1/4} \Nrm{\n{x}^2g}{L^2}^{3/4} ,
		\end{equation}
		which follows for instance by writing for $R>0$, by the Cauchy--Schwarz inequality,
		\begin{align*}
			\Nrm{g}{L^1} = \int_{B_R} \n{g} \, dx + \int_{\Rd\setminus B_R} \n{g} \, dx 
			\lesssim R^{3/2}\Nrm{g}{L^2} + R^{-1/2}\Nrm{\n{x}^2g}{L^2}
		\end{align*}
		and optimizing in $R$. By Plancherel's theorem
		\begin{equation}\label{eq:weighted-plancherel}
			\Nrm{\n{x}^2g}{L^2}
			=\frac{1}{4\pi^2} \Nrm{\Delta_k\widehat g}{L^2}
			\lesssim
			\sum_{\n{\alpha}=2}
			\Nrm{\partial_k^\alpha\widehat g}{L^2}.
		\end{equation}
		Applying Formula~\eqref{eq:radial-L2-scaling} to \eqref{eq:Gr-symbol-derivatives} and~\eqref{eq:GrE-symbol-derivatives} yields
		\begin{align*}
			\Nrm{\Gr_\h(t)}{L^2} &\lesssim t^{-9/2},
			&\Nrm{\n{x}^2\Gr_\h(t)}{L^2}
			&\lesssim t^{-5/2},\\
			\Nrm{\GrE_\h(t)}{L^2}
			&\lesssim t^{-7/2},
			&\Nrm{\n{x}^2\GrE_\h(t)}{L^2}
			&\lesssim t^{-3/2}.
		\end{align*}
		Hence the interpolation inequality~\eqref{eq:weighted-L2-to-L1} gives
		\begin{equation*}
			\Nrm{\Gr_\h(t)}{L^1}\lesssim t^{-3},
			\qquad
			\Nrm{\GrE_\h(t)}{L^1}\lesssim t^{-2}.
		\end{equation*}
		For $0<t\leq1$, retain the factor $\weight{k}^{-4}$ in~\eqref{eq:GrE-symbol-derivatives} instead of discarding it in the scaling estimate. The cases $\n{\alpha}=0$ and $\n{\alpha}=2$ then give
		\begin{equation*}
			\Nrm{\GrE_\h(t)}{L^2} + \Nrm{\n{x}^2\GrE_\h(t)}{L^2}\leq C \, .
		\end{equation*}
		Indeed, $\weight{t k}^2 = 1+t^2\n{k}^2$, and both resulting radial integrals are bounded uniformly for $0<t\leq1$ after the change of variables $r=t\n{k}$ in the term containing $t^2$. Thus \eqref{eq:weighted-L2-to-L1} also gives $\sNrm{\GrE_\h(t)}{L^1}\leq C$ on this interval. Similarly, applying Formula~\eqref{eq:radial-L2-scaling} and Inequality~\eqref{eq:weighted-plancherel} to the two differences of symbols gives
		\begin{align*}
			\Nrm{\Gr_\h(t)-\Gr_0(t)}{L^2} &\lesssim \h^2 \, t^{-13/2},
			&\Nrm{\n{x}^2(\Gr_\h-\Gr_0)(t)}{L^2} &\lesssim \h^2 \, t^{-9/2},
			\\
			\Nrm{\GrE_\h(t)-\GrE_0(t)}{L^2} &\lesssim \h^2 \, t^{-11/2},
			&\Nrm{\n{x}^2(\GrE_\h-\GrE_0)(t)}{L^2} &\lesssim \h^2 \, t^{-7/2}.
		\end{align*}
		A last use of \eqref{eq:weighted-L2-to-L1} gives
		\begin{equation*}
			\Nrm{\Gr_\h(t)-\Gr_0(t)}{L^1} \lesssim \h^2 \, t^{-5},
			\qquad
			\Nrm{\GrE_\h(t)-\GrE_0(t)}{L^1} \lesssim \h^2 \, t^{-4}.
		\end{equation*}
		This completes the proof.
	\end{proof}


\section{Proof of the main theorems}


	\begin{proof}[Proof of Theorem~\ref{thm:Green_functions} and Theorem~\ref{thm:Green_functions_limit}]
		These two theorems are now a direct consequence of the two previous lemmas, together with the formulas~\eqref{eq:inversion2} and~\eqref{eq:hatG_expansion}.
	\end{proof}

	\begin{proof}[Proof of Theorem~\ref{thm:simplified}]
		Formula~\eqref{eq:limit_rho} follows directly from Theorem~\ref{thm:Green_functions} and Theorem~\ref{thm:Green_functions_limit}. The proof of \eqref{eq:decay_E} is exactly the same as in the classical case and we refer to~\cite{bedrossian_linearized_2022} for detailed proofs. More precisely, it implies for $E= E_\h$ and $E=E_0$ that for any $q\in[2,\infty]$,
		\begin{equation*}
			\Nrm{E}{L^q} \lesssim \weight{t}^{-3\lt(\frac{1}{2}-\frac{1}{q}\rt)} .
		\end{equation*}
		We now turn to the study of $E_0 - E_h$, which equals
		\begin{align*}
			E_0 - E_h &= (\GoscE_{0,+} - \GoscE_{h,+}) \star_{t,x} \rho^\circ + (\GoscE_{0,-} - \GoscE_{h,-}) \star_{t,x} \rho^\circ
			\\
			&\quad + (\GrE_0 - \GrE_h) \star_{t,x} \rho^\circ + (\GrE_h + \GoscE_{h,+} +\GoscE_{h,-}) \star_{t,x} (\rho^\circ - \rho^\circ_h)
		\end{align*}
		where $\rho^\circ$ is the solution of the free transport with initial data $f_0^\init$, and $\rho^\circ_h$ the one associated to $f_h^\init$. Changes of variables and Minkowski's inequality give the following standard dispersive estimate for the free transport equation
		\begin{equation*}
			\Nrm{\rho^\circ(\tau,\cdot)}{L^2} \leq \max(1,\tau)^{-3/2}\, \sNrm{f_0^\init}{L^1_xL^2_v\cap L^1_vL^2_x} \, .
		\end{equation*}
		thus
		\begin{align*}
			\sNrm{(\GrE_0 - \GrE_h) \star_{t,x} \rho^\circ}{L^2} 
			\leq \int_0^t \sNrm{\GrE_0(\tau,\cdot) - \GrE_h(\tau,\cdot)}{L^1} 
			\Nrm{\rho^\circ(t - \tau,\cdot)}{L^2} \d\tau
			\\
			\lesssim \sNrm{f_0^\init}{L^1_xL^2_v\cap L^1_vL^2_x} \int_0^t \frac{h^2 \d \tau}{\weight{\tau}^{4-3/8} \tau^{3/8} \weight{t - \tau}^{3/2}}  \lesssim \frac{h^2}{t^{3/8}\weight{t}^{9/8}} .
		\end{align*}
        
		It remains to bound the oscillatory Green functions. We have
		\begin{align*}
			(\GoscE_{0,+} - \GoscE_{h,+}) \star_{t,x} \rho^\circ
			&= \int_0^t \lt( \GoscE_{0,+}(t-\tau,\cdot) - \GoscE_{h,+}(t - \tau,\cdot) \rt) \underset{x}{*} \rho^\circ(\tau,\cdot) \d\tau
			\\
			&= \lt( G^{\textnormal{osc},E,1}_{0,+,1}(t-\tau,\cdot) - G^{\textnormal{osc},E,1}_{h,+}(t - \tau,\cdot) \rt) \underset{x}{*} \rho^\circ(\tau,\cdot)\Bigr|_0^t
			\\
			&\qquad - \int_0^t \lt( \GoscE_{0,+}(t-\tau,\cdot) - G^{\textnormal{osc},E,1}_{h,+}(t - \tau,\cdot) \rt) \underset{x}{*} \partial_\tau \rho^\circ(\tau,\cdot) \d\tau
		\end{align*}
		where $G^{\textnormal{osc},E,1}_{0,\pm}$ through its Fourier transform by
		\begin{equation*}
			\widehat G^{\textnormal{osc},E,1}_{0,\pm} = \frac{\hatGoscE_{0,\pm}}{2 \pi \lambda_\pm(k)}
		\end{equation*}
		so that by definition of $\Gosc$
		\begin{equation*}
			\partial_t G^{\textnormal{osc},E,1}_{0,\pm} = \GoscE_{0,\pm}, 
		\end{equation*}
		and similarly for $G^{\textnormal{osc},E,1}_{h,\pm}$. Using the equation of conservation
		\begin{equation*}
			\partial_t \rho^\circ = - \Dx \cdot m^\circ
		\end{equation*}
		where $m^\circ$ is the first moment of $f_0^\circ$, defined by
		\begin{equation*}
			m^\circ(t,x) = \intd f_0^\circ(t,x,v) \, v \d v \, ,
		\end{equation*}
		and integrating by parts gives
		\begin{align*}
			&\int_0^t \Bigl( G^{\textnormal{osc},E,1}_{0,+}(t-\tau,\cdot) - G^{\textnormal{osc},E,1}_{h,+}(t - \tau,\cdot) \Bigr) \underset{x}{*} \partial_\tau \rho^\circ(\tau,\cdot) \d\tau
			\\
			&= - \int_0^t \Bigl( \Dx G^{\textnormal{osc},E,1}_{0,+}(t-\tau,\cdot) - \Dx G^{\textnormal{osc},E,1}_{h,+}(t - \tau,\cdot) \Bigr) \underset{x}{*} m^\circ(\tau,\cdot) \d\tau \, .
		\end{align*}
		We observe that the symbol of $\Dx G^{\textnormal{osc},E,1}_{0,\pm}$ has the same properties as $\Gosc_{0,\pm}$. As a consequence, the $L^2$ norm of the integral is bounded by
		\begin{equation*}
			\int_0^t h^2 \Nrm{m^\circ(\tau,\cdot)}{L^1} \d\tau \leq \sNrm{\n{v} f_0^\init}{L^1} \, h^2 \, t \, .
		\end{equation*}
		It remains to bound the terms at $0$ and $t$. We have
		\begin{equation*}
			\bigl(G^{\textnormal{osc},E,1}_{0,+} - G^{\textnormal{osc},E,1}_{h,+}\bigr)(t,\cdot) \underset{x}{*} \rho^\circ(0,\cdot)
			= - \bigl(G^{\textnormal{osc},1}_{0,+}(t,\cdot) - G^{\textnormal{osc},1}_{h,+}(t,\cdot)\bigr)(t,\cdot) \underset{x}{*} \nabla \Delta^{-1} \rho^\circ(0,\cdot)
		\end{equation*}
		where $G^{\textnormal{osc},1}_{0,\pm}$ is defined by $\partial_t G^{\textnormal{osc},1}_{0,\pm} = \Gosc_{0,\pm}$ and similarly for $G^{\textnormal{osc},1}_{h,\pm}$. We note that $G^{\textnormal{osc},1}_{0,\pm}$ and $\Gosc_{0,\pm}$ share the same bounds, thus the $L^2$ norm of the previous convolution is bounded by $h^2 \Nrm{\nabla \Delta^{-1} \rho^\circ(0,\cdot)}{L^1}$, namely by $h^2$ provided the electric field associated with the initial data is $L^1$.
		
		To bound
		\begin{equation*}
			\bigl( G^{\textnormal{osc},E,1}_{0,+}(0,\cdot) - G^{\textnormal{osc},E,1}_{h,+}(0,\cdot) \bigr) \underset{x}{*} \rho^\circ(t,\cdot)
			+ \bigl( G^{\textnormal{osc},E,1}_{0,-}(0,\cdot) - G^{\textnormal{osc},E,1}_{h,-}(0,\cdot) \bigr) \underset{x}{*} \rho^\circ(t,\cdot) \, , 
		\end{equation*}
		we observe that we have a particular cancellation
		\begin{align*}
			\hatG^{\textnormal{osc},1}_{0,+}(0,k) + \hatG^{\textnormal{osc},1}_{0,-}(0,k)
			&= \frac{a_+(k)}{\lambda_+(k)} + \frac{a_-(k)}{\lambda_-(k)}
			\\
			&= \frac{{i\over 2}+O(|k|^2)}{i+O(|k|^2)} + \frac{-{i\over 2}+O(|k|^2)}{-i+O(|k|^2)} = 1+O(\n{k}^2) \, ,
		\end{align*}
		thus
		\begin{equation*}
			\widehat G^{\textnormal{osc},1}_{0,+}(0,k) + \widehat G^{\textnormal{osc},1}_{0,-}(0,k) - \widehat G^{\textnormal{osc},1}_{h,+}(0,k) - \widehat G^{\textnormal{osc},1}_{h,-}(0,k) = O(|k|^2 \, h^2) \, .
		\end{equation*}
		Thus this last term is bounded by $h^2$.

		Now that we obtained a bound on the difference between the electric fields, we are ready to estimate the difference between the phase-space densities. We observe that
		\begin{equation*}
			(\dpt + v\cdot\Dx)(f-f_\h) + (E_0 -E_h) \cdot\Dv\mu = E_\h \underset{x}{*} \cF_k\lt( \Dv\mu - D_{h k}\mu\rt)
		\end{equation*}
		and deduce by Gr\"onwall's lemma and H\"older's inequality that
		\begin{equation*}
			\sNrm{f_0(t,\cdot,\cdot)-f_\h(t,\cdot,\cdot)}{L^2} \leq \sNrm{f_0^\init-f_\h^\init}{L^2} + \int_0^t J_1(s) + J_2(s) \d s
		\end{equation*}
		with
		\begin{align*}
			J_1(t) &= \Nrm{E_0(t,\cdot)-E_\h(t,\cdot)}{L^2} \Nrm{\Dv\mu}{L^2} \, ,
			\\
			J_2(t)^2 &= \intdd \n{\widehat{E}_\h(t,k) \cdot\lt(\Dv\mu(v) - D_{h k}\mu(v)\rt)}^2\d k\d v \, .
		\end{align*}
		The term $J_1$ is estimated by $\h^2\weight{t}$, while for $J_2$, the regularity assumptions on $\mu$ imply that 
		\begin{equation*}
			J_2(t)^2 \leq C_\mu\,h^4 \intdd \n{\widehat{E}_\h(t,k)}^2 \n{2\pi k}^4 \d k\d v = C_\mu\,h^4 \Nrm{\nabla\rho_\h(t,\cdot)}{L^2}^2 .
		\end{equation*}
        Similarly as in Lemma~\ref{lem:decay_Gr}, one obtains
        \begin{equation*}
            \Nrm{\Dx \Gr_\h}{L^1} \lesssim \frac{1}{t^{3/8}\weight{t}^{4-3/8}}
        \end{equation*}
        while $\Dx\Gosc_{\h,\pm}$ can be treated similarly as $\Gosc_{\h,\pm}$ 
        and a standard dispersive estimate for the free Schrödinger equation gives
        \begin{equation*}
			\Nrm{\rho^\circ(\tau,\cdot)}{L^2} \lesssim \weight{\tau}^{-3/2} \lt(\Nrm{\weight{x}^{3/2}\op^\init}{\L^2} + \Nrm{\weight{\opp}^{3/2}\op^\init}{\L^2}\rt) ,
		\end{equation*}
        from which one deduces a similar bound for $J_2$ as for $J_1$.
	\end{proof}

\subsubsection*{Acknowledgment}
D. Bian was partially supported by the NSF of China (Grant No. 12271032). Quoc-Hung Nguyen's research was supported by the CAS Project for Young
	Scientists in Basic Research (Grant No.~YSBR-031) and by the NSF of China (Grant Nos.~1251101538 and
	12595282).


\bibliographystyle{abbrv} 
\bibliography{Vlasov}

\end{document}